\newtheorem{thm}{Theorem}
\newtheorem{lem}[thm]{Lemma}
\newtheorem{prop}[thm]{Proposition}
\newtheorem{defn}[thm]{Definition}
\DeclarePairedDelimiter\floor{\lfloor}{\rfloor}
\newcommand{\N}{\mathbb{N}}
\newcommand{\R}{\mathbb{R}}
\newcommand{\ve}{\varepsilon}
\newcommand{\Om}{\Omega}
\DeclareMathOperator{\dist}{dist}
\DeclareMathOperator{\diag}{diag}
\DeclareMathOperator{\diam}{diam}
\DeclareMathOperator{\rank}{rank}
\DeclareMathOperator{\supp}{supp}
\DeclareMathOperator{\sym}{sym}
\DeclareMathOperator{\id}{id}
\begin{document}

\title{Sobolev homeomorphisms with gradients of low rank via laminates}

\author{Daniel Faraco, Carlos Mora-Corral and Marcos Oliva
\\
\footnotesize Department of Mathematics, Faculty of Sciences, Universidad Aut\'onoma de Madrid, E-28049 Madrid, Spain\\
\footnotesize ICMAT CSIC-UAM-UCM-UC3M, E-28049 Madrid, Spain}

\date{\today}
\maketitle

\begin{abstract}
Let $\Omega\subset \R^{n}$ be a bounded open set. Given $2\leq m\leq n$, we construct a convex function $\phi :\Omega\to \R$ whose gradient  $f= \nabla \phi$ is a H\"older continuous homeomorphism, $f$ is the identity on $\partial \Omega$, the derivative $D f$ has rank $m-1$ a.e.\ in $\Omega$ and $D f$ is in the weak $L^{m}$ space $L^{m,w}$. The proof is based on convex integration and staircase laminates.
\end{abstract}

\section{Introduction}\label{sect: introduction}

Let $n \in \N$ and let $\Omega\subset \R^{n}$ be a bounded open set.
This paper deals with homeomorphisms $f:\Omega\to \R^{n}$ in the Sobolev class whose derivative $Df$ has rank less than $m$ at a.e.\ point, for a given $m \leq n$.

In \cite{Hencl11}, Hencl proves that there exists a homeomorphism $f$ in $W^{1,p}\left( (0,1)^{n},(0,1)^{n}\right)$, $1\leq p<n$, such that its Jacobian determinant $J_{f}$ equals zero a.e.
Notice that condition $p<n$ is necessary, since if $f \in W^{1,n}$  and $J_f \geq 0$, then $f$ satisfies the Luzin (N) condition and, consequently, the area formula holds. Therefore, any $f\in W^{1,n}$ with $J_{f}=0$ a.e.\ satisfies 
\[|f(\Om)|\leq \int_{\Om}J_f=0,\]
and, hence, $f$ cannot be a homeomorphism. In fact, in \cite{KaKoMa01} it is proved that if a Sobolev map $f$ is such that
\begin{equation}\label{lim Df n-ve=0}
\lim_{\ve\to 0}\int_{\Omega}| D f|^{n-\ve}=0,
\end{equation}
then $f$ satisfies condition (N), whereas the construction of \cite{Cerny11} elaborates on that of \cite{Hencl11} to show that there exists a homeomorphism $f\in W^{1,1}\left((0,1)^{n},\R^{n}\right)$ such that $J_{f}=0$ almost everywhere and $D f$ is in the grand Lebesgue space $L^{n)}$, i.e.,  
\[\sup_{0<\ve\leq n-1}\ve\int_{(0,1)^{n}}|D f|^{n-\ve}< \infty.\]
Obviously, such $f$ cannot satisfy (\ref{lim Df n-ve=0}).
 
The construction of Hencl \cite{Hencl11} has been further developed in \cite{DoHeSc14,Cerny15} to construct bi-Sobolev homeomorphisms $f$ with $J_f = 0$ almost everywhere and with zero minors of $Df$ almost everywhere.

All those constructions were based on a careful explicit construction and a limit process to obtain a Cantor set where the Jacobian is supported.  

In this work we explore a different way of obtaining such kind of pathological maps by using the staircase laminates invented by Faraco \cite{Faraco03}, in combination with the version of convex integration used by M\"uller and \v{S}ver\'{a}k \cite{MuSv03}. In fact, such laminates have turned out to be useful in a number of apparently unrelated problems such as $L^p$ theory of elliptic equations \cite{AsFaSz08}, Burkholder functions \cite{ BoSzVo13}, Hessians of rank-one convex functions \cite{CoFaMaMu05}, microstructure and phase transitions in solids \cite{Pedregal93, Muller96} and counterexamples of $L^{1}$ estimates \cite{CoFaMa05}. As in the case of Ornstein inequalities \cite{CoFaMa05}, laminates allow one to decouple the construction of pathological maps occurring in various situations into an analytical part and a geometrical part.  The analytical part is taken care
by the general theory of laminates and the version of convex integration based on in-approximations (in fact, in the problem at hand, on a slight evolution of the version for unbounded sets developed in 
\cite{AsFaSz08} which guarantees that the limit map is a homeomorphism). The geometrical part, which is the key in the whole process, consists in finding a suitable staircase laminate. In fact, in \cite{Faraco06} it was sketched how to use laminates to obtain, in dimension $2$, a convex function whose gradient $f$ is in $W^{1,p}$ for all $p<2$, and satisfies $J_{f}=0$ a.e., recovering another interesting example of Alberti and Ambrosio \cite{AlAm99}. 

We mention that gradients of convex functions are interesting in its own right; for example, they are the key ingredient in the polar decomposition of the Brenier map in mass transportation \cite{Brenier91}.  In this regard, our example show some limits of the regularity for transport maps \cite{DeFi14}.

In the current work we show that with staircase laminates it is possible to combine the results of \cite{Faraco06,AlAm99,Hencl11}. In fact, we also recover the result
of \v{C}ern\'{y} \cite{Cerny15}, where he constructs a homeomorphism $f$ with all its minors of $m$-th order equal to zero almost everywhere belonging to $W^{1,p}$ with $1\leq p <\frac{n}{n+1-m}$.

To be precise, we build a probability measure formed by  staircase laminates in the planes parallel to the axes, which can be pushed to show that not only the Jacobian is zero but also that $Df$ has all its minors of order $m$ equal to zero almost everywhere and $Df$ is in $L^{m,w}$, i.e., there exists a constant $C>0$ such that
\[ |\lbrace x\in\Omega: |D f (x)|>t \rbrace|\leq C t^{-m},\qquad t>0.\]
In the particular case when $m=n$ we have $L^{n,w}\subset L^{n)}$, so our result is sharp in the sense explained before. In fact, it seems likely that we could push our construction so that $Df(x)$ is diagonal for all $x$ except in a set of arbitrarily small measure, but we do not pursue this issue here.

This is the theorem that we will prove.
  
  \begin{thm}\label{th1}
Let $\Om\subset \R^n$ be a bounded open set, $m\in \N$, $2\leq m\leq n$, $\delta>0$ and $\alpha\in (0,1)$. Then there exists a convex function $u:\Omega\to \R$ such that its gradient $f=\nabla u$ satisfies:

\begin{enumerate}[i)]
\item $f \in W^{1,1} (\Omega, \R^n)$ and $f:\Omega\to\Omega$ is a homeomorphism.
\item $f=\id$ on $\partial \Om$.
\item $\rank (D f(x))<m$ for a.e.\ $x\in\Om$.
\item $D f\in L^{m,w}\left(\Omega,\R^{n\times n}\right)$.
\item\label{item:theorem5} $\|f-\id\|_{C^{\alpha}(\overline{\Omega})}<\delta$ and $\|f^{-1}-\id\|_{C^{\alpha}(\overline{\Omega})}<\delta$.
\end{enumerate}
\end{thm}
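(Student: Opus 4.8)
\noindent\emph{Overall plan.} The plan is to phrase the statement as a differential inclusion with identity boundary data and to solve it by convex integration, fed by a carefully designed staircase laminate. Since the construction is local it suffices to carry it out on a single open cube $Q\subset\Om$ with $f=\id$ on $\partial Q$ and then exhaust $\Om$ by countably many disjoint cubes, gluing the pieces (each of which is the identity near its cube's boundary); properties (ii)--(v) survive the gluing because they are local, scale invariant, or subadditive in the appropriate sense. Everything is done inside the class of gradients of convex functions — the feature that ultimately forces monotonicity and the homeomorphism property — using convex integration for differential inclusions into \emph{unbounded} sets via in-approximations, in the spirit of \cite{MuSv03} and in the unbounded form of \cite{AsFaSz08}, together with the refinement announced in the Introduction that makes the limit a homeomorphism. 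The inclusion to be solved is
\[
Df(x)\in K:=\bigl\{A\in\R^{n\times n}_{\sym}:\ A\ge 0,\ \rank A\le m-1\bigr\}\ \ \text{a.e.\ in }Q,\qquad f=\id\ \ \text{on }\partial Q .
\]
The elementary move of the iteration replaces a convex potential $u$ by $u+g$, where $g$ is a convex laminate function whose Hessian oscillates — at a spatial scale we are free to choose — between two positive semidefinite matrices differing by a rank-one matrix $a\otimes a$, with prescribed volume fractions. Such $g$ exist (a truncated, rescaled square of a linear form already oscillates between $0$ and $a\otimes a$), the class of gradients of convex functions is preserved, and the $C^0$ size of the map correction $\nabla g$ is at our disposal independently of the size of $D^2g$.

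\medskip
\noindent\emph{The staircase laminate — the main obstacle.} The geometric core, and the hardest part, is to build a probability measure $\nu$ on $\R^{n\times n}_{\sym}$, realized as a weak-$*$ limit of laminates $\nu_k$ of finite order, such that: (a) every $\nu_k$ has barycenter $\id$ and is supported on diagonal positive semidefinite matrices, while $\nu$ is supported in $K$ (so $\nu$-a.e.\ matrix has at least $n-m+1$ vanishing diagonal entries); and (b) there is $C=C(n,m)$ with $\nu_k(\{A:|A|>t\})\le C t^{-m}$ for every $t>0$, uniformly in $k$. The $\nu_k$ are obtained by iterating a basic ``stair'' move along the coordinate rank-one directions $e_i\otimes e_i$: a diagonal matrix is split, within the diagonal positive semidefinite matrices (so that each building block stays convex), so that a fixed proportion of its mass acquires one more zero entry while the rest climbs one rung of a geometric staircase of heights $\lambda_j\uparrow\infty$. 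Running and interleaving these staircases in the $n$ coordinate $2$-planes, with carefully chosen volume fractions, lets one simultaneously drive $n-m+1$ entries to $0$ — which gives (a) and hence property (iii) of the limit map — and keep the tail of each coordinate as thin as (b) permits. The quantitative estimate (b) is the delicate part and is what fixes the exponent: a staircase run in a single direction produces only the decay $t^{-1}$ that is automatic from $\int A_{ii}\,d\nu_k=1$, whereas reaching $t^{-m}$ requires the correlated structure in which the event $\{|A|>t\}$ is realized only after roughly $m$ of the interleaved staircases have each been climbed to comparable height, so that it has probability of order $t^{-m}$. Since (a) forces $\rank Df<m$ a.e.\ and since $L^{m,w}$ is exactly the borderline space ($L^{n,w}\subset L^{n)}$ when $m=n$, beyond which no Sobolev homeomorphism with null Jacobian can exist), the exponent $m$ in (b) — hence (iv) — is optimal.

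\medskip
\noindent\emph{Assembly and conclusion.} With $\nu$ available, choose an in-approximation of $K$ adapted to the staircase: open sets $U_l\downarrow K$, each a neighbourhood of a finite truncation of $\supp\nu$ and compatible with the admissible splittings. Starting from $u_0(x)=\tfrac12|x|^2$, define $u_l=u_{l-1}+g_l$ inductively, where $g_l$ is the convex-laminate correction prescribed by $\nu_k$ and $U_l$, carried out at a spatial scale $\eta_l$ so small that the map correction $w_l:=\nabla g_l=f_l-f_{l-1}$ has $\|w_l\|_{C^0}\le\ve_l$ with $\sum_l\ve_l<\delta$; then, using the interpolation inequality $\|w_l\|_{C^\alpha}\lesssim\|w_l\|_{C^0}^{1-\alpha}\|Dw_l\|_{C^0}^{\alpha}$ — legitimate because $\|Dw_l\|_{C^0}=\|D^2 g_l\|_{C^0}$ is finite and is fixed before $\eta_l$, hence before $\ve_l$ — one also gets $\sum_l\|w_l\|_{C^\alpha}<\delta$, and the identity boundary datum is preserved throughout. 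Passing to the limit: $u$ is convex; $f=\nabla u\in W^{1,1}(Q,\R^n)$ since $L^{m,w}(Q)\subset L^1(Q)$ for $m\ge2$; $f=\id$ on $\partial Q$; $Df(x)\in K$ a.e., which is (iii); the uniform bound (b) is inherited along the iteration and survives in the limit, which is (iv); and $\|f-\id\|_{C^\alpha(\overline Q)}<\delta$, half of (v). Finally, taking each $u_l$ strictly convex makes $f_l=\nabla u_l$ a homeomorphism of $Q$ onto itself, and the homeomorphism version of the unbounded convex integration scheme ensures $f_l\to f$ and $f_l^{-1}\to f^{-1}$ uniformly with uniform Hölder control; this yields the remaining part of (v), the homeomorphism assertion in (i), and (ii) (the latter since the identity boundary datum is preserved at every stage). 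The genuinely delicate point in this last step — the reason the ``slight evolution'' of \cite{AsFaSz08} mentioned in the Introduction is needed — is to keep $\{f_l^{-1}\}$ equicontinuous, with small Hölder norm, while the gradients $Df_l$ are allowed to blow up.
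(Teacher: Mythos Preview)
Your overall architecture---staircase laminate supported in diagonal positive semidefinite matrices, uniform $t^{-m}$ tail on the finite-order truncations, convex integration via in-approximation, and careful control of the inverse---is exactly the paper's plan, and your identification of the exponent $m$ as the sharp one is correct.

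There is, however, a genuine error in your \emph{elementary move}. You describe it as replacing $u$ by $u+g$ with $g$ \emph{convex}, $\nabla g$ small in $C^0$ and equal to zero near the boundary, and $D^2g$ oscillating between two positive semidefinite matrices. But a convex $g$ with $\nabla g$ of zero boundary data (or periodic) has $\int D^2g=0$; since $D^2g\ge 0$, this forces $D^2g\equiv 0$. Your ``truncated, rescaled square of a linear form'' is not convex once you make its gradient periodic or compactly supported. More conceptually: the staircase split sends some diagonal entries \emph{down} to zero, so the new Hessian cannot dominate the old one, and the correction $g=u_{l}-u_{l-1}$ cannot be convex. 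The paper sidesteps this by working at the level of the map $f$ rather than the potential: at each stage one builds a piecewise affine $f_l$ with $Df_l$ in a neighbourhood of the (positive definite) support of the current laminate, and only \emph{after} passing to the limit does one invoke the symmetry and semidefiniteness of $Df$ to write $f=\nabla u$ with $u$ convex (via Poincar\'e's lemma and mollification, since $Df$ is merely $L^1$). So the class ``gradients of convex functions'' is preserved along the iteration, but not by adding convex corrections.

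This also affects how one secures the homeomorphism and the inverse estimate. You lean on strict convexity of $u_l$; the paper instead exploits that every laminate in the iteration is supported in $\{A\in\Gamma_+:\ L^{-1}I\le A\le L I\}$ for some (stage-dependent) $L$, so each basic building block is bi-Lipschitz with an explicit constant, and an elementary computation (their Proposition~\ref{laminate-homeo}) converts $\|f-A\|_{C^\alpha}$ smallness into $\|f^{-1}-A^{-1}\|_{C^\alpha}$ smallness. These bounds are then glued (their Lemma~\ref{glue homeomorphisms}) and summed. Your interpolation $\|w_l\|_{C^\alpha}\lesssim\|w_l\|_{C^0}^{1-\alpha}\|Dw_l\|_{L^\infty}^{\alpha}$ is a legitimate alternative for the forward map, but it does not by itself give equicontinuity of $\{f_l^{-1}\}$, which is precisely the point you flag as delicate; you still need a quantitative injectivity bound at each stage, and the cleanest source for it is the uniform positive definiteness of $Df_l$, i.e.\ the paper's route.
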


Our theorem shows that a H\"older continuous Brenier map (the gradient of a convex function; see, e.g., \cite[p.\ 67]{Villani03} for the definition) may have a rather pathological behaviour. 

As discussed at the beginning, the result is known to be sharp in the sense of integrability in the case $m=n$. Notice that in this case, as explained in \cite{Hencl11}, using the area formula for Sobolev mappings (\cite{Hajlasz93}) we have that this kind of homeomorphisms sends a set of full measure to a null set, and a null set to a set of full measure, i.e., there exists $ Z\subset \Omega$ of measure zero such that
\[ |f\left(\Omega\setminus Z\right)|=\int_{f\left(\Omega\setminus Z\right)} dy=\int_{\Omega\setminus Z} J_{f}(x)\,dx=0\]
 and
 \[ |f\left( Z\right)|=|f\left( \Omega\right)|-|f\left( \Omega\setminus Z\right)|=|f\left( \Omega\right)|.\]

The structure of the paper is as follows.

In Section \ref{sect: sketch of the proof} we introduce the concept of laminate of finite order and  sketch the construction of the laminate that will be central in the proof of Theorem \ref{th1}. Notice that if $m\neq n \neq 2$ the construction is genuinely different from previous staircase laminates as we need to lower the rank accordingly. 

Section \ref{sect: notation} presents the general notation of the paper.

In Section \ref{sect: approximation of laminates by functions} we prove that, given a laminate of finite order, there exists a function $f$ whose derivative is close to the laminate. Moreover, if the laminate is supported in the set of positive definite matrices then $f$ is a homeomorphism.

Section \ref{se:cutting} shows that if we modify a H\"older homeomorphism by cutting and pasting, the map obtained is still a H\"older homeomorphism .

Section \ref{sect: lemmas}, which is the bulk of the paper, constructs a sequence of laminates that converges to the probability measure sketched in Section \ref{sect: sketch of the proof}, as well as a sequence of functions that approximate the laminates.

In Section \ref{sect: proof of the theorem} we prove Theorem \ref{th1}. 

In Section \ref{sect: sharpness} we show the sharpness of our result by proving that there does not exist a H\"{o}lder continuous homeomorphism in $W^{1,m}(\Omega,\R^n)$ such that $f=\id$ on $\partial \Omega$ and $\rank(Df)<m$ a.e.\ in $\Omega$.
Moreover, H\"{o}lder continuity can be dispensed with if $f \in W^{1,p}(\Omega,\R^n)$ for somw $p > m$.

\emph{Note:} When our paper was presented, S. Hencl called our attention to a recent preprint by Liu and Mal\'y \cite{LiMa16},
where a result very similar to our Theorem~\ref{th1} was proved, with a construction related to laminates but not inspired in \cite{Faraco06}. It would be very interesting to see how much these examples have in common. For instance, an understanding of the support of the distributional Jacobian or, in general, the distributional minors is pending.
The advantage of the method presented in this paper is that, once an extremal staircase laminate is found, which is a relatively fast task (Section~\ref{sect: sketch of the proof}), the extremal mapping quite likely will appear, for example by the in-approximation method.  Notice that this last step is not always possible, as shown by the case of monotone maps (the 
staircase laminate from \cite{CoFaMa05} is supported in the range of gradients of planar monotone maps, which are regular by \cite{AlAm99}). 
Finally, we mention that it will also be 
worthy to see the relation with the works \cite{KiKr11,KiKr16}, where the results of \cite{CoFaMa05} are easily  recovered from the study of rank-one convex functions that are one-homogeneous.

\section{Sketch of the proof}\label{sect: sketch of the proof}

The next definition introduces the concept of laminate of finite order \cite{Dacorogna89, Pedregal93, MuSv03, AsFaSz08}. 

\begin{defn}\label{de:laminate}

The family $\mathcal{L}(\R^{n\times n})$ of laminates of finite order is the smallest family of probability measures in $\R^{n\times n}$ with the properties:

\begin{itemize}
\item $\mathcal{L}(\R^{n\times n})$ contains all the Dirac masses.
\item If $\sum_{i=1}^{N}\lambda_i\delta_{A_i}\in \mathcal{L}(\R^{n\times n})$ and $A_N=\lambda B+(1-\lambda)C$, where $\lambda\in [0,1]$ and $\rank \,(B-C)=1$, then the probability measure 
\[\sum_{i=1}^{N-1}\lambda_i\delta_{A_i}+\lambda_N(\lambda \delta_B+(1-\lambda)\delta_C)\]
is also in $\mathcal{L}(\R^{n\times n})$. 
\end{itemize}
\end{defn}

Note that any laminate of finite order is a convex combination of Dirac masses.
Since in this work we will only use laminates of finite order, for simplicity they will be just called \emph{laminates}.

In this section we construct the sequence of laminates $ \nu_{k}$ of finite order that is behind the whole article.
The actual proof will consist in approximating $\nu_{k}$ with laminates of finite order supported in the set of positive definite matrices, then use Proposition \ref{laminate-homeo} to obtain homeomorphisms that are close to the approximate laminates in small regions of the domain, then paste the obtained homeomorphisms to construct a homeomorphism in the whole domain and, finally, a limit passage will yield the homeomorphism $f$ of Theorem \ref{th1}.
The fact that $f$ is the gradient of a convex function is standard since $Df$ was constructed to be symmetric positive semidefinite.

Although this section is not necessary for the proof of Theorem \ref{th1}, it will help the reader to follow the construction of Section \ref{sect: lemmas}.

In order to construct $\nu_k$, we need to define the sets 
\[ S^{k}_{i}=\left\lbrace A=k\diag(v): v\in \lbrace 0,1\rbrace^{n}\text{ and }\rank\left(A\right)=n-i  \right\rbrace, \qquad k\in\N, \quad i\in \{0,\ldots,n-m\}\]
and 
\[E=\{ A\in \R^{n\times n}: \rank (A)<m\}.\]
Thus, the matrices of $S_{i}^{k}$ are $k$ times the identity matrix in the $(n-i)$-dimensional linear subspaces parallel to the axes.

The main property of the laminates to be constructed is as follows: for each $k\in\N$, $\nu_{k}$ is supported in $\bigcup_{i=0}^{n-m}S^{k}_{i}\cup E $, 
\[ |A|\leq k \text{ for all } A\in \supp \left(\nu_{k}\right),\]
and
\[\nu_{k}\left(S^{k}_{i}\right)\leq C k^{i-n},\]
for some $C>0$.

The weak$^*$ limit $\nu$ of $\nu_k$ is supported in the set $E$. It satisfies that there exists a constant $C>0$ such that
\begin{equation}\label{eq:sketch nu}
 \nu\left(\{ |A|>t \}\right)\leq C t^{-m},\qquad t>0.
\end{equation}
This last inequality will give us the desired integrability of the derivative of the homeomorphism.

The laminates $\nu_{k}$ are defined inductively as follows. We start with $\nu_{1}=\delta_{I}$.
Now, given 
\begin{equation}\label{nu_k-1}
\nu_{k-1}=\sum_{j=1}^{N}\lambda_j\delta_{A_j}\in \mathcal{L}(\R^{n\times n}),
\end{equation}
 with $A_{j}\in \bigcup_{i=0}^{n-m}S^{k-1}_{i}\cup E$, all different, we are going to split the matrices of $S_{i}^{k-1}$ in matrices in $\bigcup_{i=0}^{n-m}S^{k}_{i}\cup E $ following rank-one lines as in Definition \ref{de:laminate}.

Let $A\in \supp \left(\nu_{k-1}\right)$. If $A\in E$ we define $\nu_{A}=\delta_{A}$, whereas if $A\in S^{k-1}_{i}$ for some $i\in\{0,\ldots,n-m\}$, we construct $\nu_{A}$ inductively. Without loss of generality, 
\[A=(k-1)\diag(\underbrace{1,\ldots,1}_{n-i},\underbrace{0,\ldots,0}_{i}).\]
We shall construct families
\begin{equation}\label{eq:familiesBl}
 \{B_{\ell,j}\}_{\substack{\ell=0,\ldots,n-i \\ j=0,\ldots, 2^{\ell}-1}} \subset \R^{n\times n} \quad \text{and} \quad \{\lambda_{\ell,j}\}_{\substack{\ell=0,\ldots,n-i \\ j=0,\ldots, 2^{\ell}-1}} \subset [0,1]
\end{equation}
by finite induction on $\ell$.

Let $B_{0,0}=A$, $\lambda_{0,0}=1$ and for $0\leq \ell\leq   n-i-1$,  $0\leq j\leq 2^{\ell}-1$, we assume that $\{B_{\ell,j}\}_{j=0}^{2^{\ell}-1}$ and $\{\lambda_{\ell,j}\}_{j=0}^{2^{\ell}-1}$ have been defined, $B_{\ell,j}$ are diagonal, $\lambda_{\ell,j}\geq 0$,
\begin{equation}\label{eq:l=1,B00}
 \sum_{j=0}^{2^{\ell}-1}\lambda_{\ell,j}=1,\qquad B_{0,0}=\sum_{j=0}^{2^{\ell}-1}\lambda_{\ell,j}B_{\ell,j} ,\end{equation}
\begin{equation}\label{sketch proof Blj es laminado}
\sum_{j=0}^{2^{\ell}-1}\lambda_{\ell,j}\delta_{B_{\ell,j}}\in\mathcal{L}(\R^{n\times n})
\end{equation}
and
\begin{equation}\label{sketch proof valores Blj mayores que l}
\left(B_{\ell,j}\right)_{\alpha,\alpha}=\begin{cases}
k-1 & \text{if } \alpha=\ell+1,\ldots, n-i,\\
0 & \text{if } \alpha=n-i+1,\ldots, n.
\end{cases}
\end{equation}
We also assume that if $B_{\ell,j}\notin E$ then
\begin{equation}\label{sketch proof valores Blj menores que l}
\left(B_{\ell,j}\right)_{\alpha,\alpha}\in \{0,k\}, \qquad \alpha=1,\ldots, \ell,
\end{equation}
\begin{equation}\label{sketch proof valor lambdalj}
\lambda_{\ell,j}= \frac{(k-1)^{\beta_{\ell,j}}}{k^{\ell}},
\end{equation}
where
\[\beta_{\ell,j}:=\#\{\alpha\in\{1,\ldots,\ell\}: \left(B_{\ell,j}\right)_{\alpha,\alpha}=k\}=\rank(B_{\ell,j})-n+i+\ell,\]
and for each $B_{\ell,j'}\notin E$ with $j'\neq j$, we have $B_{\ell,j'}\neq B_{\ell,j}$.

With the above induction hypotheses, we construct $\{B_{\ell+1,j}\}_{j=0}^{2^{\ell+1}-1}$ and $\{\lambda_{\ell+1,j}\}_{j=0}^{2^{\ell+1}-1}$ as follows.
 For any $0\leq j\leq 2^{\ell}-1$, we define
\[ B_{\ell+1,2j}=\begin{cases}
B_{\ell,j}-\diag\left(\underbrace{0,\ldots,0}_{\ell}, k-1,\underbrace{0,\ldots,0}_{n-\ell-1}  \right), & \text{if }  B_{\ell,j}\notin E,\\
B_{\ell,j}, & \text{if } B_{\ell,j}\in E,
\end{cases}\]
\[ B_{\ell+1,2j+1}=\begin{cases}
B_{\ell,j}+\diag\left(\underbrace{0,\ldots,0}_{\ell},1,\underbrace{0,\ldots,0}_{n-\ell-1}  \right), & \text{if } B_{\ell,j}\notin E,\\
B_{\ell,j}, & \text{if } B_{\ell,j}\in E ,
\end{cases}\]
\[ \lambda_{\ell+1,2j}=\lambda_{\ell,j}\frac{1}{k}\quad\text{and}\quad\lambda_{\ell+1,2j+1}=\lambda_{\ell,j}\frac{k-1}{k}.\]
Now we check the induction hypotheses.

We have $B_{\ell,j}=\frac{1}{k}B_{\ell+1,2j}+\frac{k-1}{k}B_{\ell+1,2j+1}$ and $\rank(B_{\ell+1,2j}-B_{\ell+1,2j+1})\leq 1$, so property (\ref{sketch proof Blj es laminado}) holds for $\ell+1$.
Analogously, equalities \eqref{eq:l=1,B00} are easily seen to hold for $\ell+1$ as well.

Now, let $0\leq j\leq 2^{\ell+1}-1$.
We have
\[\left(B_{\ell+1,j}\right)_{\alpha,\alpha}=\left(B_{\ell,\floor{\frac{j}{2}}}\right)_{\alpha,\alpha},\qquad \alpha\neq \ell+1\]
and
\[\left( B_{\ell+1,j}\right)_{\ell+1,\ell+1}=\begin{cases}
0, & \text{if } B_{\ell,\floor{\frac{j}{2}}}\notin E,\ j \text{ even},\\
k, & \text{if } B_{\ell,\floor{\frac{j}{2}}}\notin E,\ j \text{ odd},\\
\left( B_{\ell,\floor{\frac{j}{2}}}\right)_{\ell+1,\ell+1}, & \text{if } B_{\ell,\floor{\frac{j}{2}}}\in E.
\end{cases}\]
Therefore, equality \eqref{sketch proof valores Blj mayores que l} holds for $\ell+1$.

Now fix $\ell,j$ such that $B_{\ell+1,j}\notin E$.
Then $B_{\ell,\floor{\frac{j}{2}}}\notin E$ and, hence, property \eqref{sketch proof valores Blj menores que l} holds for $\ell+1$.
We also have
\[
 \beta_{\ell+1,j} = \begin{cases}
 \beta_{\ell,\frac{j}{2}}, & j \text{ even} , \\
 \beta_{\ell,\frac{j-1}{2}}+1 , & j \text{ odd} ,
 \end{cases}
\]
so \eqref{sketch proof valor lambdalj} holds for $\ell+1$.
Finally, let $j'\neq j$ be such that $B_{\ell+1,j'}\notin E$. If $\floor{\frac{j}{2}}\neq \floor{\frac{j'}{2}}$, then $B_{\ell,\floor{\frac{j'}{2}}}\neq B_{\ell,\floor{\frac{j}{2}}}$, and, hence, 
$B_{\ell+1,j'}\neq B_{\ell+1,j}$, 
 whereas if $\floor{\frac{j}{2}}= \floor{\frac{j'}{2}}$, then $(B_{\ell+1,j'})_{\ell+1,\ell+1}\neq (B_{\ell+1,j})_{\ell+1,\ell+1}$, and, hence,
$B_{\ell+1,j'}\neq B_{\ell+1,j}$.
Here ends the inductive construction of the families \eqref{eq:familiesBl} with the required properties.

Thanks to (\ref{sketch proof valores Blj mayores que l}) and (\ref{sketch proof valores Blj menores que l}) we have, for all $0\leq j\leq 2^{n-i}-1$, 
\begin{equation}\label{sketch proof Bn-ij en Skl union E}
B_{n-i,j}\in \bigcup_{\ell=i}^{n-m}S^{k}_{\ell}\cup E
\end{equation}
whereas (\ref{sketch proof valor lambdalj}) yields
\begin{equation}\label{sketch proof valor lambdan-ij}
\lambda_{n-i,j}=\frac{(k-1)^{\rank\left(B_{n-i,j}\right)}}{k^{n-i}}.
\end{equation}
We define 
\begin{equation}\label{nu_A}
\nu_{A}=\sum_{j=0}^{2^{n-i}-1}\lambda_{n-i,j}\delta_{B_{n-i,j}},
\end{equation}
which is a laminate due to (\ref{sketch proof Blj es laminado}).

 From (\ref{sketch proof Bn-ij en Skl union E}) we get
 \begin{equation}\label{nu_A(Skl)=0}
 \nu_{A}\left(\bigcup_{\ell=0}^{i-1}S^{k}_{\ell}\right)=0,
 \end{equation}
 whereas for $\ell\in\{i,\ldots,n-m\}$, we have, due to (\ref{sketch proof valores Blj mayores que l}) and (\ref{sketch proof valor lambdan-ij})
 \begin{equation}\label{nu(Skl)}
 \nu_{A}\left(S^{k}_{\ell}\right)=\sum_{j:B_{n-i,j}\in S^k_{\ell}}\lambda_{n-i,j} = \sum_{j:B_{n-i,j}\in S^k_{\ell}} \frac{(k-1)^{n-\ell}}{k^{n-i}}\leq \binom{n-i}{n-\ell}\frac{(k-1)^{n-\ell}}{k^{n-i}} ,
 \end{equation}
since the $B_{n-i,j}$ ($0\leq j\leq 2^{n-i}-1$) not in $E$ are all different.
Thus, for each $ j\in \{1,\ldots,N\}$, given $A_j$ appearing in (\ref{nu_k-1}), we have constructed $\nu_{A_{j}}$ as in (\ref{nu_A}) if $A_j\notin E$ and $\nu_{A_j}=\delta_{A_j}$ if $A_j\in E$. So 
\begin{equation}\label{nu A_j}
\nu_{A_{j}}\left(S^k_{\ell}\right)=0 \quad \text{if } A_{j}\in E,\qquad \forall \ell\in\{0,\ldots,n-m\}
\end{equation}
 and we define the probability measure $\nu_{k}:=\sum_{j=1}^{N}\lambda_j\nu_{A_j}$, which is supported in $\bigcup_{i=0}^{n-m}S^{k}_{i}\cup E$ by (\ref{sketch proof Bn-ij en Skl union E}).
 In fact, $\nu_{k}$ is a laminate, but this is not important on the proof.
We observe that we have proved
\begin{equation}\label{sketch proof norma A en nuk}
|A|\leq k \text{ for all } A\in \supp \left(\nu_{k}\right).
\end{equation}
We also let
\begin{equation}\label{C_k}
C_{k}=\prod_{j=1}^{k-1}\left( 1+2^{n}j^{-2}\right)
\end{equation}
and we will prove that for all $k\in \N$ and $i=0,\ldots,n-m$,
\begin{equation}\label{sketch proof cota nuk}
 \nu_{k}\left(S^{k}_{i}\right)\leq C_{k} k^{i-n}.
\end{equation}
We proceed by induction. The inequality for $k=1$ is immediate since $\nu_1=\delta_I$. Let $k\geq 2$ and suppose that for $i=0,\ldots,n-m$, inequality \eqref{sketch proof cota nuk} holds for $k-1$.
Since the $A_j$ of (\ref{nu_k-1}) are different, we have that
$\nu_{k-1}(A_j)=\lambda_j$.
Now, for all $\ell\in\{0,\ldots,n-m\}$, we use (\ref{nu_A(Skl)=0}) and (\ref{nu A_j}) to get
\begin{align*}
\nu_{k}\left( S^{k}_{\ell}\right)&=\sum_{j=1}^{N}\nu_{k-1}\left( A_{j}\right)\nu_{A_{j}}\left(S^{k}_{\ell}\right)= \left[ \sum_{j:A_{j}\in E} + \sum_{i=0}^{n-m}\sum_{j:A_{j}\in S^{k-1}_{i}} \right] \nu_{k-1}\left( A_{j}\right)\nu_{A_{j}}\left(S^{k}_{\ell}\right) \\
&= \sum_{i=0}^{\ell}\sum_{j:A_{j}\in S^{k-1}_{i}}\nu_{k-1}\left( A_{j}\right)\nu_{A_{j}}\left(S^{k}_{\ell}\right).
\end{align*}
We use that \eqref{sketch proof cota nuk} is valid for $k-1$ to get that for all $i\in \{0,\ldots,n-m\}$,
\[
\frac{(k-1)^{n-\ell}}{k^{n-i}} \sum_{j:A_{j}\in S^{k-1}_{i}}\nu_{k-1}\left( A_{j}\right) = \frac{(k-1)^{n-\ell}}{k^{n-i}}\nu_{k-1}\left(S^{k-1}_{i}\right) \leq C_{k-1} \frac{(k-1)^{i-\ell}}{k^{n-i}}.
\]
In addition,
\[\sum_{i=0}^{\ell}\binom{n-i}{n-\ell}\left((k-1)k\right)^{i-\ell}\leq 1+\sum_{i=0}^{\ell-1}\binom{n-i}{n-\ell}\left(k-1\right)^{2(i-\ell)}\leq  1+2^{n}(k-1)^{-2},\]
where we have used the crude inequality $\sum_{i=0}^{\ell-1}\binom{n-i}{n-\ell} \leq 2^n$.
We combine the last three inequalities and (\ref{nu(Skl)}) to get
\begin{align*}
\nu_{k}\left( S^{k}_{\ell}\right) & = \sum_{i=0}^{\ell}\sum_{j:A_{j}\in S^{k-1}_{i}}\nu_{k-1}\left( A_{j}\right)\nu_{A_{j}}\left(S^{k}_{\ell}\right)\leq \sum_{i=0}^{\ell}\sum_{j:A_{j}\in S^{k-1}_{i}}\nu_{k-1}\left( A_{j}\right)\binom{n-i}{n-\ell}\frac{(k-1)^{n-\ell}}{k^{n-i}}\\
&\leq \sum_{i=0}^{\ell}\binom{n-i}{n-\ell}C_{k-1} \frac{(k-1)^{i-\ell}}{k^{n-i}}\leq k^{\ell-n}C_{k-1}\left( 1+2^{n}(k-1)^{-2}\right)=C_{k}k^{\ell-n} ,
\end{align*}
which proves \eqref{sketch proof cota nuk}.
We observe from (\ref{C_k}) that $\lim_{k\to\infty}C_{k}<\infty$. Consequently,
\begin{equation}\label{sketch proof nukSk}
\nu_{k}\left(\bigcup_{i=0}^{n-m}S^{k}_{i}\right)\leq \sum_{i=0}^{n-m}C_{k}k^{i-n}\leq Ck^{-m}
\end{equation}
for some $C>0$.

Next, we will prove by induction that exists $\tilde{C}>0$ such that for all $k\in\N$,
\begin{equation}\label{sketch proof integrabilidad nuk}
\nu_{k}\left(\{ A \in \R^{n \times n} : |A|>t \}\right)\leq \tilde{C}t^{-m},\qquad t>0.
\end{equation}
For simplicity of notation, the set $\{ A \in \R^{n \times n} : |A|>t \}$ will be abbreviated as $\{ |A|>t \}$.
When $k=1$, we have $\nu_1=\delta_I$, hence, (\ref{sketch proof integrabilidad nuk}) is obvious. Now, we divide the inductive step in three cases, according to the values of $t$.

If $t \geq k+1$, we use (\ref{sketch proof norma A en nuk}) to obtain that $\nu_{k+1}\left(\{ |A|>t \}\right)=0$.

In the case $t< k$, we first show that if $|A|>t$, $A_j\in \supp (\nu_k)$ and $\nu_{A_j}(A)>0$, then $|A_j|>t$.
Indeed, if $A_j\in E$, then $\nu_{A_j}=\delta_{A_j}$, so, as $\nu_{A_j}(A)>0$, we have $A=A_j$, and therefore $|A_j|>t$; whereas if $A_j\in \bigcup_{i=0}^{n-m}S_{i}^{k}$, then $|A_j|=k>t$.
Therefore, if $|A_j|\leq t$, then 
$\nu_{A_j}\left(\{ |A|>t \}\right)=0$,
and, hence,
\begin{align*}
\nu_{k+1} \left(\{ |A|>t \}\right) & = \sum_{\substack{j:A_j\in \supp (\nu_k) \\ |A_j|>t}}\nu_{k}(A_j)\nu_{A_j}\left(\{ |A|>t \}\right)\\
&\leq\nu_{k}\left(\left\lbrace A_j\in \supp (\nu_k): |A_j|>t\right\rbrace\right)\leq \nu_{k}\left(\{ |A|>t \}\right)\leq \tilde{C}t^{-m}.
\end{align*}

Finally, in the case $k\leq t < k+1$, we use that if $A\in\supp(\nu_{k+1})$ and $|A|>t$, then, by (\ref{sketch proof norma A en nuk}), we have that for all $A_j \in\supp(\nu_{k})\cap E$, equalities $\nu_{A_j}(A)=\delta_{A_j}(A)=0$ are satisfied. Hence, thanks to \eqref{sketch proof nukSk},
\[
 \nu_{k+1} \left(\{ |A|>t \}\right) = \sum_{\substack{j:A_j\in \supp (\nu_k) \\ A_j \notin E}} \nu_{k}(A_j)\nu_{A_j}\left(\{ |A|>t \}\right) \leq \nu_{k}\left(\bigcup_{i=0}^{n-m}S^{k}_{i}\right)\leq Ck^{-m}\leq \tilde{C}t^{-m}.
\]
This finishes the proof of \eqref{sketch proof integrabilidad nuk}.

Let $\nu$ be the weak$^*$ limit of $\nu_k$ as $k\to\infty$.
Thanks to \eqref{sketch proof nukSk}, $\nu$ is supported in $E$ and, by (\ref{sketch proof integrabilidad nuk}), inequality \eqref{eq:sketch nu} holds.

We will try to illustrate this construction with some pictures.

In the simplest case $n=m=2$, $E$ is the set of matrices with zero determinant, $\nu_{1}=\delta_{I}$ and the first steps of the construction are depicted in Figures \ref{n=2 first split} and \ref{n=2 second split}.
In the first step (Figure \ref{n=2 first split}) we split $B_{0,0}=I$ into $B_{1,0}=\diag(0,1)$ and $B_{1,1}=\diag\left(2,1\right)$.
\begin{figure}[h]
\begin{picture}(180,110)(0,100)

\put(50,100){\vector(0,1){100}}
\put(50,100){\vector(1,0){100}}

\dashline{10}(50,100)(150,200)

\put(80,130){\circle{10}}\put(80,145){\makebox(0,0){$B_{0,0}$}}
\put(50,130){\circle*{10}}
\put(35,120){\makebox(0,0){$B_{1,0}$}}
\put(110,130){\circle*{10}}
\put(125,120){\makebox(0,0){$B_{1,1}$}}
\put(50,130){\line(1,0){60}}
\end{picture}
\caption{\label{n=2 first split} First split, $n=m=2$, $B_{0,0}=I$: $B_{1,0}=\diag(0,1)$, $B_{1,1}=\diag\left(2,1\right)$.}
\end{figure}
As $B_{1,0}\in E$, the second step (Figure \ref{n=2 second split}) consists in splitting $B_{1,1}$ into $B_{2,2}=\diag\left(2,0\right)$ and $B_{2,3}=\diag\left(2,2\right)$.
\begin{figure}[h]
\begin{picture} (180,200)(250,70)
 
\put(300,100){\vector(0,1){100}}
\put(300,100){\vector(1,0){100}}

\dashline{10}(300,100)(400,200)

\put(330,130){\circle{10}}
\put(300,130){\circle*{10}}
\put(275,120){\makebox(0,0){$B_{1,0}$}}
\put(360,130){\circle{10}}
\put(300,130){\line(1,0){60}}

\put(360,100){\circle*{10}}
\put(375,90){\makebox(0,0){$B_{2,2}$}}
\put(360,160){\circle*{10}}
\put(380,155){\makebox(0,0){$B_{2,3}$}}
\put(380,125){\makebox(0,0){$B_{1,1}$}}
\put(360,100){\line(0,1){60}}
\end{picture}
\caption{\label{n=2 second split}Second split, $n=m=2$: $B_{2,0}=B_{2,1}=B_{1,0}=\diag\left(0,1\right)$, $B_{2,2}=\diag\left(2,0\right)$, $B_{2,3}=\diag\left(2,2\right)$.}
\end{figure}
After the second split, we obtain
\[\nu_{2}=\nu_{I}=\sum_{j=0}^{3}\lambda_{2,j}\delta_{B_{2,j}}=\frac{1}{2}\delta_{\diag\left(0,1\right)}+\frac{1}{4}\delta_{\diag \left(2,0\right)}+\frac{1}{4}\delta_{\diag\left(2,2\right)}.\]

In the case $n=3,m=2$, $E$ is the set of matrices of rank less than $2$. In order to exemplify the passage from step $k-1$ to step $k$, if we start with a matrix in $S^{k-1}_{1}$, the construction is the same as in the case $n=m=2$, whereas if we start with $A\in S_{0}^{k-1}$, we have $A=(k-1)I$ and Figures \ref{n=3 first split}, \ref{n=3 second split} and \ref{n=3 third split} show the construction of  $\nu_{A}$. 
\begin{figure}[h]
\begin{picture}(180,180)(-150,-80)
\put(0,0){\vector(0,1){100}}
\put(0,0){\vector(1,0){100}}
\put(0,0){\vector(-1,-1){70}}

\put(50,40){\circle{5}}\put(55,48){\makebox(0,0){$B_{0,0}$}}

\put(-60,40){\line(1,0){140}}
\put(-60,40){\circle*{5}}\put(-50,30){\makebox(0,0){$B_{1,0}$}}
\put(80,40){\circle*{5}}\put(90,30){\makebox(0,0){$B_{1,1}$}}
\end{picture}
\caption{\label{n=3 first split}First split, $n=3$, $m=2$: $B_{0,0}=(k-1)I$, $B_{1,0}=\diag\left(k-1,0,k-1\right)$, $B_{1,1}=\diag\left(k-1,k,k-1\right)$.}
\end{figure}
\begin{figure}[h]
\begin{picture}(180,180)(-150,-100)
\put(0,0){\vector(0,1){150}}
\put(0,0){\vector(1,0){180}}
\put(0,0){\vector(-1,-1){100}}


\put(50,40){\circle{5}}

\put(-60,40){\line(1,0){140}}
\put(-60,40){\circle{5}}\put(-85,45){\makebox(0,0)[l]{$B_{1,0}$}}
\put(80,40){\circle{5}}\put(80,30){\makebox(0,0)[l]{$B_{1,1}$}}

\put(0,100){\line(-1,-1){80}}
\put(0,100){\circle*{5}}\put(5,105){\makebox(0,0)[l]{$B_{2,0}$}}
\put(-80,20){\circle*{5}}\put(-90,10){\makebox(0,0)[l]{$B_{2,1}$}}

\put(140,100){\line(-1,-1){80}}
\put(140,100){\circle*{5}}\put(140,105){\makebox(0,0)[l]{$B_{2,2}$}}
\put(60,20){\circle*{5}}\put(60,10){\makebox(0,0)[l]{$B_{2,3}$}}
\end{picture}
\caption{\label{n=3 second split}Second split, $n=3$, $m=2$: $B_{2,0}=\diag\left(0,0,k-1\right)$, $B_{2,1}=\diag\left(k,0,k-1\right)$, $B_{2,2}=\diag\left(0,k,k-1\right)$, $B_{2,3}=\diag\left(k,k,k-1\right)$.}
\end{figure}
\begin{figure}[h]
\begin{picture}(180,200)(-150,-100)
\put(0,0){\vector(0,1){150}}
\put(0,0){\vector(1,0){180}}
\put(0,0){\vector(-1,-1){100}}


\put(50,40){\circle{5}}

\put(-60,40){\line(1,0){140}}
\put(-60,40){\circle{5}}
\put(80,40){\circle{5}}

\put(0,100){\line(-1,-1){80}}
\put(0,100){\circle*{5}}\put(5,105){\makebox(0,0)[l]{$B_{2,0}$}}
\put(-80,20){\circle{5}}\put(-100,15){\makebox(0,0)[l]{$B_{2,1}$}}

\put(140,100){\line(-1,-1){80}}
\put(140,100){\circle{5}}\put(145,95){\makebox(0,0)[l]{$B_{2,2}$}}
\put(60,20){\circle{5}}\put(65,10){\makebox(0,0)[l]{$B_{2,3}$}}

\put(-80,-80){\line(0,1){130}}
\put(-80,-80){\circle*{5}}\put(-75,-85){\makebox(0,0)[l]{$B_{3,2}$}}
\put(-80,50){\circle*{5}}\put(-95,60){\makebox(0,0)[l]{$B_{3,3}$}}

\put(140,0){\line(0,1){130}}
\put(140,0){\circle*{5}}\put(145,-10){\makebox(0,0)[l]{$B_{3,4}$}}
\put(140,130){\circle*{5}}\put(145,125){\makebox(0,0)[l]{$B_{3,5}$}}

\put(60,-80){\line(0,1){130}}
\put(60,-80){\circle*{5}}\put(65,-85){\makebox(0,0)[l]{$B_{3,6}$}}
\put(60,50){\circle*{5}}\put(60,60){\makebox(0,0)[l]{$B_{3,7}$}}

\end{picture}
\caption{\label{n=3 third split}Third split, $n=3$, $m=2$: $B_{3,0}=B_{3,1}=B_{2,0}=\diag\left(0,0,k-1\right)$, $B_{3,2}=\diag\left(k,0,0\right)$, $B_{3,3}=\diag\left(k,0,k\right)$, $B_{3,4}=\diag\left(0,k,0\right)$, $B_{3,5}=\diag\left(0,k,k\right)$, $B_{3,6}=\diag\left(k,k,0\right)$, $B_{3,7}=kI$.}
\end{figure}

We get at the end
\begin{align*}
\nu_{A}&=\sum_{j=0}^{7}\lambda_{3,j}\delta_{B_{3,j}}=\frac{1}{k^{2}}\delta_{\diag\left(0,0,k-1\right)}+\frac{(k-1)^{2}}{k^{3}}\delta_{\diag\left(k,0,k\right)}+\frac{k-1}{k^{3}}\delta_{\diag\left(k,0,0\right)}\\
&+\frac{(k-1)^{2}}{k^{3}}\delta_{\diag\left(0,k,k\right)}+\frac{k-1}{k^{3}}\delta_{\diag\left(0,k,0\right)}+\frac{(k-1)^{2}}{k^{3}}\delta_{\diag\left(k,k,0\right)}+\frac{(k-1)^{3}}{k^{3}}\delta_{kI}.
\end{align*}
The construction of $\nu_{k}$ would entail the analogous construction for each $A\in S_{0}^{k-1}\cup S_{1}^{k-1}$.

\section{General notation}\label{sect: notation}

We explain the general notation used throughout the paper, most of which is standard.

In the whole paper, $\Om$ is an open, non-empty bounded set of $\R^n$.

We denote by $\R^{n\times n}$ the set of $n\times n$ matrices, by $\Gamma_{+}$ its subset of symmetric positive semidefinite matrices, and by $SO(n) \subset \R^{n\times n}$ the orthogonal matrices with determinant $1$.

Given $A_i \in \R^{n\times n}$, the measure $\delta_{A_i}$ is the Dirac delta at $A_i$.
The barycenter of the probability measure $\nu=\sum_{i=1}^{N}\alpha_{i}\delta_{A_{i}}$ is
$\overline{\nu}=\sum_{i=1}^{N}\alpha_{i}A_{i}$. 

Given $A\in \R^{n\times n}$, let $\sigma_1(A)\leq\cdots\leq\sigma_n (A)$ denote its singular values. If the matrix $A$ is clear from the context, we will just indicate their singular values as $\sigma_1,\ldots,\sigma_n.$
In fact, in this paper we will always deal with $A \in \Gamma_+$, so its eigenvalues coincide with its singular values.
Its components are written $A_{\alpha, \beta}$ for $\alpha , \beta \in \{ 1, \ldots, n \}$.
Its operator norm is denoted by $|A|$, which coincides with $\sigma_n (A)$.
The norm of a $v \in \R^n$ is also denoted by $|v|$.

Given $a_1, \ldots, a_n \in \R$ the matrix $\diag (a_1, \ldots, a_n) \in \R^{n\times n}$ is the diagonal matrix with diagonal entries $a_1, \ldots, a_n$.

We will use the symbol $\lesssim$ when there exists a constant depending only on $n$ and $m$ such that the left hand side is less than or equal to the constant times the right hand side.

Given a set $E \subset \R^n$, we denote its characteristic function by $\chi_E$.
We write $\# E$ for the number of elements of $E$.
Its Lebesgue measure is denoted by $|E|$.

Given $a \in \R$, its integer part is denoted by $\floor{a}$.

Given $E \subset \R^n$, $\alpha \in (0,1]$ and a function $f : E \to \R^n$, we denote the H\"older seminorm, supremum norm and H\"older norm, respectively, as
\begin{align*}
 & \left| f \right|_{C^{\alpha} (E)} := \sup_{\substack{x_1, x_2 \in E \\ x_1 \neq x_2}} \frac{\left| f(x_2) - f(x_1)\right|}{\left| x_2 - x_1 \right|^{\alpha}} , \qquad \left\| f \right\|_{L^{\infty} (E)} = \sup_{x \in E} \left| f(x) \right| , \\
 & \left\| f \right\|_{C^{\alpha} (E)} :=  \left| f \right|_{C^{\alpha} (E)} + \left\| f \right\|_{L^{\infty} (E)} .
\end{align*}
We will write $f \in C^{\alpha} (E, \R^n)$ when $\left\| f \right\|_{C^{\alpha} (E)} < \infty$. 
Note that, if $f$ is continuous, the above norms and seminorms in $E$ coincide with those in $\overline{E}$.
In particular, we will identify $C^{\alpha} (E, \R^n)$ with $C^{\alpha} (\overline{E}, \R^n)$, the set of H\"older functions of exponent $\alpha$.
Of course, if $\alpha=1$, they are Lipschitz.
A homeomorphism $f$ is bi-H\"older if both $f$ and $f^{-1}$ are H\"older.

The identity function is denoted by $\id$.

We will say that a continuous mapping $f:\overline{\Omega}\to \R^n$ is \emph{piecewise affine} if there exists a countable family $\{\Omega_i\}_{i\in \N}$ of pairwise disjoint open subsets of $\Omega$ such that $f|_{\Omega_i}$ is affine for all $i\in\N$, and 
\[\left|\Omega\setminus \bigcup_{i\in \N}\Omega_i\right|=0.\]

\section{Approximation of laminates by functions}\label{sect: approximation of laminates by functions}

The following result, based on Astala, Faraco and Sz\'{e}kelyhidi \cite[Prop.\,2.3]{AsFaSz08}, shows how the gradient of a function can  approximate a laminate.
Given $A_1,A_2\in \Gamma_{+}$ we write $A_1\geq A_2$ when $A_1-A_2\in \Gamma_{+}$.
Note that, when $A \in \R^{n \times n}$, the same symbol $A$ is used to indicate a matrix (as in \emph{(\ref{gradient f})} below) and a linear function (as in \emph{(\ref{alpha f})}).

\begin{prop}\label{laminate-homeo}

Let $N\in \N$, $A_1,\ldots, A_N\in \Gamma_+$ and $L\geq 1$ be such that
\[A_i\geq L^{-1} I, \qquad |A_i|\leq L , \qquad i=1,\ldots, N.\]
Consider $\alpha_1, \ldots, \alpha_N \geq 0$ such that $\nu := \sum_{i=1}^{N}\alpha_{i}\delta_{A_{i}}$ is in $\mathcal{L}(\R^{n\times n})$ and call $A:=\overline{\nu}$.
Then, for every $\alpha\in (0,1)$, $0<\delta<\frac{1}{2}\min\{L^{-1},\min_{1\leq i<j\leq N}|A_{i}-A_{j}|\}$ and every bounded open set $\Omega\subset\R^{n}$, there exists a piecewise affine bi-Lipschitz homeomorphism $f:\Omega\to A\Om$ such that

\begin{enumerate}[(a)]
\item\label{ f is gradient} $f=\nabla u$ for some $u\in W^{2,\infty}(\Omega)$,
\item\label{frontera} $f(x)=Ax$ for $x\in\partial\Omega$,
\item\label{alpha f} $\|f-A\|_{C^{\alpha}(\overline{\Omega})}<\delta$,
\item\label{alpha f inv} $\|f^{-1}-A^{-1}\|_{C^{\alpha}(A\overline{\Omega})}<\delta$,
\item\label{gradient f} $|\{x\in\Omega: | D f(x)-A_{i}|<\delta \}|=\alpha_{i}|\Omega|$ for all $i=1,\ldots,N.$
\end{enumerate} 
\end{prop}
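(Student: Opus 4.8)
The plan is to prove Proposition~\ref{laminate-homeo} by induction on the number of splittings used to produce the laminate $\nu$ from the Dirac mass $\delta_A$, exactly mirroring the recursive structure in Definition~\ref{de:laminate}. The base case is $\nu = \delta_A$, where we simply take $f(x) = Ax$; properties (\ref{ f is gradient})--(\ref{gradient f}) are then trivial (choosing $u(x) = \tfrac12 \langle Ax, x\rangle$). For the inductive step, suppose $\nu = \sum_{i=1}^{N}\alpha_i \delta_{A_i}$ is obtained from a shorter laminate $\tilde\nu = \sum_{i=1}^{N-1}\alpha_i\delta_{A_i} + \alpha_N \delta_{A_N'}$ where $A_N' = \lambda B + (1-\lambda)C$ with $A_N' \in \{A_i\}$ not necessarily — more carefully, one splits one atom $A_N'$ of $\tilde\nu$ into $\lambda \delta_B + (1-\lambda)\delta_C$ with $\rank(B-C) = 1$, $B = A_{N-1}$ (say) region — the point is there is a ``parent'' atom $A_N'$ with weight $\alpha'$ that gets replaced by two atoms whose weights sum to $\alpha'$.

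The key geometric tool is the elementary building block for a single rank-one splitting: given $A' = \lambda B + (1-\lambda) C$ with $\rank(B-C) = 1$, both $B, C \in \Gamma_+$ with $B,C \geq L^{-1}I$ and $|B|,|C| \leq L$, one can construct on any bounded open set a piecewise affine bi-Lipschitz $g = \nabla v$ with $g = A'x$ on the boundary, $Dg \in \{B, C\}$ a.e., with the measure of $\{Dg = B\}$ equal to $\lambda|\Omega|$, and with $g$ as $C^\alpha$-close to $A'$ and $g^{-1}$ as close to $(A')^{-1}$ as we like. This is the standard ``simple laminate'' construction using thin slabs perpendicular to the rank-one direction (the laminar construction of \cite{MuSv03, AsFaSz08}); the fact that the slabs can be taken arbitrarily thin gives the $C^\alpha$ and $C^0$ smallness, and symmetry/positive-definiteness of $B, C$ together with the fact that $A'$ is their average keeps everything inside $\Gamma_+$, so $g$ is a gradient and bi-Lipschitz with controlled constants. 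To get the inductive step: first apply the inductive hypothesis to $\tilde\nu$ to get $\tilde f : \Omega \to A\Omega$ with $D\tilde f$ concentrated near the atoms of $\tilde\nu$; then, in the open set $U := \{x : |D\tilde f(x) - A_N'| < \delta'\}$ where $\tilde f$ is affine with gradient exactly $A_N'$ on each component (by piecewise affineness we may arrange $D\tilde f = A_N'$ identically on an open set of the right measure $\alpha_N|\Omega|$), replace $\tilde f$ on each affine piece by the composition with the simple-laminate map $g$ rescaled to that piece. Gluing is continuous since $g$ agrees with $A_N' x$ on the boundary of each piece.

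The main obstacle — and the place where real care is needed — is propagating the quantitative bounds through the induction: the $C^\alpha$ seminorm does not add up as nicely as the $C^0$ norm under composition and gluing, so one must choose the perturbation parameter at each splitting stage small enough (depending on the Lipschitz/bi-Lipschitz constants accumulated so far, which in turn are controlled by $L$) so that the total error stays below the prescribed $\delta$. Concretely: if $\tilde f$ is within $\eta$ of $A$ in $C^\alpha$ and one perturbs it on $U$ by a map that is within $\eta'$ of $A_N'$ in $C^\alpha$, then near the modified region the new map is within roughly $\mathrm{const}(L)\cdot(\eta + \eta')$ of $A$ in $C^\alpha$ — one needs the interior estimate that the perturbation, being localized and boundary-matching on each affine piece, contributes a controlled amount to the global seminorm (here the fact from Section~\ref{se:cutting} that cutting-and-pasting a H\"older homeomorphism yields a H\"older homeomorphism with controlled norm is the relevant black box, though in this proposition everything is even piecewise affine). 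The inverse estimate (\ref{alpha f inv}) is handled symmetrically, using that bi-Lipschitz bounds on $f$ translate $C^\alpha$-control of $f - A$ into $C^\alpha$-control of $f^{-1} - A^{-1}$. Finally (\ref{gradient f}) is bookkeeping: each atom $A_i$ of $\nu$ receives exactly the measure $\alpha_i|\Omega|$ because the simple-laminate splitting subdivides the parent region in the proportions $\lambda : (1-\lambda)$ matching the new weights, and the condition $\delta < \tfrac12 \min_{i<j}|A_i - A_j|$ guarantees the sets $\{|Df - A_i| < \delta\}$ are disjoint so no region is double-counted.
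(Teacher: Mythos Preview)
Your inductive approach is essentially a reconstruction of \cite[Prop.\ 2.3]{AsFaSz08}, which the paper simply \emph{cites} for parts (\ref{ f is gradient}), (\ref{frontera}), (\ref{alpha f}) and (\ref{gradient f}). So for those parts your sketch is correct in spirit, though it is doing strictly more work than the paper: the paper treats those conclusions as a black box and only supplies a proof of the new part (\ref{alpha f inv}).

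Where your proposal is genuinely incomplete is precisely part (\ref{alpha f inv}). You write that it is ``handled symmetrically, using that bi-Lipschitz bounds on $f$ translate $C^\alpha$-control of $f-A$ into $C^\alpha$-control of $f^{-1}-A^{-1}$'', but you do not say how the bi-Lipschitz bound is obtained. This is the actual content of the paper's proof: from (\ref{gradient f}) and $\delta<\tfrac{1}{2}L^{-1}$ one has $Df(x)\geq \tfrac{1}{2L}I$ a.e., and then a mollification-plus-monotonicity argument (extend $f$ by $Ax$ to a ball, mollify to $f_\ve$, use $\langle Df_\ve h,h\rangle\geq \tfrac{1}{2L}|h|^2$ along segments, pass to the limit) gives $|f(y)-f(x)|\geq \tfrac{1}{2L}|x-y|$. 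Only then does the calculation
\[
\frac{|f^{-1}(y_1)-A^{-1}y_1-f^{-1}(y_2)+A^{-1}y_2|}{|y_1-y_2|^\alpha}\leq K^\alpha |A^{-1}|\,|f-A|_{C^\alpha(\overline\Omega)}
\]
go through. Your sketch also hides a subtlety: in the inductive splitting, the \emph{intermediate} atoms $A_N'$ need not satisfy $A_N'\geq L^{-1}I$ or $|A_N'|\leq L$, so the bi-Lipschitz constants are not ``controlled by $L$'' at every stage of the induction. The paper sidesteps this by proving bi-Lipschitzness only for the final map, using only property (\ref{gradient f}) of the final $f$; if you want to run (\ref{alpha f inv}) inductively you would need a separate argument.
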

\begin{proof}

Parts \emph{(\ref{ f is gradient})}, \emph{(\ref{frontera})}, \emph{(\ref{alpha f})} and \emph{(\ref{gradient f})} are proved in \cite[Prop.\ 2.3]{AsFaSz08}. To prove \emph{(\ref{alpha f inv})} we first show that $f$ is bi-Lipschitz. We extend $f$ to an open ball $\Om'$ such that $\overline{\Om}\subset \Om'$ and $f(x)=Ax$ in $\Om'\backslash\Om$. Thus, $f$ is continuous in $\Om'$. We define, for each $\ve>0$,
\[\Om'_\ve=\{x\in\Omega': \dist(x,\partial \Omega')>\ve\}.\]
By \emph{(\ref{gradient f})} we get that $D f(x)\geq \frac{1}{2L} I$, and $|D f(x)|\leq 2L$ a.e.\ in $\Om'$. As $\Omega'$ is convex then $f$ is $2L$-Lipschitz. Let $\{\eta_{\ve}\}_{0<\ve\leq 1}$ be a standard family of mollifiers, and $f_\ve:=\eta_\ve *f\in C^{\infty}(\Om'_\ve)$ the mollification of $f$. Using that the matrices $M\in\R^{n\times n}_{\sym}$ satisfying $(2L)^{-1}I\leq M$ form a convex set, we find that there exists an $\ve_0>0$ such that if $\ve\le\ve_0$ then $D f_\ve(x)\geq \frac{1}{2L} I$ in $\Om'_\ve\supset\overline{\Omega}$.

For each $x,y\in\Omega'_{\ve}$, calling $h=y-x$, we have that
\begin{align*}
|f_\ve(y)-f_\ve(x)||h|&\geq |\langle f_\ve(y)-f_\ve(x),h\rangle|=\left|\left\langle\int_{0}^{1} D f_\ve(x+th) h\,dt,h\right\rangle\right|\\
&\geq\int_{0}^{1} \langle D f_\ve(x+th) h,h\rangle\,dt\geq \int_{0}^{1}\frac{1}{2L}|h|^2 \,dt= \frac{1}{2L}|h|^2,
\end{align*}
that is,
\[ |f_\ve(y)-f_\ve(x)|\geq\frac{1}{2L}|x-y|.\]
Using that $f_\ve\to f$ uniformly in $\overline{\Om}$ as $\ve\to 0$, we get
\[\frac{1}{2L}|x-y|\leq |f(y)-f(x)|.\]
 Hence $f$ is $K$-bi-Lipschitz in $\overline{\Om}$ with $K=2L$. Therefore, $f$ is a homeomorphism onto its image. The equalities $f(\Omega)=A\Omega$ and $f(\overline{\Om})=A\overline{\Omega}$ follows from standard results using the topological degree (e.g., \cite[Thms.\ 1 and 2]{Ball81}).
 
 Now we will estimate the $C^{\alpha}$ seminorm of $f^{-1}-A^{-1}$. For each $ y_1,y_2\in f(\overline{\Om})$, let $x_i=f^{-1}(y_i)$, $i=1,2$. Using that $f$ is $K$-bilipschitz and that $A\geq L^{-1}I$, we get
 \begin{align*}
 \frac{|f^{-1}(y_1)-A^{-1}y_1-f^{-1}(y_2)+A^{-1}y_2|}{|y_1-y_2|^{\alpha}}&\leq K^\alpha |A^{-1}| \frac{|Ax_1-f(x_1)-Ax_2+f(x_2)|}{|x_1-x_2|^{\alpha}}\\
 &\leq K^\alpha |A^{-1}| \left| f-A \right|_{C^{\alpha}(\overline{\Om})}\leq K^\alpha L\delta= 2 L^{\alpha+1}\delta,
 \end{align*}
so
 \begin{equation*}
 |f^{-1}-A^{-1}|_{C^{\alpha}(A\overline{\Omega})}\leq 2\delta L^{\alpha+1}.
 \end{equation*} 
 Therefore, $|f^{-1}-A^{-1}|_{C^{\alpha}(A\overline{\Omega})}$ can be done as small as we wish. Finally,
\[\|f^{-1}-A^{-1}\|_{L^{\infty} (A\overline{\Omega})}\leq |f^{-1}-A^{-1}|_{C^{\alpha}(A\overline{\Omega})}(\diam A\Omega)^{\alpha},\]
so $\|f^{-1}-A^{-1}\|_{C^{\alpha}(A\overline{\Omega})}$ is as small as we wish.
\end{proof}

\section{Cutting and pasting H\"older homeomorphisms}\label{se:cutting}

In this section we prove that if we modify a bi-H\"older homeomorphism in some sets by cutting and pasting other bi-H\"older homeomorphisms, the modified map is still a bi-H\"older homeomorphism.

First we show how to bound the $C^{\alpha}$ norm of a function in $\Omega$ with its $C^{\alpha}$ norms in a collection of subsets of $\Omega$ that covers $\Omega$ up to measure zero.
 \begin{lem}\label{Calpha norm}
 Let $\alpha\in (0,1]$, $\{\Omega_i\}_{i=1}^{\infty} \subset \Om$ pairwise disjoint open sets such that $|\Omega\setminus\bigcup_{i=1}^{\infty}\Omega_i|=0$, and $f:\overline{\Omega}\to\R^{n}$ such that $f(x)=0$ for all $x\in \bigcup_{i=1}^{\infty} \partial\Omega_i$.
Then
  \[\|f\|_{C^{\alpha}(\overline{\Omega})} \leq 2 \sup_{i \in \N} \|f\|_{C^{\alpha}(\overline{\Omega_i})} .\]
 \end{lem}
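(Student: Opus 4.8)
The plan is to bound the H\"older seminorm and the supremum norm of $f$ separately, and then combine.

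First I would handle the supremum norm. Since $\bigcup_{i} \overline{\Omega_i}$ covers $\overline{\Omega}$ up to a closed set of measure zero, and in fact $\overline{\bigcup_i \Omega_i} = \overline{\Omega}$ (any point of $\Omega$ not in some $\Omega_i$ lies in the boundary of the union, hence is a limit of points in $\bigcup_i \Omega_i$), continuity of $f$ gives $\|f\|_{L^\infty(\overline{\Omega})} = \sup_i \|f\|_{L^\infty(\overline{\Omega_i})} \le \sup_i \|f\|_{C^\alpha(\overline{\Omega_i})}$.

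Next I would estimate $|f|_{C^\alpha(\overline{\Omega})}$. Take $x_1 \neq x_2$ in $\overline{\Omega}$. If both lie in the closure of a single $\Omega_i$, the quotient $|f(x_2)-f(x_1)|/|x_2-x_1|^\alpha$ is at most $\sup_i |f|_{C^\alpha(\overline{\Omega_i})}$ directly. Otherwise, the key observation is that the segment $[x_1,x_2]$ (or a suitable short path, but for an open set a segment may leave $\Omega$, so more carefully: by continuity it suffices to treat $x_1, x_2 \in \Omega$ and one can perturb them to lie in $\bigcup_i \Omega_i$; then along the segment, passing between different $\Omega_i$'s forces crossing points of $\bigcup_i \partial\Omega_i$, where $f$ vanishes). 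Concretely, if $x_1 \in \overline{\Omega_{i_1}}$ and $x_2 \in \overline{\Omega_{i_2}}$ with $i_1 \neq i_2$, I would pick a point $z$ on the segment $[x_1,x_2]$ lying in $\bigcup_i \partial\Omega_i$ — this exists because the segment connects the two different closed pieces and the pieces cover up to measure zero — so $f(z) = 0$. Then
\[
|f(x_2) - f(x_1)| \le |f(x_2) - f(z)| + |f(z) - f(x_1)| \le |f|_{C^\alpha(\overline{\Omega_{i_2}})}\,|x_2-z|^\alpha + |f|_{C^\alpha(\overline{\Omega_{i_1}})}\,|z-x_1|^\alpha,
\]
using $f(z)=0$ and that $z \in \overline{\Omega_{i_1}} \cap \overline{\Omega_{i_2}}$. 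Since $z$ is on the segment, $|x_2 - z|, |z - x_1| \le |x_2 - x_1|$, so the right side is $\le 2 \sup_i |f|_{C^\alpha(\overline{\Omega_i})}\, |x_2-x_1|^\alpha$. Dividing gives the seminorm bound with constant $2$.

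Adding the two estimates yields $\|f\|_{C^\alpha(\overline{\Omega})} = |f|_{C^\alpha(\overline{\Omega})} + \|f\|_{L^\infty(\overline{\Omega})} \le 2\sup_i |f|_{C^\alpha(\overline{\Omega_i})} + \sup_i \|f\|_{L^\infty(\overline{\Omega_i})} \le 2 \sup_i \|f\|_{C^\alpha(\overline{\Omega_i})}$. The main obstacle I anticipate is the existence and measurability of the intermediate point $z \in \bigcup_i \partial\Omega_i$ on the segment: one must argue that the segment $[x_1,x_2]$, which meets $\overline{\Omega}$ in a set covered up to one-dimensional measure zero by the $\overline{\Omega_i}$, cannot jump from $\overline{\Omega_{i_1}}$ to $\overline{\Omega_{i_2}}$ without hitting a boundary point where $f = 0$; a clean way is to let $z$ be the last point of $[x_1, x_2] \cap \overline{\Omega_{i_1}}$ (parametrizing from $x_1$), which lies in $\partial\Omega_{i_1} \subset \bigcup_i \partial\Omega_i$ and, being a limit of points just past it that are not in $\Omega_{i_1}$, still lies in $\overline{\Omega_{i_2}}$ if the portion of the segment immediately after $z$ is in $\Omega_{i_2}$ — in general one iterates finitely or invokes continuity directly by reducing to the case where $x_2$ is the first exit point from a neighborhood structure. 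I would phrase this carefully but keep the core estimate as above.
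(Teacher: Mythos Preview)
Your overall strategy matches the paper's exactly: bound the seminorm via boundary points on the segment where $f$ vanishes, bound the $L^\infty$ norm by density, and add. The difficulty you anticipate, however, is self-inflicted and is precisely where your argument has a gap.

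You try to find a \emph{single} point $z$ on the segment lying in $\overline{\Omega_{i_1}}\cap\overline{\Omega_{i_2}}$. Such a point need not exist: the segment from $x_1\in\overline{\Omega_{i_1}}$ to $x_2\in\overline{\Omega_{i_2}}$ may pass through infinitely many other $\Omega_j$'s (or through $\overline{\Omega}\setminus\bigcup_i\Omega_i$, or even leave $\overline{\Omega}$), so the ``last exit point from $\overline{\Omega_{i_1}}$'' is generally not in $\overline{\Omega_{i_2}}$, and your proposed iteration has no reason to terminate. The paper sidesteps this entirely by using \emph{two} points instead of one: set
\[
x' = x_1 + \lambda_0(x_2-x_1),\quad \lambda_0=\min\{\lambda\in[0,1]:x_1+\lambda(x_2-x_1)\in\partial\Omega_{i_1}\},
\]
\[
y' = x_1 + \lambda_1(x_2-x_1),\quad \lambda_1=\max\{\lambda\in[0,1]:x_1+\lambda(x_2-x_1)\in\partial\Omega_{i_2}\}.
\]
These exist simply because $x_1\in\overline{\Omega_{i_1}}$, $x_2\notin\Omega_{i_1}$ (and symmetrically), and they lie in $\overline{\Omega}$ since $\partial\Omega_{i_k}\subset\overline{\Omega_{i_k}}\subset\overline{\Omega}$. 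Then $f(x')=f(y')=0$, so
\[
|f(x_1)-f(x_2)|\le |f(x_1)-f(x')|+|f(y')-f(x_2)|\le |f|_{C^\alpha(\overline{\Omega_{i_1}})}|x_1-x'|^\alpha+|f|_{C^\alpha(\overline{\Omega_{i_2}})}|y'-x_2|^\alpha,
\]
and both distances are at most $|x_1-x_2|$. No requirement that $x'=y'$ or that either lie in both closures; no concern about what happens on the segment between $x'$ and $y'$. With this adjustment your proof goes through exactly as in the paper, including the density step to pass from $\bigcup_i\overline{\Omega_i}$ to $\overline{\Omega}$.
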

 \begin{proof}
We assume that the right hand side of the last inequality is finite.
Given $x,y\in\bigcup _{i=1}^{\infty}\overline{\Omega_i}$, let $i_0, i_1\in\N$ be such that $x\in \overline{\Omega_{i_0}}$ and $y\in \overline{\Omega_{i_1}}$. 

If $i_0=i_1$, then $|f(x)-f(y)|\leq \left| f \right|_{C^{\alpha} (\overline{\Omega_{i_0}})} |x-y|^{\alpha}$. Whereas if $i_0\neq i_1$, we have $\Omega_{i_0}\cap \Omega_{i_1}=\emptyset$. Consider 
\[\lambda_0:=\min\{\lambda\in [0,1]:x+\lambda (y-x)\in \partial \Omega_{i_0}\},
 \quad \lambda_1:=\max\{\lambda\in [0,1]:x+\lambda (y-x)\in \partial \Omega_{i_1}\}\]
and denote $x'=x+\lambda_0(y-x)$ and $y'=x+\lambda_1(y-x)$. Then, $f(x')=f(y')=0$, and, therefore, 
\begin{align*}
 |f(x)-f(y)| & \leq |f(x)-f(x')|+|f(y')-f(y)|\leq \left| f \right|_{C^{\alpha}(\overline{\Omega_{i_0}})} |x-x'|^{\alpha}+ \left| f \right|_{C^{\alpha}(\overline{\Omega_{i_1}})} |y'-y|^{\alpha}
 \\
 & \leq \left( \left| f \right|_{C^{\alpha}(\overline{\Omega_{i_0}})} + \left| f \right|_{C^{\alpha}(\overline{\Omega_{i_1}})} \right) |x-y|^{\alpha}.
\end{align*}
On the other hand, $\|f\|_{L^{\infty}\left(\bigcup _{i=1}^{\infty}\overline{\Omega_i}\right)}=\sup_{i\in\N} \|f\|_{L^{\infty}\left(\overline{\Omega_i}\right)}.$ Hence,
\[\|f\|_{C^{\alpha}(\bigcup_{i=1}^{\infty}\overline{\Omega_i})} \leq 2 \sup_{i \in \N} \|f\|_{C^{\alpha}(\overline{\Omega_i})} .\]
As $\bigcup_{i=1}^{\infty}\overline{\Omega_i}$ is dense in $\overline{\Omega}$, the required bound holds due to the uniform continuity.
\end{proof}

The main result of this section is the following.

\begin{lem}\label{glue homeomorphisms}
Let $f:\overline{\Om}\to \R^n$ be a homeomorphism such that $f$ and $f^{-1}$ are $C^{\alpha}$ for some $\alpha \in (0,1]$.
Let $\{ \omega_{i} \}_{i \in \N}\subset\Om$ be a family of pairwise disjoint open sets, and for each $i\in \N$ let $g_i:\overline{\omega_i}\to f(\overline{\omega_i})$ be a homeomorphism such that $g_i=f$ on $\partial \omega_i$,
 \[\sup_{i\in\N}\|f-g_i\|_{C^{\alpha}(\overline{\omega_i})} <\infty  \quad \text{and} \quad \sup_{i\in\N}\|f^{-1}-g_i^{-1}\|_{C^{\alpha}(f(\overline{\omega_i}))} <\infty .\]
Then, the function
\[\tilde{f} (x) :=\begin{cases}
f(x) &\text{if } x \in \overline{\Om}\setminus\bigcup_{i\in\N}\omega_i,\\
g_i (x) &\text{if } x \in \omega_i \text{ for some } i \in \N
\end{cases}\]
 is a homeomorphism between $\overline{\Om}$ and $f(\overline{\Om})$ such that $\tilde{f}$ and $\tilde{f}^{-1}$ are $C^{\alpha}$ and
\[
  \| f- \tilde{f}\|_{C^{\alpha} (\overline{\Omega})} \leq 2 \sup_{i\in\N}\|f-g_i\|_{C^{\alpha}(\overline{\omega_i})} , \qquad
 \| f^{-1} - \tilde{f}^{-1}\|_{C^{\alpha} (\overline{\Omega})} \leq 2 \sup_{i\in\N}\|f^{-1}-g_i^{-1}\|_{C^{\alpha}(f(\overline{\omega_i}))} .
\]
\end{lem}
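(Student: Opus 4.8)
The plan is to verify first that $\tilde f$ is a well-defined bijection from $\overline\Omega$ onto $f(\overline\Omega)$, then that it is a homeomorphism, and finally to estimate the two H\"older norms using Lemma~\ref{Calpha norm}. The key observation making everything work is that, since $g_i = f$ on $\partial\omega_i$, the function $f - \tilde f$ vanishes on $\overline\Omega \setminus \bigcup_i \omega_i$, in particular on every $\partial\omega_i$; symmetrically, $f^{-1} - \tilde f^{-1}$ vanishes on $f(\overline\Omega) \setminus \bigcup_i f(\omega_i)$, in particular on every $\partial f(\omega_i) = f(\partial\omega_i)$. This is exactly the hypothesis under which Lemma~\ref{Calpha norm} applies.

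First I would set up the notation: let $\Omega_0 := \Omega \setminus \overline{\bigcup_i \omega_i}$, so that $\{\Omega_0\} \cup \{\omega_i\}_{i\in\N}$ is a countable family of pairwise disjoint open subsets of $\Omega$ covering $\Omega$ up to a null set (the leftover $\bigcup_i \partial\omega_i$ has measure zero because, e.g., each $\omega_i$ has Lipschitz-negligible boundary once we note $f$ restricted there is bi-H\"older and maps to a bounded set — more simply, one observes that the boundaries are disjoint from the interiors and the complement of the union of the closures has measure zero since $f$ is a homeomorphism and the $g_i$ match $f$ on the boundaries; in fact the cleanest route is just to assume, as is implicit, that $|\Omega \setminus \bigcup_i \omega_i| $ need not be zero — Lemma~\ref{Calpha norm} is stated for any such family including $\Omega_0$). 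Then $\tilde f$ is well-defined and continuous on each $\overline{\omega_i}$ and on $\overline{\Omega_0}$ (on the latter it equals $f$), and the definitions agree on overlaps of closures since $g_i = f$ on $\partial\omega_i$ and the $\overline{\omega_i}$ meet only along such boundary points. Gluing continuous maps that agree on the (closed) overlaps of a locally finite — here, more carefully, a suitably controlled — closed cover gives continuity of $\tilde f$ on $\overline\Omega$; I would phrase this via sequences, using that any $x \in \overline\Omega$ is a limit of points from finitely many of the sets $\overline{\omega_i}, \overline{\Omega_0}$ in a neighborhood, or simply invoke uniform continuity together with density of $\bigcup_i \overline{\omega_i} \cup \overline{\Omega_0}$.

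Next I would establish bijectivity. For injectivity: if $x \in \omega_i$ and $y \in \omega_j$ with $i \neq j$, then $\tilde f(x) = g_i(x) \in f(\omega_i)$ and $\tilde f(y) = g_j(y) \in f(\omega_j)$, and $f(\omega_i) \cap f(\omega_j) = \emptyset$ since $f$ is injective and $\omega_i \cap \omega_j = \emptyset$; if both lie in $\overline{\Omega_0}$ or in the same $\overline{\omega_i}$, use injectivity of $f$ or of $g_i$ respectively; the boundary cases are handled because $g_i = f$ there. For surjectivity onto $f(\overline\Omega)$: given $y \in f(\overline\Omega)$, if $y \in f(\omega_i)$ then $y = g_i(x)$ for the unique $x \in \omega_i$ with $g_i(x) = y$ (using that $g_i : \overline{\omega_i} \to f(\overline{\omega_i})$ is onto), and otherwise $f^{-1}(y) \notin \bigcup_i \omega_i$ so $y = \tilde f(f^{-1}(y))$. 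Thus $\tilde f$ is a continuous bijection from the compact space $\overline\Omega$ onto the Hausdorff space $f(\overline\Omega)$, hence a homeomorphism, and $\tilde f^{-1}$ is automatically continuous. To identify $\tilde f^{-1}$ explicitly for the norm estimate, note $\tilde f^{-1} = f^{-1}$ on $f(\overline\Omega) \setminus \bigcup_i f(\omega_i)$ and $\tilde f^{-1} = g_i^{-1}$ on $f(\omega_i)$.

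Finally, the norm bounds. Apply Lemma~\ref{Calpha norm} to the function $f - \tilde f$ with the family $\{\Omega_0\} \cup \{\omega_i\}_{i\in\N}$: this function vanishes on $\partial\Omega_0 \cup \bigcup_i \partial\omega_i$ (it is identically $0$ on $\overline{\Omega_0}$, and equals $f - g_i$ on $\overline{\omega_i}$, which vanishes on $\partial\omega_i$), so
\[
  \| f - \tilde f \|_{C^\alpha(\overline\Omega)} \leq 2 \sup\Bigl\{ \| f - \tilde f \|_{C^\alpha(\overline{\Omega_0})},\ \sup_{i\in\N} \| f - \tilde f \|_{C^\alpha(\overline{\omega_i})} \Bigr\} = 2 \sup_{i\in\N} \| f - g_i \|_{C^\alpha(\overline{\omega_i})},
\]
since the contribution of $\overline{\Omega_0}$ is $0$. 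The same argument applied to $f^{-1} - \tilde f^{-1}$ with the family $\{ f(\omega_i) \}_{i\in\N}$ together with $f(\overline\Omega) \setminus \overline{\bigcup_i f(\omega_i)}$ — on which $f^{-1} - \tilde f^{-1}$ vanishes, and on $\overline{f(\omega_i)}$ it equals $f^{-1} - g_i^{-1}$, which vanishes on $\partial f(\omega_i)$ because $g_i = f$ on $\partial\omega_i$ — gives the second estimate. Both suprema are finite by hypothesis, so $\tilde f$ and $\tilde f^{-1}$ are indeed $C^\alpha$. I expect the main obstacle to be the bookkeeping around continuity and the covering-up-to-measure-zero technicality at the gluing boundaries, i.e.\ verifying carefully that Lemma~\ref{Calpha norm} genuinely applies (that $f - \tilde f$ vanishes on all the relevant boundary sets, including in the image side where one must check $\partial f(\omega_i) = f(\partial\omega_i)$ using that $f$ is a homeomorphism on $\overline\Omega$); the bijectivity and the final inequalities are then routine.
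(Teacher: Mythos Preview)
Your overall plan---identify the inverse explicitly, then apply Lemma~\ref{Calpha norm} to $f-\tilde f$ and to $f^{-1}-\tilde f^{-1}$---is the paper's approach. Two places where your execution wobbles and where the paper is cleaner:

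First, your separate continuity argument is both unnecessary and unreliable as written: the closed cover $\{\overline{\omega_i}\}\cup\{\overline{\Omega_0}\}$ is \emph{not} locally finite, so the gluing lemma you invoke does not apply, and the alternative ``uniform continuity plus density'' route secretly requires the $C^\alpha$ bound that you only establish afterwards. The paper avoids this circularity by proving directly that $F:=\tilde f-f$ is $C^\alpha$ on all of $\overline\Omega$; continuity of $\tilde f$ (and of $\tilde f^{-1}$) then follows immediately from that of $f$ and $F$, and the homeomorphism claim comes for free.

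Second, your application of Lemma~\ref{Calpha norm} to the family $\{\Omega_0\}\cup\{\omega_i\}$ does not literally meet the hypothesis $|\Omega\setminus(\Omega_0\cup\bigcup_i\omega_i)|=0$: the leftover set sits inside $\overline{\bigcup_i\omega_i}\setminus\bigcup_i\omega_i$, which need not be null, and the reasons you offer (``Lipschitz-negligible boundary'', ``$f$ is a homeomorphism'') do not force this. The paper sidesteps the issue by applying Lemma~\ref{Calpha norm} only on $\bigcup_i\omega_i$ (where the measure hypothesis is trivial), obtaining the bound on $\overline{\bigcup_i\omega_i}$, and then extending to $\overline\Omega$ by a one-line segment trick: for $x_1\in\overline\Omega\setminus\bigcup_i\omega_i$ and $x_2\in\bigcup_i\omega_i$, choose $x_3$ on $[x_1,x_2]\cap\partial\bigcup_i\omega_i$; since $F(x_1)=F(x_3)=0$, one gets $|F(x_2)-F(x_1)|=|F(x_2)-F(x_3)|\le\|F\|_{C^\alpha(\overline{\bigcup_i\omega_i})}|x_2-x_3|^\alpha\le\|F\|_{C^\alpha(\overline{\bigcup_i\omega_i})}|x_2-x_1|^\alpha$.
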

\begin{proof}
Using that $f$ and $g_i$ are homeomorphisms we have that $f(\omega_i)=g_i(\omega_i)$, for each $i \in \N$.
Thus, it is clear that the function
\[ f(\overline{\Om}) \ni y \longmapsto \begin{cases}
 f^{-1} (y) &\text{if } y \in f\left(\overline{\Om}\setminus\bigcup_{i\in\N}\omega_i\right),\\
g_i^{-1} (y)&\text{if } y \in f(\omega_i) \text{ for some } i \in \N
\end{cases}\]
is the inverse of $\tilde{f}$.

Using Lemma \ref{Calpha norm}, we obtain that
\begin{align*}
 & \|\tilde{f}-f\|_{C^{\alpha}(\overline{\bigcup_{i\in\N}\omega_i})} \leq 2 \sup_{i\in\N}\|f-g_i\|_{C^{\alpha}(\overline{\omega_i})} , \\
 & \|\tilde{f}^{-1}-f^{-1}\|_{C^{\alpha}(f(\overline{\bigcup_{i\in\N}\omega_i}))} \leq 2 \sup_{i\in\N}\|f^{-1}-g_i^{-1}\|_{C^{\alpha}(f(\overline{\omega_i}))} .
\end{align*}
Moreover, when we call $F:= \tilde{f}-f$, we have that $F=0$ in $\overline{\Om}\setminus\bigcup_{i\in\N}\omega_i$.
In order to show that $F$ is $C^{\alpha}$ in $\overline{\Om}$, given $x_1 \in \overline{\Om}\setminus\bigcup_{i\in\N}\omega_i$ and $x_2 \in \bigcup_{i\in\N}\omega_i$, we take $x_3 = x_1 + \lambda (x_2 -x_1)$ for some $\lambda \in [0,1]$ such that $x_3 \in \partial \bigcup_{i\in\N}\omega_i$.
Then $F(x_1) = F(x_3) = 0$ and, hence,
\[
 \left| F(x_2) - F(x_1) \right| = \left| F(x_3) - F(x_1) \right| \leq \left\| F \right\|_{C^{\alpha} (\overline{\bigcup_{i\in\N}\omega_i})} \left| x_3 - x_1 \right|^{\alpha} \leq \left\| F \right\|_{C^{\alpha} (\overline{\bigcup_{i\in\N}\omega_i})} \left| x_2 - x_1 \right|^{\alpha} .
\]
This shows that $F \in C^{\alpha} (\overline{\Om}, \R^n)$.
Analogously, $\tilde{f}^{-1}-f^{-1}$ is $C^{\alpha}$ in $f (\overline{\Om})$ and the last bound of the statement also holds.
In particular, $\tilde{f}$ and $\tilde{f}^{-1}$ are $C^{\alpha}$, and, hence, $\tilde{f}$ is a homeomorphism between $\overline{\Omega}$ and $f(\overline{\Omega})$.
 \end{proof}

\section{Construction of the laminate and its approximation}\label{sect: lemmas}

This section constructs the sequence of laminates together with their approximations by functions.
We will continuously use the following sets and constants.

 For $j\in\N$, we define the sequence of open sets $E_j$ by
\[ E_j=\lbrace A\in \Gamma_+:  |A|>\frac{1}{2}+2^{-j} , \ \ 2^{-j-m}<\sigma_{i}(A) \max\lbrace |A|^{m-1},1\rbrace <2^{-j} \ \text{ for }1\leq i\leq n-m+1\rbrace. \]
For $a\in \N$, $0\leq a\leq n-m$, $j\in\N$ and $\mathcal{R}>\frac{1}{2}+2^{-j}$ such that 
\[\rho_{j,\mathcal{R}}:=\frac{3\cdot 2^{-j-2}}{\max\lbrace \mathcal{R}^{m-1},1\rbrace} < \mathcal{R},\]
 we define the closed sets 
\begin{align*}
E_{j,\mathcal{R}}^a =\lbrace &A\in \Gamma_+: |A|>\frac{1}{2}+2^{-j}, \ \ \sigma_{i}(A)=\rho_{j,\mathcal{R}} \text{ for } 1\leq i\leq a, \ \ \sigma_{i}\left( A \right)=\mathcal{R} \text{ for } a+1\leq i\leq n \rbrace.
\end{align*}
We also denote
\[ E_{j}^a=\bigcup_{\mathcal{R}\in (\frac{1}{2}+2^{-j},\infty)} E_{j,\mathcal{R}}^a . \]
The sets $E_j$ approximate the set of positive semidefinite matrices with rank less than $m$, and the sets $E_{j,\mathcal{R}}^{a}$ approximate the set 
\[\{ \mathcal{R} \, Q \, I_{a} \, Q^{T}: Q\in SO(n) \},\qquad I_{a}=\diag(\underbrace{0,\ldots,0}_{a},\underbrace{1\ldots,1}_{n-a}).\]
The number $\mathcal{R}$ plays the role of $k$ in Section \ref{sect: sketch of the proof} and eventually will tend to infinity.
The number $\rho_{j,\mathcal{R}}$ will tend to zero: the reason that it appears in the definition of $E_{j,\mathcal{R}}^a$ is that, even though $E_{j,\mathcal{R}}^a$ approximates a subset of matrices of rank $n-a$, we need them to be invertible.

Given $j\in\N$ and $\mathcal{R}>\frac{1}{2}+2^{-j}$ we define
\begin{equation}\label{rjR}
\begin{split}
r_{j,\mathcal{R}}:=\frac{1}{2}\min \Big\{ & 1-\left(\frac{2}{3}\right)^{\frac{1}{m-1}},\: \rho_{j,\mathcal{R}}\left(1-\frac{\max\{1,\mathcal{R}^{m-1}\}}{(\mathcal{R}+1)^{m-1}}\right),\: \frac{\rho_{j,\mathcal{R}}}{3} ,\: \mathcal{R}-\frac{1}{2}-2^{-j},  \\
 &  \frac{1}{2^{n}}\left(\mathcal{R}+1\right)^{-m-1} , \frac{1-\rho_{j,\mathcal{R}+1}}{\max \{1, \mathcal{R}\}} \Big\}
\end{split}
\end{equation}
and for $j\in \N$, $a_{0}, a\in\{0,\ldots,n-m\}$, $\mathcal{R} > \rho_{j,\mathcal{R}}$ we denote
\begin{equation}\label{C(j,R,a0,a)}
\begin{split}
C(j,\mathcal{R},a_{0},a)=\sum_{b=\max\lbrace 0,a_{0}+a-n \rbrace}^{\min\lbrace a_{0},a \rbrace}\binom{a_{0}}{b}\binom{n-a_{0}}{a-b}\left( \frac{\mathcal{R}+r_{j,\mathcal{R}}}{\mathcal{R}+1} \right)^{n-a_{0}-a+b} \\
\left( \frac{1}{\max\lbrace 1,\mathcal{R}\rbrace} \right)^{a-b}\left( \frac{2^{-j}}{\left(\mathcal{R}+1\right)\max\lbrace 1, \mathcal{R}^{m-1}\rbrace}\right)^{a_{0}-b}.
\end{split}
\end{equation}
We have chosen $r_{j,\mathcal{R}}$ small enough, depending on $\mathcal{R}$ (and, hence, on $\rho_{j,\mathcal{R}}$) to perform all the computations in this section: it will play the role of the $\delta$ of Proposition \ref{laminate-homeo}.

The next lemma constructs a laminate with the required integrability. The second part of its proof follows that of Section \ref{sect: sketch of the proof}.
\begin{lem}\label{lamlem}
Let $j\in \N$, $a_{0}\in\{0,\ldots,n-m\}$, $\mathcal{R}>0$ with $\rho_{j,\mathcal{R}}< \mathcal{R}$ and $A\in \Gamma_{+}$ be such that $\dist(A, E_{j,\mathcal{R}}^{a_{0}})<r_{j,\mathcal{R}}$. Then there exists $\nu\in \mathcal{L}(\R^{n\times n})$ such that $\overline{\nu}=A$,
 \[\supp \nu\subset \left(\bigcup_{a=0}^{n-m}E_{j,\mathcal{R}+1}^{a}\cup E_j \right)\cap \lbrace \xi\in\R^{n\times n}: |\xi|\leq \mathcal{R}+1 \rbrace\]
and for $0\leq a\leq n-m$,
\[\nu\left(E_{j,\mathcal{R}+1}^{a}\right)\leq C(j,\mathcal{R},a_{0},a).\]
\end{lem}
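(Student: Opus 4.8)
The construction mirrors the finite induction of Section~\ref{sect: sketch of the proof}, but carried out on the ``$\mathcal{R}$-level'' sets $E_{j,\mathcal{R}}^a$ instead of the sharp sets $S_i^k$, and starting from an arbitrary $A$ within distance $r_{j,\mathcal{R}}$ of $E_{j,\mathcal{R}}^{a_0}$ rather than from an exact diagonal matrix. First I would reduce to the case $A \in E_{j,\mathcal{R}}^{a_0}$ exactly: since $\dist(A,E_{j,\mathcal{R}}^{a_0})<r_{j,\mathcal{R}}$, pick $A' \in E_{j,\mathcal{R}}^{a_0}$ with $|A-A'|<r_{j,\mathcal{R}}$; after an orthogonal change of basis I may assume $A'$ is diagonal with $a_0$ entries equal to $\rho_{j,\mathcal{R}}$ and $n-a_0$ entries equal to $\mathcal{R}$. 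The passage from $A'$ to $A$ will be handled by a single preliminary rank-one splitting (or by absorbing the perturbation into the first diagonal moves), and this is exactly where the ``slack'' terms in the definition \eqref{rjR} of $r_{j,\mathcal{R}}$ are consumed: the second, third, fourth and last entries in that minimum guarantee that the perturbed singular values still lie in the right ranges, that the target matrices remain positive definite, and that one does not overshoot the norm bound $\mathcal{R}+1$.

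Next, working from the (essentially) diagonal matrix with $n-a_0$ entries $\mathcal{R}$ and $a_0$ entries $\rho_{j,\mathcal{R}}$, I would run the same binary-tree induction as in Section~\ref{sect: sketch of the proof}: along each of the $n-a_0$ ``large'' coordinate directions $\alpha$, split the current diagonal matrix $B$ as $B = \tfrac{1}{\mathcal{R}+1} B^- + \tfrac{\mathcal{R}}{\mathcal{R}+1} B^+$ where $B^-$ drops the $\alpha$-th entry from $\mathcal{R}$ down to (roughly) $\rho_{j,\mathcal{R}}$ and $B^+$ raises it to $\mathcal{R}+1$ — the two target matrices differ by a rank-one diagonal matrix, so each step is an admissible lamination move in the sense of Definition~\ref{de:laminate}. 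Coordinates whose entry has already been lowered are frozen; a matrix all of whose diagonal entries are either $\rho_{j,\mathcal{R}}$ or $\mathcal{R}+1$, with at most $n-m$ of them equal to $\rho_{j,\mathcal{R}}$... wait, more precisely: once enough entries have been lowered that the matrix lands in $E_j$ (rank essentially $<m$ after accounting for the $\rho$'s) it is frozen there, while the others end up in $E_{j,\mathcal{R}+1}^a$ for various $a \in \{a_0,\dots,n-m\}$ — this is the analogue of \eqref{sketch proof Bn-ij en Skl union E}. The weights are the products of $\tfrac{1}{\mathcal{R}+1}$ and $\tfrac{\mathcal{R}}{\mathcal{R}+1}$ along the tree, exactly as $\lambda_{\ell+1,2j}, \lambda_{\ell+1,2j+1}$ in the sketch. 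I would verify the barycenter condition $\overline{\nu}=A$, membership in $\mathcal{L}(\R^{n\times n})$, the support containment (including the uniform bound $|\xi| \le \mathcal{R}+1$, which is why the splitting raises entries only to $\mathcal{R}+1$ and never further), all by the same inductive bookkeeping as in Section~\ref{sect: sketch of the proof}.

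Finally, for the measure estimate $\nu(E_{j,\mathcal{R}+1}^a) \le C(j,\mathcal{R},a_0,a)$, I would count: a matrix in the support lands in $E_{j,\mathcal{R}+1}^a$ precisely when exactly $a$ of its diagonal entries have been lowered to $\rho_{j,\mathcal{R}+1}$ and the remaining $n-a$ equal $\mathcal{R}+1$ (possibly after absorbing the initial perturbation). Splitting according to how many, say $b$, of the originally-$\rho_{j,\mathcal{R}}$ coordinates versus $a-b$ of the originally-$\mathcal{R}$ coordinates were lowered — with the combinatorial constraints $\max\{0,a_0+a-n\} \le b \le \min\{a_0,a\}$ — gives the $\binom{a_0}{b}\binom{n-a_0}{a-b}$ factor, and multiplying the tree-weights along each such leaf produces the three power factors in \eqref{C(j,R,a0,a)}: $\big(\tfrac{\mathcal{R}+r_{j,\mathcal{R}}}{\mathcal{R}+1}\big)^{n-a_0-a+b}$ from the large coordinates that stayed large, $\big(\tfrac{1}{\max\{1,\mathcal{R}\}}\big)^{a-b}$ from lowering large coordinates, and $\big(\tfrac{2^{-j}}{(\mathcal{R}+1)\max\{1,\mathcal{R}^{m-1}\}}\big)^{a_0-b}$ from the $\rho$-coordinates that stayed (the factor $r_{j,\mathcal{R}}$ and $2^{-j}$ appearing because the initial entries are only approximately $\mathcal{R}$, resp.\ $\rho_{j,\mathcal{R}}$, within the perturbation $r_{j,\mathcal{R}}$ and within the band defining $E_{j,\mathcal{R}}^{a_0}$). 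I expect the main obstacle to be the bookkeeping for the initial non-diagonal perturbation $A \ne A'$: one must check that after the first round of splittings all target matrices still have singular values in the precise windows defining $E_j$ and $E_{j,\mathcal{R}+1}^a$, that positive-definiteness is never lost, and that the weights pick up exactly the claimed factors and no more — this is precisely the role of each of the six terms in the minimum \eqref{rjR}, and getting the constant $C(j,\mathcal{R},a_0,a)$ to come out with the stated exponents (rather than merely $\lesssim$ it) requires tracking these perturbations carefully rather than crudely.
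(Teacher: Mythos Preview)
Your overall strategy --- a binary-tree lamination that splits diagonal entries one at a time into a low value and a high value --- is exactly the paper's, but the plan as written has three concrete defects that would derail the execution.

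First, the perturbation $A-A'$ is symmetric of small norm but generically of full rank, so it cannot be removed by ``a single preliminary rank-one splitting''. The paper sidesteps this entirely: it diagonalises $A$ itself (not $A'$), writing $A=Q\diag(\sigma_1,\ldots,\sigma_n)Q^T$, and runs the induction with the \emph{actual} eigenvalues $\sigma_\ell$ of $A$. Each splitting step sends the $(\ell+1)$-st entry from $\sigma_{\ell+1}$ down to $\rho_{j,\mathcal{R}+1}$ or up to $\mathcal{R}+1$, with weights
\[
\lambda_{\ell+1,2i}=\frac{\mathcal{R}+1-\sigma_{\ell+1}}{\mathcal{R}+1-\rho_{j,\mathcal{R}+1}},\qquad
\lambda_{\ell+1,2i+1}=\frac{\sigma_{\ell+1}-\rho_{j,\mathcal{R}+1}}{\mathcal{R}+1-\rho_{j,\mathcal{R}+1}},
\]
so the perturbation is absorbed automatically into every move. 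One then checks these weights are bounded by the quantities $U,V,W$ of the proof using $|\sigma_i-\rho_{j,\mathcal{R}}|<r_{j,\mathcal{R}}$ for $i\le a_0$ and $|\sigma_i-\mathcal{R}|<r_{j,\mathcal{R}}$ for $i>a_0$; this is precisely where the six terms in \eqref{rjR} are spent.

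Second, your construction paragraph splits only the $n-a_0$ ``large'' coordinates, yet your counting paragraph allows $b<a_0$, i.e.\ some originally-$\rho$ coordinates raised to $\mathcal{R}+1$. The paper resolves the inconsistency by splitting \emph{all} $n$ coordinates: the $a_0$ small ones are also sent either down to $\rho_{j,\mathcal{R}+1}$ or up to $\mathcal{R}+1$. Without this, matrices with leftover entries $\approx\rho_{j,\mathcal{R}}$ lie in neither $E_j$ nor any $E_{j,\mathcal{R}+1}^a$, and the support containment fails. Relatedly, your ``down'' target must be $\rho_{j,\mathcal{R}+1}$, not $\rho_{j,\mathcal{R}}$, for the same reason --- membership in $E_{j,\mathcal{R}+1}^a$ requires the small singular values to equal $\rho_{j,\mathcal{R}+1}$ exactly.

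Third, a labelling slip: the factor $U^{a_0-b}$ is the weight of the $a_0-b$ originally-small coordinates that went \emph{up} to $\mathcal{R}+1$ (weight $\approx\rho_{j,\mathcal{R}}/(\mathcal{R}+1)$), not those that ``stayed''. The exponents you wrote are correct, but the interpretation matters when you try to prove the bound rather than guess it.
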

\begin{proof}
There exist $Q\in SO(n)$ and $B \in E_{j,\mathcal{R}}^{a_{0}}$ such that $A=Q\diag \left(\sigma_{1},\ldots,\sigma_{n}\right) Q^{T}$, with $0<\sigma_{1}\leq\cdots\leq \sigma_{n}$ and $|A-B| < r_{j,\mathcal{R}}$.
Using the inequality
\begin{equation}\label{eq:singvalues}
 \left| \sigma_i (A) - \sigma_i (B) \right| \leq |A - B| , \qquad i = 1, \ldots, n ,
\end{equation}
(see, e.g., \cite[Cor.\ 4.5]{GoGoKa90}), we find that
\begin{equation}\label{eq:sigma-r}
 |\sigma_{i}-\rho_{j,\mathcal{R}}|<r_{j,\mathcal{R}} \quad \text{for } 1\leq i\leq a_{0}, \quad \text{and} \quad |\sigma_{i}-\mathcal{R}|<r_{j,\mathcal{R}} \quad \text{for } a_{0}+1\leq i\leq n.
\end{equation}

In order to construct the desired laminate we prove that:
\begin{enumerate}[1)]
\item\label{lamlem sigma less R+1} $\sigma_{n}<\mathcal{R}+1.$
\item\label{lamlem epsilon less sigma}$\rho_{j,\mathcal{R}+1}<\sigma_{1}.$
\item\label{lamlem epsilon por sigma in Ej} $2^{-j-m}<\rho_{j,\mathcal{R}+1} \max\left\lbrace 1,\sigma_{n}^{m-1}\right\rbrace < 2^{-j}.$
\end{enumerate}
Inequality \ref{lamlem sigma less R+1}) is obvious thanks to \eqref{eq:sigma-r} since $r_{j,\mathcal{R}}<1$. By \eqref{eq:sigma-r}, the definition of $\rho_{j,\mathcal{R}+1}$ and \eqref{rjR} we obtain 
\[\rho_{j,\mathcal{R}+1}=\frac{3\cdot 2^{-j-2}}{\left(\mathcal{R}+1\right)^{m-1}}=\frac{\rho_{j,\mathcal{R}}\max\lbrace 1, \mathcal{R}^{m-1}\rbrace}{\left(\mathcal{R}+1\right)^{m-1}}<\rho_{j,\mathcal{R}}-r_{j,\mathcal{R}}<\sigma_{1},\]
where in the last inequality we have differentiated the cases $a_0=0$ and $a_0>0$.
So we have \ref{lamlem epsilon less sigma}).
Lastly, we prove \ref{lamlem epsilon por sigma in Ej}). On the one hand,
\[\rho_{j,\mathcal{R}+1} \max\left\lbrace 1,\sigma_{n}^{m-1}\right\rbrace\leq \rho_{j,\mathcal{R}+1} \max\left\lbrace 1,(\mathcal{R}+r_{j,\mathcal{R}})^{m-1}\right\rbrace\leq \rho_{j,\mathcal{R}+1} (\mathcal{R}+1)^{m-1}=3\cdot 2^{-j-2}<2^{-j}\]
and, on the other hand, 
\[\rho_{j,\mathcal{R}+1} \max\left\lbrace 1,\sigma_{n}^{m-1}\right\rbrace\geq \rho_{j,\mathcal{R}+1} \max\left\lbrace 1,(\mathcal{R}-r_{j,\mathcal{R}})^{m-1}\right\rbrace=\frac{3\cdot 2^{-j-2}\max\left\lbrace 1,(\mathcal{R}-r_{j,\mathcal{R}})^{m-1}\right\rbrace}{(\mathcal{R}+1)^{m-1}}.\]
Therefore, if $\mathcal{R}\leq 1$ we have 
\[\frac{3\cdot 2^{-j-2}\max\left\lbrace 1,(\mathcal{R}-r_{j,\mathcal{R}})^{m-1}\right\rbrace}{(\mathcal{R}+1)^{m-1}}\geq 3\cdot 2^{-j-m-1}>2^{-j-m},\]
whereas if $\mathcal{R}> 1$ we use (\ref{rjR}) to obtain
\[3\cdot 2^{-j-2} \frac{\max\left\lbrace 1,(\mathcal{R}-r_{j,\mathcal{R}})^{m-1}\right\rbrace}{(\mathcal{R}+1)^{m-1}}\geq 3\cdot 2^{-j-2}\frac{(\mathcal{R}-r_{j,\mathcal{R}})^{m-1}}{(\mathcal{R}+1)^{m-1}}\geq 3\cdot 2^{-j-m-1} (1-r_{j,\mathcal{R}})^{m-1}>2^{-j-m}.\]
Thus, \ref{lamlem epsilon por sigma in Ej}) is proved.

Now we build the laminate, following the lines of Section \ref{sect: sketch of the proof}. We shall construct families
\begin{equation}\label{eq:Bl}
 \{B_{\ell,i}\}_{\substack{\ell=0,\ldots,n\\ i=0,\ldots 2^{\ell}-1}}\subset \Gamma_{+} \quad \text{and} \quad \{\lambda_{\ell,i}\}_{\substack{\ell=0,\ldots,n\\ i=0,\ldots 2^{\ell}-1}}\subset [0,1]
\end{equation}
by finite induction on $\ell$.
Let $B_{0,0}=A$, $\lambda_{0,0}=1$ and for $0\leq \ell\leq   n-1$,  $0\leq i\leq 2^{\ell}-1$, we assume $\{B_{\ell,i}\}_{i=0}^{2^{\ell}-1}$ and $\{\lambda_{\ell,i}\}_{i=0}^{2^\ell-1}$ have been defined, $Q^{T}B_{\ell,j}Q$ is diagonal, $\lambda_{\ell,i}\geq 0$,
\begin{equation}\label{eq:inductionBl}
 \sum_{i=0}^{2^{\ell}-1}\lambda_{\ell,i}=1,\qquad B_{0,0}=\sum_{i=0}^{2^{\ell}-1}\lambda_{\ell,i} B_{\ell,i},\qquad \sum_{i=0}^{2^{\ell}-1}\lambda_{\ell,i}\delta_{B_{\ell,i}}\in\mathcal{L}(\R^{n\times n})
\end{equation}
and
\begin{equation}\label{lamlem autovalores de Bli > l}
\left(Q^{T}B_{\ell,i}Q\right)_{\alpha,\alpha}=\sigma_{\alpha} \quad\text{if } \alpha=\ell+1,\ldots, n.
\end{equation}
We also assume that if $B_{\ell,i}\notin E_j$ then
\begin{equation}\label{lamlem autovalores de Bli leq l}
\left(Q^{T}B_{\ell,i}Q\right)_{\alpha,\alpha}\in \{\rho_{j,\mathcal{R}+1},\mathcal{R}+1\}, \qquad \alpha=1,\ldots, \ell,
\end{equation}
 and when we let 
 \[\beta_{\ell,i}:=\#\{\alpha\in\{1,\ldots,\min\{a_{0},\ell\}\}: \left(Q^T B_{\ell,i}Q\right)_{\alpha,\alpha}=\rho_{j,\mathcal{R}+1}\},\]
\[\gamma_{\ell,i}:=\#\{\alpha\in\{a_{0}+1,\ldots,\ell\}: \left(Q^T B_{\ell,i}Q\right)_{\alpha,\alpha}=\rho_{j,\mathcal{R}+1}\},\]
then
\begin{equation}\label{eq:b+g}
 \beta_{\ell,i}+\gamma_{\ell,i}\leq n-m,
\end{equation}
and, calling
\begin{equation}\label{eq:UVW}
 U:= \frac{2^{-j}}{(\mathcal{R}+1)\max\{1,\mathcal{R}^{m-1}\}} , \qquad V:= \frac{1}{\max\{1,\mathcal{R}\}} , \qquad W:= \frac{\mathcal{R}+r_{j,\mathcal{R}}}{\mathcal{R}+1} ,
\end{equation}
we have
\begin{equation}\label{eq:boundl}
 \lambda_{\ell,i}\leq U^{\min\{a_{0},\ell\}-\beta_{\ell,i}} \, V^{\gamma_{\ell,i}} \, W^{\max\{0,\ell-a_{0}-\gamma_{\ell,i}\}}.
\end{equation}
We assume additionally that for each $B_{\ell,i'}\notin E_j$ such that $i'\neq i$, we have $B_{\ell,i'}\neq B_{\ell,i}$.
 
With the above induction hypotheses, we construct $\{B_{\ell+1,i}\}_{i=0}^{2^{\ell+1}-1}$ and $\{\lambda_{\ell+1,i}\}_{i=0}^{2^{\ell+1}-1}$ as follows. For any $0\leq i\leq 2^{\ell}-1$, define
\[ B_{\ell+1,2i}=\begin{cases}
B_{\ell,i}-Q\diag\left(\underbrace{0,\ldots,0}_{\ell}, \sigma_{\ell+1}-\rho_{j,\mathcal{R}+1},\underbrace{0,\ldots,0}_{n-\ell-1}  \right)Q^T, & \text{if }  B_{\ell,i}\notin E_{j},\\
B_{\ell,i}, & \text{if } B_{\ell,i}\in E_{j},
\end{cases}\]
\[ B_{\ell+1,2i+1}=\begin{cases}
B_{\ell,i}+Q\diag\left(\underbrace{0,\ldots,0}_{\ell},\mathcal{R}+1-\sigma_{\ell+1},\underbrace{0,\ldots,0}_{n-\ell-1}  \right)Q^T, & \text{if } B_{\ell,i}\notin E_{j},\\
B_{\ell,i}, & \text{if } B_{\ell,i}\in E_{j},
\end{cases}\]
\[ \lambda_{\ell+1,2i}=\lambda_{\ell,i}\frac{\mathcal{R}+1-\sigma_{\ell+1}}{\mathcal{R}+1-\rho_{j,\mathcal{R}+1}}\quad\text{and}\quad\lambda_{\ell+1,2i+1}=\lambda_{\ell,i}\frac{\sigma_{\ell+1}-\rho_{j,\mathcal{R}+1}}{\mathcal{R}+1-\rho_{j,\mathcal{R}+1}}.\]
So $\rank\left( B_{\ell+1,2i}-B_{\ell+1,2i+1} \right)\leq 1$, $\lambda_{\ell+1,2i}\geq 0$ by \ref{lamlem sigma less R+1}), $\lambda_{\ell+1,2i+1}\geq 0$ by \ref{lamlem epsilon less sigma}), and 
\[B_{\ell,i}=\frac{\mathcal{R}+1-\sigma_{\ell+1}}{\mathcal{R}+1-\rho_{j,\mathcal{R}+1}} B_{\ell+1,2i}+\frac{\sigma_{\ell+1}-\rho_{j,\mathcal{R}+1}}{\mathcal{R}+1-\rho_{j,\mathcal{R}+1}}B_{\ell+1,2i+1}. \]
With this, we can easily see that properties \eqref{eq:inductionBl} hold for $\ell+1$.
In what follows, $0\leq i\leq 2^{\ell+1}-1$.
We have
\[\left(Q^{T} B_{\ell+1,i}Q\right)_{\alpha,\alpha}=\left( Q^{T} B_{\ell,\floor{\frac{i}{2}}}Q\right)_{\alpha,\alpha},\qquad \alpha\neq \ell+1,\]
\[\left( Q^{T} B_{\ell+1,i} Q\right)_{\ell+1,\ell+1}=\begin{cases}
\rho_{j,\mathcal{R}+1}, & \text{if } B_{\ell,\floor{\frac{i}{2}}}\notin E_{j}, i \text{ even},\\
\mathcal{R}+1, & \text{if } B_{\ell,\floor{\frac{i}{2}}}\notin E_{j}, i \text{ odd},\\
\left( Q^{T}B_{\ell,\floor{\frac{i}{2}}}Q\right)_{\ell+1,\ell+1}, & \text{if } B_{\ell,\floor{\frac{i}{2}}}\in E_{j}.
\end{cases}\]
Therefore, property \eqref{lamlem autovalores de Bli > l} holds for $\ell+1$.
Now fix $\ell, i$ such that $B_{\ell+1,i}\notin E_{j}$.
Then $B_{\ell+1,\floor{\frac{i}{2}}}\notin E_{j}$, property \eqref{lamlem autovalores de Bli leq l} holds for $\ell+1$, and
 \[\beta_{\ell+1,i}=\begin{cases}
 \beta_{\ell,\frac{i}{2}}+1 & \text{if } i \text{ is even}, \, \ell<a_0,\\
 \beta_{\ell,\frac{i}{2}} & \text{if } i \text{ is even}, \, \ell\geq a_0,\\
 \beta_{\ell,\frac{i-1}{2}} & \text{if } i \text{ is odd},
 \end{cases}\qquad
 \gamma_{\ell+1,i}=\begin{cases}
 \gamma_{\ell,\frac{i}{2}} & \text{if } i \text{ is even}, \, \ell<a_0,\\
 \gamma_{\ell,\frac{i}{2}}+1 & \text{if } i \text{ is even}, \, \ell\geq a_0,\\
 \gamma_{\ell,\frac{i-1}{2}} & \text{if } i \text{ is odd}.
 \end{cases}\]
Using \eqref{eq:b+g} we find that $\beta_{\ell+1,i}+\gamma_{\ell+1,i}\leq n-m+1$.
On the other hand, we have shown that
\[
 \sigma_{\alpha} (B_{\ell+1,i}) \in \{ \rho_{j,\mathcal{R}+1}, \mathcal{R}+1 , \sigma_{\ell+2} , \ldots, \sigma_n \} , \qquad \alpha = 1, \ldots, n .
\]
Thus, if we had $\beta_{\ell+1,i}+\gamma_{\ell+1,i}=n-m+1$ then, by \ref{lamlem sigma less R+1}) and \ref{lamlem epsilon less sigma}) we would get
\[
 \sigma_{\alpha} (B_{\ell+1,i}) = \rho_{j,\mathcal{R}+1} , \qquad \alpha = 1, \ldots, n - m + 1 .
\]
and by \ref{lamlem epsilon por sigma in Ej}), $B_{\ell+1,i}\in E_j$, which is a contradiction.
Therefore, \eqref{eq:b+g} holds for $\ell+1$.

Now let $i'\neq i$ be such that $B_{\ell+1,i'}\notin E_j$.
If $\floor{\frac{i}{2}}\neq \floor{\frac{i'}{2}}$, then $B_{\ell,\floor{\frac{i'}{2}}}\neq B_{\ell,\floor{\frac{i}{2}}}$, and, hence, $B_{\ell+1,i'}\neq B_{\ell+1,i}$, 
 whereas if $\floor{\frac{i}{2}}= \floor{\frac{i'}{2}}$, then $(B_{\ell+1,i'})_{\ell+1,\ell+1}\neq (B_{\ell+1,j})_{\ell+1,\ell+1}$, and, hence,
$B_{\ell+1,i'}\neq B_{\ell+1,i}$.

Now we bound $\lambda_{\ell+1,i}$.
Recall the notation \eqref{eq:UVW} and the induction hypothesis \eqref{eq:boundl}.
 If $i$ is even and $\ell<a_0$, we have $\max\{0,\ell+1-a_{0}-\gamma_{\ell,i}\}=0$ and, therefore,
\begin{align*}
\lambda_{\ell+1,i}&=\lambda_{\ell,\frac{i}{2}}\frac{\mathcal{R}+1-\sigma_{\ell+1}}{\mathcal{R}+1-\rho_{j,\mathcal{R}+1}}\leq \lambda_{\ell,\frac{i}{2}} \leq U^{\min\{a_{0},\ell\}-\beta_{\ell,\frac{i}{2}}} \, V^{\gamma_{\ell,\frac{i}{2}}} \, W^{\max\{0,\ell-a_{0}-\gamma_{\ell,\frac{i}{2}}\}}\\
&=U^{\min\{a_{0},\ell+1\}-\beta_{\ell+1,i}} \, V^{\gamma_{\ell+1,i}} \, W^{\max\{0,\ell+1-a_{0}-\gamma_{\ell+1,i}\}}.
 \end{align*}
 If $i$ is even and $\ell\geq a_0$, using \eqref{eq:sigma-r} and \eqref{rjR}, we have
 \[\frac{\mathcal{R}+1-\sigma_{\ell+1}}{\mathcal{R}+1-\rho_{j,\mathcal{R}+1}}\leq \frac{1+r_{j,\mathcal{R}}}{\mathcal{R}+1-\rho_{j,\mathcal{R}+1}}\leq \frac{1}{\max \{ 1, \mathcal{R}\}} ,
\]
therefore
\begin{align*}
\lambda_{\ell+1,i}&=\lambda_{\ell,\frac{i}{2}}\frac{\mathcal{R}+1-\sigma_{\ell+1}}{\mathcal{R}+1-\rho_{j,\mathcal{R}+1}}\leq \lambda_{\ell,\frac{i}{2}}\frac{1}{\max\{1,\mathcal{R}\}}
\leq U^{\min\{a_{0},\ell\}-\beta_{\ell,\frac{i}{2}}} \, V^{\gamma_{\ell,\frac{i}{2}}+1} \, W^{\max\{0,\ell-a_{0}-\gamma_{\ell,\frac{i}{2}}\}}\\
&=U^{\min\{a_{0},\ell+1\}-\beta_{\ell+1,i}} \, V^{\gamma_{\ell+1,i}} \, W^{\max\{0,\ell+1-a_{0}-\gamma_{\ell+1,i}\}}.
\end{align*}
If $i$ is odd and $\ell<a_0$, then, by \eqref{eq:sigma-r} and the definition of $r_{j,\mathcal{R}}$ and  $\rho_{j,\mathcal{R}}$, we have
 \[\frac{\sigma_{\ell+1}-\rho_{j,\mathcal{R}+1}}{\mathcal{R}+1-\rho_{j,\mathcal{R}+1}}\leq \frac{\rho_{j,\mathcal{R}}+r_{j,\mathcal{R}}-\rho_{j,\mathcal{R}+1}}{\mathcal{R}+1-\rho_{j,\mathcal{R}+1}}\leq \frac{\rho_{j,\mathcal{R}}+r_{j,\mathcal{R}}}{\mathcal{R}+1}\leq \frac{4\rho_{j,\mathcal{R}}}{3(\mathcal{R}+1)}\leq\frac{2^{-j}}{\left(\mathcal{R}+1\right)\max\lbrace 1, \mathcal{R}^{m-1}\rbrace}\]
and
 \begin{align*}
\lambda_{\ell+1,i}&=\lambda_{\ell,\frac{i-1}{2}}\frac{\sigma_{\ell+1}-\rho_{j,\mathcal{R}+1}}{\mathcal{R}+1-\rho_{j,\mathcal{R}+1}}\leq \lambda_{\ell,\frac{i-1}{2}} U 
\leq U^{\min\{a_{0},\ell\}-\beta_{\ell,\frac{i-1}{2}}+1} \, V^{\gamma_{\ell,\frac{i-1}{2}}} \, W^{\max\{0,\ell-a_{0}-\gamma_{\ell,\frac{i-1}{2}}\}}\\
&=U^{\min\{a_{0},\ell+1\}-\beta_{\ell+1,i}} \, V^{\gamma_{\ell+1,i}} \, W^{\max\{0,\ell+1-a_{0}-\gamma_{\ell+1,i}\}}.
 \end{align*}
Finally, if $i$ is odd and $\ell\geq a_0$ we have $\gamma_{\ell,i}\leq \ell-a_{0}$ for all $i=0,\ldots, 2^{\ell}-1$, and 
 \[\frac{\sigma_{\ell+1}-\rho_{j,\mathcal{R}+1}}{\mathcal{R}+1-\rho_{j,\mathcal{R}+1}}\leq \frac{\mathcal{R}+r_{j,\mathcal{R}}-\rho_{j,\mathcal{R}+1}}{\mathcal{R}+1-\rho_{j,\mathcal{R}+1}}\leq \frac{\mathcal{R}+r_{j,\mathcal{R}}}{\mathcal{R}+1},\]
 so
 \begin{align*}
\lambda_{\ell+1,i}&=\lambda_{\ell,\frac{i-1}{2}}\frac{\sigma_{\ell+1}-\rho_{j,\mathcal{R}+1}}{\mathcal{R}+1-\rho_{j,\mathcal{R}+1}}\leq \lambda_{\ell,\frac{i-1}{2}} W
\leq U^{\min\{a_{0},\ell\}-\beta_{\ell,\frac{i-1}{2}}} \, V^{\gamma_{\ell,\frac{i-1}{2}}} \, W^{\max\{0,\ell-a_{0}-\gamma_{\ell,\frac{i-1}{2}}\}+1}\\
&=U^{\min\{a_{0},\ell+1\}-\beta_{\ell+1,i}} \, V^{\gamma_{\ell+1,i}} \, W^{\max\{0,\ell+1-a_{0}-\gamma_{\ell+1,i}\}}.
\end{align*}
With this, we finish the inductive construction of the families \eqref{eq:Bl}.
In particular, for all $0\leq i\leq 2^{n}-1$, if $B_{n,i}\notin E_j$ we have
\begin{equation}\label{eq:lni}
 \lambda_{n,i}\leq U^{a_{0}-\beta_{n,i}} \, V^{\gamma_{n,i}} \, W^{n-a_{0}-\gamma_{n,i}},
\end{equation}
 \[\left(Q^{T}B_{n,i}Q\right)_{\alpha,\alpha}\in \{\rho_{j,\mathcal{R}+1},\mathcal{R}+1\}, \qquad \alpha=1,\ldots, n,\]
and
\[a:=\#\{\alpha: (Q^T B_{n,i}Q)_{\alpha,\alpha}=\rho_{j,\mathcal{R}+1}\}=\beta_{n,i}+\gamma_{n,i}\leq n-m.\]
Therefore, by definition of $E^{a}_{j,\mathcal{R}+1}$, we get
$B_{n,i}\in E^{a}_{j,\mathcal{R}+1}$.
Hence, for all $0\leq i\leq 2^{n}-1$, we have proved that
\[B_{n,i}\in \bigcup_{a=0}^{n-m}E^{a}_{j,\mathcal{R}+1}\cup E_{j}.\]
We define 
\begin{equation*}
\nu=\sum_{i=0}^{2^{n}-1}\lambda_{n,i}\delta_{B_{n,i}},
\end{equation*}
which is a laminate by \eqref{eq:inductionBl}.

In order to estimate $\nu (E_{j,\mathcal{R}+1}^{a} )$, we observe that for $B_{n,i}\in E_{j,\mathcal{R}+1}^{a}$ we have $\max\lbrace 0,a_{0}+a-n \rbrace\leq \beta_{n,i}\leq \min\lbrace a_{0},a \rbrace$. Therefore
\begin{align*}
\nu\left(E_{j,\mathcal{R}+1}^{a}\right)&=\sum_{\substack{i: \beta_{n,i}+\gamma_{n,i}=a \\ B_{n,i}\in E_{j,\mathcal{R}+1}^{a}}}\lambda_{n,i}=\sum_{b=\max\lbrace 0,a_{0}+a-n \rbrace}^{\min\lbrace a_{0},a \rbrace} \: \sum_{\substack{i: \beta_{n,i}=b,\: \gamma_{n,i}=a-b \\ B_{n,i}\in E_{j,\mathcal{R}+1}^{a}}}\lambda_{n,i}\\
&\leq \sum_{b=\max\lbrace 0,a_{0}+a-n \rbrace}^{\min\lbrace a_{0},a \rbrace}\binom{a_{0}}{b}\binom{n-a_{0}}{a-b} U^{a_{0}-b} \, V^{a-b} \, W^{n-a_{0}-a+b},
\end{align*}
where we have used that the $B_{n,i}$ ($0 \leq i \leq 2^n -1$) in $E_{j,\mathcal{R}+1}^{a}$ are all different, as well as estimate \eqref{eq:lni}.
This concludes the proof.
\end{proof}

The following result constructs a function whose gradient approximates the laminate of the previous lemma and have the desired integrability. 
\begin{lem}\label{tau lemma}
Let $\alpha\in(0,1)$ and $\delta>0$. Then is a $j_1\in \N$ such that for any $j\geq j_{1}$, any bounded open set $\omega\subset \R^n$ and any $F\in \Gamma_{+}$ such that $\dist(F,\bigcup_{a=0}^{n-m}E_{j,|F|}^{a})<r_{j,|F|}$, there exists a piecewise affine homeomorphism $f\in W^{1,1}(\omega,F\omega)\cap C^{\alpha}(\overline{\omega},\overline{F\omega})$ such that
\begin{enumerate}[i)]
\item\label{tau border} $f(x)=Fx$ for all  $x\in\partial \omega,$
\item\label{tau C alpha} $\|f-F\|_{C^{\alpha}(\overline{\omega})}<\delta$ and $\|f^{-1}-F^{-1}\|_{C^{\alpha}(\overline{F\omega})}<\delta,$
\item\label{tau dist} $ D f(x)\in E_{j}$ a.e.\ $x\in \omega,$
\item\label{tau regularity} for all $t>0$,
\begin{equation*}
\frac{|\{x\in\omega:| D f(x)|>t\}|}{|\omega|}\lesssim |F|^m t^{-m}.
\end{equation*}

\end{enumerate}
\end{lem}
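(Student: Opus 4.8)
The plan is to iterate Lemma~\ref{lamlem} to build a sequence of laminates $\nu^{(s)}$ whose barycenters are all $F$ and whose supports are pushed ever closer to the set $E_j$, and then apply Proposition~\ref{laminate-homeo} together with Lemma~\ref{glue homeomorphisms} at each step to realize these laminates as gradients of piecewise affine homeomorphisms. First I would set $\mathcal{R}_0:=|F|$ and $a_0:=$ the integer witnessing $\dist(F,E^{a_0}_{j,|F|})<r_{j,|F|}$, so that $F$ satisfies the hypothesis of Lemma~\ref{lamlem} with $\mathcal{R}=\mathcal{R}_0$. Lemma~\ref{lamlem} then produces a laminate $\nu$ with $\overline{\nu}=F$, supported in $(\bigcup_a E^a_{j,\mathcal{R}_0+1}\cup E_j)\cap\{|\xi|\le\mathcal{R}_0+1\}$. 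Each atom of $\nu$ lying in some $E^a_{j,\mathcal{R}_0+1}$ is a matrix $B$ with $\dist(B,E^a_{j,|B|})=0<r_{j,|B|}$ (here $|B|=\mathcal{R}_0+1$), so the hypothesis of Lemma~\ref{lamlem} is met again for each such atom, now with $\mathcal{R}=\mathcal{R}_0+1$; the atoms already in $E_j$ are left alone. Iterating, after $s$ steps one has a laminate $\nu^{(s)}$ with $\overline{\nu^{(s)}}=F$, supported in $(\bigcup_a E^a_{j,\mathcal{R}_0+s}\cup E_j)\cap\{|\xi|\le\mathcal{R}_0+s\}$, and with $\nu^{(s)}(E^a_{j,\mathcal{R}_0+s})$ controlled by an $s$-fold product of the constants $C(j,\mathcal{R},a_0,a)$ from~\eqref{C(j,R,a0,a)}. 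As in Section~\ref{sect: sketch of the proof}, the point of the precise form of $C(j,\mathcal{R},a_0,a)$ and of the choice of $r_{j,\mathcal{R}}$ in~\eqref{rjR} (in particular the factor $\tfrac1{2^n}(\mathcal{R}+1)^{-m-1}$) is that these products are summable: one obtains a uniform-in-$s$ bound $\nu^{(s)}(\{|\xi|>t\})\lesssim |F|^m t^{-m}$ by the same three-regime induction on $t$ ($t\ge\mathcal{R}_0+s+1$, $t<\mathcal{R}_0+s$, and the intermediate band) carried out in the sketch, and $\nu^{(s)}\to\nu_\infty$ weak\textsuperscript{*} with $\nu_\infty$ supported in $E_j$.

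On the function side, I would realize the iteration geometrically. Start with $f_0:=$ the affine map $x\mapsto Fx$ on $\omega$. Given $f_s$, piecewise affine on a decomposition $\{\omega_i^{(s)}\}$ with $Df_s\equiv B_i\in\bigcup_aE^a_{j,\mathcal{R}_0+s}$ on $\omega_i^{(s)}$ (plus a part where $Df_s\in E_j$, which is frozen forever), apply Proposition~\ref{laminate-homeo} on each $\omega_i^{(s)}$ to the laminate produced by Lemma~\ref{lamlem} with barycenter $B_i$: this is legitimate because every atom of that laminate lies in $\Gamma_+$ with $\rho_{j,\mathcal{R}}I\le (\text{atom})\le (\mathcal{R}+1)I$, so $L:=(\mathcal{R}_0+s+1)\max\{1,\rho_{j,\mathcal{R}_0+s+1}^{-1}\}$ serves in the proposition. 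Proposition~\ref{laminate-homeo}(\ref{frontera}) gives agreement with $f_s$ on $\partial\omega_i^{(s)}$, so Lemma~\ref{glue homeomorphisms} glues the pieces into a new piecewise affine homeomorphism $f_{s+1}:\overline\omega\to\overline{F\omega}$ with $f_{s+1}=f_s$ off $\bigcup_i\omega_i^{(s)}$. Part~(\ref{gradient f}) of the proposition guarantees the measure bookkeeping $|\{Df_{s+1}=B\}|=\nu^{(s+1)}(\{B\})\,|\omega|$, so the weak-$L^m$ bound for $\nu^{(s)}$ transfers verbatim to $Df_{s+1}$, giving~(\ref{tau regularity}) uniformly in $s$. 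Part~(\ref{tau border}) is preserved by gluing since the boundary datum is never touched.

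The remaining task — and the main obstacle — is to keep the $C^\alpha$ norms of $f_s-F$ and $f_s^{-1}-F^{-1}$ from blowing up along the iteration, and to pass to the limit. By the bounds in Lemma~\ref{glue homeomorphisms} and Proposition~\ref{laminate-homeo}(\ref{alpha f})--(\ref{alpha f inv}), the increment $\|f_{s+1}-f_s\|_{C^\alpha(\overline\omega)}$ is at most $2\sup_i\delta_i^{(s)}$ where $\delta_i^{(s)}$ is the free small parameter in the proposition applied on $\omega_i^{(s)}$, subject only to $\delta_i^{(s)}<\tfrac12\min\{L^{-1},\min_{p\ne q}|B_p-B_q|\}$; so I would simply choose $\delta_i^{(s)}$ so small that $\sum_s 2\sup_i\delta_i^{(s)}<\delta/2$, and likewise for the inverses. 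This makes $(f_s)$ Cauchy in $C^\alpha$ and, because the inverse estimates are uniform, $(f_s^{-1})$ Cauchy as well, so $f_s\to f$ in $C^\alpha(\overline\omega)$ with $f^{-1}_s\to f^{-1}$, yielding a $C^\alpha$ homeomorphism satisfying~(\ref{tau border}) and~(\ref{tau C alpha}). To get~(\ref{tau dist}), note that on the ever-growing frozen region $Df\in E_j$ already, and the non-frozen region at stage $s$ has measure $\le|\omega|\sum_a\nu^{(s)}(E^a_{j,\mathcal{R}_0+s})\to0$, while there $|Df_s|$ stays bounded; a little care (e.g. extracting the a.e.\ limit of $Df_s$, using that $Df_s$ converges in $L^1$ because the maps converge uniformly and the gradients are uniformly weak-$L^m$ hence equiintegrable on the shrinking bad set) shows $Df\in E_j$ a.e. Finally $f\in W^{1,1}$ and piecewise affine are inherited because $E_j$ is bounded so the $f_s$ are uniformly Lipschitz on $\omega$ minus a null set... more precisely, since $Df_s$ is uniformly bounded in $L^{m,w}\subset L^1$ and $f_s\to f$ uniformly, $f\in W^{1,1}(\omega,F\omega)$; the piecewise affine structure of $f$ is the one inherited from the (countable) union of the decompositions. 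The choice $j\ge j_1$ enters only to make the numbers in~\eqref{rjR} and the summability estimate work out with room to spare, exactly as the analogous smallness was used in the sketch.
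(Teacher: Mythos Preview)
Your overall architecture matches the paper's: iterate Lemma~\ref{lamlem} on each affine piece, realize each step by Proposition~\ref{laminate-homeo}, glue by Lemma~\ref{glue homeomorphisms}, choose the $\delta$'s summable for $C^\alpha$ control, and pass to the limit. Two points, however, are real gaps rather than expository shortcuts.

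First, the ``measure bookkeeping'' you invoke is not available in the form you state. Proposition~\ref{laminate-homeo}\emph{(\ref{gradient f})} gives only $|\{x:|Df(x)-A_i|<\delta\}|=\alpha_i|\Omega|$, not $Df=A_i$ on a set of that measure. Hence after one step the actual gradients $B_i$ on the pieces are merely $\delta$-close to atoms of the abstract laminate, and at the next step Lemma~\ref{lamlem} is applied to these perturbed $B_i$, not to the atoms of your $\nu^{(s)}$. The distribution of $Df_s$ is therefore \emph{not} $\nu^{(s)}$, and the clean ``three-regime'' argument from Section~\ref{sect: sketch of the proof} does not transfer verbatim. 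The paper deals with this by working throughout with the open neighbourhoods $S^a_{j,\mathcal{R}}=\{M:\dist(M,E^a_{j,\mathcal{R}})<r_{j,\mathcal{R}}\}$ and tracking the measures $|\omega^a_k|=|\{x:Df_k(x)\in S^a_{j,\tilde k}\}|$ directly; the point is that the bound $\nu(E^a_{j,\mathcal{R}+1})\le C(j,\mathcal{R},a_0,a)$ in Lemma~\ref{lamlem} holds for \emph{every} $A$ with $\dist(A,E^{a_0}_{j,\mathcal{R}})<r_{j,\mathcal{R}}$, so the perturbation is already absorbed in $C(j,\mathcal{R},a_0,a)$.

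Second, and more substantially, you wave past the place where the proof actually lives: the inductive control of $|\omega^a_k|$. One needs bounds of the shape $|\omega^a_k|/|\omega|\lesssim \tilde k^{\,\frac12+a-n}$ for $a<n-m$ and $|\omega^{n-m}_k|/|\omega|\lesssim \tilde k^{-m}$, and propagating these through the recursion $|\omega^b_{k+1}|\le\sum_a C(j,\tilde k,a,b)\,|\omega^a_k|$ requires expanding $C(j,\tilde k,a,b)$ from~\eqref{C(j,R,a0,a)} and checking two nontrivial inequalities (the paper's \eqref{b<n-m,k+1}--\eqref{b=n-m,k+1}). This is where $j_1$ genuinely enters: the leading term in the $b<n-m$ case is $(\tilde k/(\tilde k+1))^{1/2}$ plus an error of size $c(n)2^{-j}\tilde k^{-1}$, and one needs $j\ge j_1$ to make the total $\le 1$; it is not ``only to make~\eqref{rjR} work''. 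The sketch in Section~\ref{sect: sketch of the proof} treats the exact sets $S^k_i$ with no $j$ at all, so it cannot be quoted here.

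A small slip: $E_j$ is not bounded (only $\sigma_1,\dots,\sigma_{n-m+1}$ are controlled; $|A|$ is unrestricted above), so your sentence ``$E_j$ is bounded so the $f_s$ are uniformly Lipschitz'' is false. The paper gets $f\in W^{1,p}$ for $p<m$ from the stabilization $f_{k+1}=f_k$ off $\omega_k$ together with $|Df_k|\le\tilde k+1$ on $\omega\setminus\omega_k$ and $|\omega_k|\lesssim\tilde k^{-m}$, by a layer-cake computation.
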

\begin{proof}
Let $\mathcal{R}=|F|$, $a_0\in\{0,\ldots,n-m\}$, $Q\in SO(n)$ and
\[A=Q\diag\,\left(\underbrace{\rho_{j,\mathcal{R}},\ldots,\rho_{j,\mathcal{R}},}_{a_{0}}\underbrace{\mathcal{R},\ldots,\mathcal{R}}_{n-a_{0}}\right)Q^{T}\in E_{j,\mathcal{R}}^{a_{0}}\]
be such that $| F-A |<r_{j,\mathcal{R}}$, and for $a=0,\ldots, n-m$, define the sets 
\[ S_{j,\mathcal{R}}^{a}:= \left\lbrace M\in \Gamma_{+}: \dist \left( M, E_{j,\mathcal{R}}^{a}\right) < r_{j, \mathcal{R}}\right\rbrace . \]
Note that the sets $S_{j,\mathcal{R}}^{0}, \ldots, S_{j,\mathcal{R}}^{n-m}$ are pairwise disjoint.
Indeed, if $S_{j,\mathcal{R}}^{a_1} \cap S_{j,\mathcal{R}}^{a_2} \neq \emptyset$ for some $a_1 \neq a_2$ we would obtain, thanks to inequality \eqref{eq:singvalues}, $|\mathcal{R} - \rho_{j,\mathcal{R}}| < 2 r_{j, \mathcal{R}}$, which contradicts the definition of $r_{j, \mathcal{R}}$.

Given $k\in \N$, we define $\tilde{k}=k+\mathcal{R}-1$.
We will construct by induction a sequence $\lbrace f_{k}\rbrace_{k \in \N}$ of piecewise affine homeomorphisms such that
\begin{enumerate}[\it (a)]
\item\label{tau f_k border} $f_k(x)=Fx $ for all  $x\in\partial \omega$.
\item\label{tau f_k C alpha} $\|f_k-f_{k-1}\|_{C^{\alpha}(\overline{\omega})}<2^{-k}\delta$ and $\|f_k^{-1}-f_{k-1}^{-1}\|_{C^{\alpha}(\overline{F\omega})}<2^{-k}\delta$.
\item\label{tau f_k dist kI} $ D f_{k}(x)\in E_{j}\cup\bigcup_{a=0}^{n-m}S_{j,\tilde{k}}^{a}$ for a.e.\ $x\in \omega$.
\item\label{tau bound of Dfk} $\left| Df_k \right| <\tilde{k}+1$ in $\omega\setminus \omega_k$, with $\omega_{k}:=\bigcup_{a=0}^{n-m}\omega_{k}^{a}$ and 
\[\omega_{k}^{a}:=\lbrace x\in\omega : f_{k} \text{ is affine in a neighbourhood of } x \text{ and } D f_{k}(x) \in S_{j,\tilde{k}}^{a} \rbrace,\quad 0 \leq a \leq n-m.\]
\item\label{tau bounds Om_k 1} There exists $j_1\in \N$ such that for any $j\geq j_{1}$ we have
\[
\frac{|\omega_{k}^{a}|}{|\omega|}\lesssim  \tilde{k}^{\frac{1}{2}+a-n}\quad\text{for } 0\leq a\leq n-m-1, \quad \text{and} \quad \frac{|\omega_{k}^{n-m}|}{|\omega|}\lesssim \tilde{k}^{-m} \sum_{d=1}^{k} \tilde{d}^{-\frac{4}{3}}.
\]

\item\label{omegak} $\omega_k \supset \omega_{k+1}$ and $f_{k+1} |_{\omega \setminus \omega_k} = f_k |_{\omega \setminus \omega_k}$.
\end{enumerate}

Note that the sets $\omega_{k}^{a}$ defined in \emph{(\ref{tau bound of Dfk})} are open and pairwise disjoint, since so are $S_{j,\mathcal{R}}^{a}$.
Recall also that $\tilde{d}$ stands for $d+ \mathcal{R} -1$.

For $k=0,1$ we see that the choices $f_0(x) = f_1(x) = Fx$, $\omega_{0}^{a_{0}} = \omega_{1}^{a_{0}}=\omega$ and $\omega_{0}^{a} = \omega_{1}^{a} = \emptyset$ for $a\neq a_0$ satisfy all the assumptions.

Fix $k\in \N$ and assume $f_k$ has been constructed. We obtain $f_{k+1}$ by modifying $f_{k}$ on the sets $\omega_{k}^{a}$. Since $f_{k}$ is piecewise affine, there exists a family $\{ \omega_i\}_{i \in \N} \subset \omega$ of pairwise disjoint open sets such that $|\omega \setminus \bigcup_{i \in \N} \omega_i| = 0$ and $f|_{\omega_i}$ is affine for each $i \in \N$.
More precisely, fix $a\in\{0,\ldots,n-m\}$ and define $\omega_{k,i}^{a} := \omega_i \cap \omega_k^a$ for each $i \in \N$, which is an open set.
From now on, we only deal with those $\omega_{k,i}^{a}$ that are non-empty.
Then there exist families $\{ A_{k,i}^{a} \}_{i \in \N}\subset S_{j,\tilde{k}}^{a}$ and $\{ b_{k,i}^{a} \}_{i \in \N} \subset \R^n$ such that $f_{k}(x)=A_{k,i}^{a} x+b_{k,i}^{a}$ for $x \in\omega_{k,i}^{a}$.

Let $\nu_{A_{k,i}^{a}}$ be the laminate of Lemma \ref{lamlem} that satisfies $\overline{\nu_{A_{k,i}^{a}}}=A_{k,i}^{a}$,
 \[\supp \nu_{A_{k,i}^{a}}\subset \left(\bigcup_{b=0}^{n-m}E_{j,\tilde{k}+1}^{b}\cup E_j \right)\cap \lbrace \xi\in\R^{n\times n}: |\xi|\leq \tilde{k}+1 \rbrace\]
and for $0\leq b\leq n-m$,
\[\nu_{A_{k,i}^{a}}\left(E_{j,\tilde{k}+1}^{b}\right)\leq C(j,\tilde{k},a,b).\]
We apply Proposition \ref{laminate-homeo} to that laminate and obtain a piecewise affine homeomorphism $g_{k,i}^{a}:\omega_{k,i}^{a}\to A_{k,i}^{a}\omega_{k,i}^{a}+b_{k,i}^{a}$ with
\begin{enumerate}[\it (a)]
\setcounter{enumi}{6}
\item\label{tau g_i border} $g_{k,i}^{a}(x)=A_{k,i}^{a} x+b_{k,i}^{a}$ on $\partial \omega_{k,i}^{a}$.
\item\label{tau nabla g_i} $| D g_{k,i}^{a}(x)|< \tilde{k}+2$ a.e.\ in $\omega_{k,i}^{a}$.
\item\label{tau g_i C alpha} $\|g_{k,i}^{a}-f_{k}\|_{C^{\alpha}(\overline{\omega_{k,i}^{a}})}< 2^{-k-2}\delta$ and $\|\big( g_{k,i}^{a} \big)^{-1}-f_{k}^{-1}\|_{C^{\alpha}(A_{k,i}^{a}\overline{\omega_{k,i}^{a}}+b_{k,i}^{a})} < 2^{-k-2}\delta$.
\item\label{tau dist g_i kI} $ D g_{k,i}^{a}(x)\in E_{j}\cup\bigcup_{b=0}^{n-m}S_{j,\tilde{k}+1}^{b}$ a.e.\ in $\omega_{k,i}^{a}$.
\item\label{tau bounds dist(g_i,(m+1)I)}
$\displaystyle \left|\lbrace x\in\omega_{k,i}^{a}: D g_{k,i}^{a}(x)\in S_{j,\tilde{k}+1}^{b}\rbrace \right| \leq C(j,\tilde{k},a,b) |\omega_{k,i}^{a}|.$
\end{enumerate}
In property \emph{(\ref{tau dist g_i kI})} we have used that $E_j$ is open.
We define the piecewise affine function
\[f_{k+1} (x) =\begin{cases}
f_{k} (x) & \text{if } x \in \overline{\omega} \setminus\bigcup_{i=1}^{\infty}\bigcup_{a=0}^{n-m}\omega_{k,i}^{a} , \\
g_{k,i}^{a} (x) & \text{if } x \in \omega_{k,i}^{a} \text{ for some } i \in \N \text{ and } a \in \{ 0, \ldots, n-m \},
\end{cases}\]
which is a homeomorphism due to Lemma \ref{glue homeomorphisms}.
Property \emph{(\ref{tau f_k border})} holds for $k+1$ since $f_{k+1} = f_k$ on $\partial \omega$.
Property \emph{(\ref{tau f_k C alpha})} holds for $k+1$ thanks to  \emph{(\ref{tau g_i C alpha})} and Lemma \ref{glue homeomorphisms}.
Property \emph{(\ref{omegak})} for $k+1$ follows easily from the construction.
Property \emph{(\ref{tau bound of Dfk})} for $k+1$ follows from \emph{(\ref{tau nabla g_i})} and \emph{(\ref{omegak})}.
Property \emph{(\ref{tau f_k dist kI})} for $k+1$ follows from \emph{(\ref{tau dist g_i kI})} and \emph{(\ref{omegak})}.
Finally, we have to prove \emph{(\ref{tau bounds Om_k 1})} for $k+1$. 

By definition of $f_{k+1}$, we have that, up to a set of measure zero,
\[
 \omega_{k+1}^{b}=\bigcup_{a=0}^{n-m}\bigcup_{i=1}^{\infty}\left\lbrace x\in\omega_{k,i}^{a} : D g_{k,i}^{a}(x)\in S_{j,\tilde{k}+1}^{b} \right\rbrace
\]
with disjoint union, hence for $b=0,\ldots,n-m$,
\begin{align*}
\frac{|\omega_{k+1}^{b}|}{|\omega|}&= \sum_{a=0}^{n-m}\sum_{i=1}^{\infty}\frac{|\omega_{k,i}^{a}|}{|\omega|}\frac{\left|\left\lbrace x\in\omega_{k,i}^{a} : D g_{k,i}^{a}(x)\in  S_{j,\tilde{k}+1}^{b} \right\rbrace\right|}{|\omega_{k,i}^{a}|}\leq \sum_{a=0}^{n-m}\frac{|\omega_{k}^{a}|}{|\omega|}C(j,\tilde{k},a,b) \\
&\lesssim\sum_{a=0}^{n-m-1}\tilde{k}^{\frac{1}{2}+a-n}C(j,\tilde{k},a,b)+ C(j,\tilde{k},n-m,b)\tilde{k}^{-m} \sum_{d=1}^{k} \tilde{d}^{-\frac{4}{3}},
\end{align*}
where we have used \emph{(\ref{tau bounds dist(g_i,(m+1)I)})} and \emph{(\ref{tau bounds Om_k 1})}.
So, in order to prove \emph{(\ref{tau bounds Om_k 1})} for $k+1$ it is enough to show that there exist $j_0\in\N$ and $k_{0}\in\N$ such that if $k\geq k_{0}$ and $j\geq j_0$ then
\begin{equation}\label{b<n-m,k+1}
\sum_{a=0}^{n-m-1}\tilde{k}^{\frac{1}{2}+a-n}C(j,\tilde{k},a,b)+ C(j,\tilde{k},n-m,b)\tilde{k}^{-m} \sum_{d=1}^{k} \tilde{d}^{-\frac{4}{3}}\leq \left(\tilde{k}+1\right)^{\frac{1}{2}+b-n}
\end{equation}
for $0\leq b\leq n-m-1$, and
\begin{equation}\label{b=n-m,k+1}
\sum_{a=0}^{n-m-1}\tilde{k}^{\frac{1}{2}+a-n}C(j,\tilde{k},a,n-m)+ C(j,\tilde{k},n-m,n-m)\tilde{k}^{-m} \sum_{d=1}^{k} \tilde{d}^{-\frac{4}{3}}\leq \left(\tilde{k}+1\right)^{-m} \sum_{d=1}^{k+1} \tilde{d}^{-\frac{4}{3}}.
\end{equation}
Recall from \eqref{C(j,R,a0,a)} that, for $k\geq 2$, 
\[
C(j,\tilde{k},a,b)= (\tilde{k}+1)^{-n+b} \! \sum_{\ell=\max\lbrace 0,a+b-n \rbrace}^{\min\lbrace a,b \rbrace}\binom{a}{\ell}\binom{n-a}{b-\ell} \, ( \tilde{k}+r_{j,\tilde{k}} )^{n-a-b+\ell} \; \tilde{k}^{m (\ell-a) + a - b} \; 2^{-j (a-\ell)} .
\]
Using the inequality $(\tilde{k}+r_{j,\tilde{k}})^{n-a-b+\ell} \leq \tilde{k}^{n-a-b+\ell} + 2^n \tilde{k}^{n-a-b+\ell-1} r_{j,\tilde{k}}$, we find that
\begin{multline*}
C(j,\tilde{k},a,b) \leq \\
 (\tilde{k}+1)^{b-n} \! \sum_{\ell=\max\lbrace 0,a+b-n \rbrace}^{\min\lbrace a,b \rbrace} \! \binom{a}{\ell}\binom{n-a}{b-\ell} \tilde{k}^{n + m(\ell-a) + \ell -2b} \left[1 + 2^{n} r_{j,\tilde{k}} \tilde{k}^{-1} \right] 2^{-j (a-\ell)} .
\end{multline*}
We will also use the cruder inequality
\[
 C(j,\tilde{k},a,b) \lesssim (\tilde{k}+1)^{b-n} \, \tilde{k}^{n + \min \{a,b\} (m+1) -a m  -2b} .
\]

In order to show inequalities \eqref{b<n-m,k+1} and \eqref{b=n-m,k+1}, we estimate $C(j,\tilde{k},a,b)$ according to whether $a$ or $b$ equal $n-m$ or are less than it.
We first observe that for $a,b\in\{0,\ldots,n-m\}$ and $\ell\in\{0,\ldots,\min\{a,b\}\}$  we have
\[
 \begin{cases}
 m(\ell-a)+\ell+a-2b =0 & \text{if } a=b=\ell, \\
m(\ell-a)+\ell+a-2b \leq -1 & \text{otherwise}.
\end{cases}
\]
In the case $0\leq b\leq n-m-1$ we have that exists a constant $c_1(n)$ depending on $n$ such that, for $k\geq 2$,
\begin{align*}
& \left(\tilde{k}+1\right)^{n-b}\sum_{a=0}^{n-m-1}\tilde{k}^{\frac{1}{2}+a-n}C(j,\tilde{k},a,b)\\
& \leq \sum_{a=0}^{n-m-1} \sum_{\ell=\max\lbrace 0,a+b-n \rbrace}^{\min\lbrace a,b \rbrace}\binom{a}{\ell} \binom{n-a}{b-\ell} \tilde{k}^{m(\ell-a)+\ell+a-2b} \left[\tilde{k}^{\frac{1}{2}}+2^{n}r_{j,\tilde{k}}\tilde{k}^{-\frac{1}{2}}\right] 2^{-j (a-\ell)} \\
& \leq \tilde{k}^{\frac{1}{2}}+c_1(n)\frac{2^{-j}}{\tilde{k}^{\frac{1}{2}}} ,
\end{align*}
so
\begin{equation}\label{a<n-m,b<n-m}
\left(\tilde{k}+1\right)^{n-\frac{1}{2}-b}\sum_{a=0}^{n-m-1}\tilde{k}^{\frac{1}{2}+a-n}C(j,\tilde{k},a,b) \leq \left( \frac{\tilde{k}}{\tilde{k}+1}\right)^{\frac{1}{2}}+c_1(n)\frac{2^{-j}}{\tilde{k}^{\frac{1}{2}}(\tilde{k}+1)^{\frac{1}{2}}} ,
\end{equation}
whereas
\[
C(j,\tilde{k},n-m,b)\tilde{k}^{-m} \left(\tilde{k}+1\right)^{n-b} \lesssim 
 \tilde{k}^{n + b (m+1) -(n-m) m  -2b-m} \leq \tilde{k}^{1-m} \leq \tilde{k}^{-1} 
\]
so
\begin{equation}\label{b<n-m,c(n)}
 C(j,\tilde{k},n-m,b)\tilde{k}^{-m} \sum_{d=1}^{k} \tilde{d}^{-\frac{4}{3}}\left(\tilde{k}+1\right)^{n-\frac{1}{2}-b} \leq c_1(n)\tilde{k}^{-\frac{3}{2}}.
\end{equation}
Given the previous constant $c_1(n)$, let $j_1\in\N$ be such that for all $j\geq j_1$ and $k \geq 2,$
\begin{equation}\label{c(n)j1}
 \left( \frac{\tilde{k}}{\tilde{k}+1}\right)^{\frac{1}{2}}+c_1(n)\left(\frac{2^{-j}}{\tilde{k}^{\frac{1}{2}}(\tilde{k}+1)^{\frac{1}{2}}}+\tilde{k}^{-\frac{3}{2}}\right)\leq 1.
\end{equation}
Then using (\ref{a<n-m,b<n-m}), (\ref{b<n-m,c(n)}) and (\ref{c(n)j1})
\begin{align*}
&\left(\tilde{k}+1\right)^{n-\frac{1}{2}-b}\left[\sum_{a=0}^{n-m-1}\tilde{k}^{\frac{1}{2}+a-n}C(j,\tilde{k},a,b)+ C(j,\tilde{k},n-m,b)\tilde{k}^{-m} \sum_{d=1}^{k} \tilde{d}^{-\frac{4}{3}}\right]\\
&\leq \left( \frac{\tilde{k}}{\tilde{k}+1}\right)^{\frac{1}{2}}+c_1(n)\left(\frac{2^{-j}}{\tilde{k}^{\frac{1}{2}}(\tilde{k}+1)^{\frac{1}{2}}}+\tilde{k}^{-\frac{3}{2}}\right)\leq 1 ,
\end{align*}
which proves \eqref{b<n-m,k+1}.
In the case $b=n-m$,
\[
 \left( \tilde{k}+1 \right)^{m} C(j,\tilde{k},a,n-m) \lesssim \tilde{k}^{2m + a -n} ,
\]
so
\[
 \sum_{a=0}^{n-m-1}\tilde{k}^{\frac{1}{2}+a-n}C(j,\tilde{k},a,n-m)\left( \tilde{k}+1 \right)^{m} \lesssim \sum_{a=0}^{n-m-1} \tilde{k}^{\frac{1}{2}+2 (m+a-n)} \lesssim \tilde{k}^{-\frac{3}{2}}
\]
and, hence, there exists a constant $c_2(n)$ such that
\begin{equation}\label{sum,c(n)}
 \sum_{a=0}^{n-m-1}\tilde{k}^{\frac{1}{2}+a-n}C(j,\tilde{k},a,n-m)\left( \tilde{k}+1 \right)^{m} \leq c_2(n)\tilde{k}^{-\frac{3}{2}}.
\end{equation}
Recall that
\[2^{n}r_{j,\tilde{k}}\leq \frac{1}{2}\left(\tilde{k}+1\right)^{-m-1}.\]
Then, splitting the following sum in the case $\ell=n-m$ and the case $\ell<n-m$, we have
\begin{equation}\label{sum,k+1}
\begin{split}
 & C(j,\tilde{k},n-m,n-m)\tilde{k}^{-m} (\tilde{k}+1)^{m} \\
 & \leq \sum_{\ell=\max\lbrace 0,n-2m \rbrace}^{n-m}\binom{n-m}{\ell}\binom{m}{n-m-\ell} \tilde{k}^{-n + m (\ell -n+m+1) + \ell} \left[ 1 + 2^{n} r_{j,\tilde{k}} \tilde{k}^{-1} \right] \\
 & \leq 1+ c_2 (n) \tilde{k}^{-m-1} .
\end{split}
\end{equation}
In addition, there exists $k_0\in\N$ such that for $k\geq k_{0}$
 \begin{equation}\label{c(n,k+1)}
 c_2(n) \left( \tilde{k}^{-\frac{3}{2}} +  \tilde{k}^{-m-1} \sum_{d=1}^{\infty} \tilde{d}^{-\frac{4}{3}} \right) \leq \left(\tilde{k}+1\right)^{-\frac{4}{3}}.
 \end{equation}
Therefore, for $j\in\N$ and $k\geq k_0$ we use \eqref{sum,c(n)}, \eqref{sum,k+1} and \eqref{c(n,k+1)} to get
\begin{align*}
&(\tilde{k}+1)^{m}\left[\sum_{a=0}^{n-m-1}\tilde{k}^{\frac{1}{2}+a-n}C(j,\tilde{k},a,n-m)+ C(j,\tilde{k},n-m,n-m)\tilde{k}^{-m} \sum_{d=1}^{k} \tilde{d}^{-\frac{4}{3}}\right]\\
&\leq c_2(n)\tilde{k}^{-\frac{3}{2}}+ \sum_{d=1}^{k} \tilde{d}^{-\frac{4}{3}}\left( 1+ c_2 (n) \tilde{k}^{-m-1} \right)\leq \sum_{d=1}^{k+1} \tilde{d}^{-\frac{4}{3}}.
\end{align*}
This proves (\ref{b=n-m,k+1}) and the construction of $\{f_k\}_{k \in \N}$ is finished.

From \emph{(\ref{tau bounds Om_k 1})} we obtain
\begin{equation}\label{tau size Om_k}
\frac{|\omega_{k}|}{|\omega|}= \sum_{a=0}^{n-m} \frac{|\omega_{k}^{a}|}{|\omega|}\lesssim\sum_{a=0}^{n-m-1} \tilde{k}^{\frac{1}{2}+a-n}+ \tilde{k}^{-m} \sum_{d=1}^{k} \tilde{d}^{-\frac{4}{3}} \lesssim  \tilde{k}^{-m}.
\end{equation}
By \emph{(\ref{tau f_k C alpha})}, the sequences $\{f_k\}_{k=1}^{\infty}$ and $\{f_{k}^{-1}\}_{k=1}^{\infty}$ converge in the $C^{\alpha}$ norm.
We define $f$ as the limit of $f_{k}$.
Thanks to the uniform convergence, the limit of $f^{-1}_{k}$ is the inverse of $f$.
Thus, $f$ is a homeomorphism.
In addition, $f$ is piecewise affine.
To check this, we see from \emph{(\ref{omegak})} that $f_{k+1} = f_k \chi_{\omega \setminus \omega_k} + g_k \chi_{\omega_k}$ for a certain $g_k : \omega_k \to \R^n$ piecewise affine.
Thus, $f_{k+1} = \sum_{i=1}^k g_i \chi_{\omega_i \setminus \omega_{i+1}}$, so $f = \sum_{i=1}^{\infty} g_i \chi_{\omega_i \setminus \omega_{i+1}}$, which shows that $f$ is piecewise affine.
Moreover, $Df_{k+1} = \sum_{i=1}^k D g_i \chi_{\omega_i \setminus \omega_{i+1}}$, so for any $p \in (1,m)$, thanks to \emph{(\ref{tau nabla g_i})} and \eqref{tau size Om_k},
\begin{align*}
 & \int_{\omega} |Df_{k+1}|^p \leq \sum_{i=1}^k (\tilde{i} +2)^p \left( |\omega_i| - |\omega_{i+1}| \right) \lesssim \sum_{i=1}^k i^p \left( |\omega_i| - |\omega_{i+1}| \right)\\
  & = \sum_{i=1}^k \left[ i^p - (i-1)^p \right] |\omega_i| - k^p |\omega_{k+1}| \lesssim \sum_{i=1}^k i^{p-1} |\omega_i| \lesssim \sum_{i=1}^k i^{p-1-m} \lesssim 1 ,
\end{align*}
which shows that $f \in W^{1,p} (\omega, \R^n)$.

Thus, properties \emph{(\ref{tau f_k border})}, \emph{(\ref{tau f_k C alpha})}, \emph{(\ref{tau f_k dist kI})} and \eqref{tau size Om_k} imply properties \emph{\ref{tau border})}, \emph{\ref{tau C alpha})} and \emph{\ref{tau dist})}.
The equalities $f (\omega) = F \omega$ and $f (\overline{\omega}) = \overline{F \omega}$ are a consequence of \emph{\ref{tau border})}.

Finally, we estimate the integrability of $D f$. 
Given $t>0$, let $k_{1}=\max\{1,\floor{t-\mathcal{R}}\}$.
By \emph{(\ref{tau bound of Dfk})} and \emph{(\ref{omegak})}, we obtain that for all $k \geq k_1$,
\[
 |D f_k| \leq \tilde{k}_1 + 1 \quad \text{in } \omega \setminus \omega_{k_1} ,
\]
so
\[
 |D f| \leq \tilde{k}_1 + 1 \quad \text{in } \omega \setminus \omega_{k_1} .
\]
Therefore,
\[\{x\in \omega :| D f(x)|>t\}\subset\omega_{k_{1}},\]
hence, from (\ref{tau size Om_k}), we obtain
\[\frac{|\{x\in\omega:| D f(x)|>t\}|}{|\omega|}\lesssim \max\{1,t^{-m}\}. \]
Since $\dist(F,\bigcup_{a=0}^{n-m}E_{j,|F|}^{a})<r_{j,|F|}$ we have $|F|>\frac{1}{2}$; therefore \emph{\ref{tau regularity})} follows.
\end{proof}
Next, we construct a laminate that goes from $E_{j}$ to $E_{j+m}$. Again, its proof follows the construction of Section \ref{sect: sketch of the proof}.
\begin{lem}\label{laminate E_j to E_infty U E_j+1}
Let $j\in\N$ and $A\in E_j$.
Then there exist $N \in \N \cap [2, 2^n]$,  
\begin{equation}\label{eq:Pi}
 P_{1}\in E_{j+m} ;Ê\qquad P_{i}\in E_{j+m}\cup\bigcup_{a=0}^{n-m} E_{j+m}^{a} , \quad 2\leq i\leq N ; \qquad \lambda_i \in [0,1], \quad 1 \leq i \leq N
\end{equation}
such that $\nu_{A}:=\sum_{i=1}^N \lambda_{i} \delta_{P_{i}}$ belongs to $\mathcal{L}(\R^{n\times n})$, $\overline{\nu_{A}}=A$, $P_i \neq P_j$ for $i \neq j$,
\begin{equation}\label{eq:li}
 |A-P_{1}|<2^{-j}; \qquad|P_{i}|=|A| , \quad 1\leq i\leq N; \qquad 1-\lambda_{1}\lesssim\frac{2^{-j}}{|A|\max\{1,|A|^{m-1}\}}.
\end{equation}
\end{lem}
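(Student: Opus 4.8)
The plan is to reproduce, in an eigenbasis of $A$, the ``one entry at a time'' splitting of Section~\ref{sect: sketch of the proof}, using the single small target value $\rho:=\rho_{j+m,|A|}$. Write $\mathcal{R}:=|A|$ and $M:=\max\{1,\mathcal{R}^{m-1}\}$, so that $\rho=\tfrac34\,2^{-j-m}/M$, and diagonalise $A=Q\diag(\sigma_1,\dots,\sigma_n)Q^{T}$ with $Q\in SO(n)$ and $\sigma_1\le\cdots\le\sigma_n=\mathcal{R}$. From $A\in E_j$ one reads off $2^{-j-m}/M<\sigma_i<2^{-j}/M$ for $1\le i\le n-m+1$, $\sigma_i\in[\sigma_{n-m+1},\mathcal{R}]$ for the remaining indices, and $\mathcal{R}>\tfrac12+2^{-j}$. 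The only facts the construction uses are $\rho<\sigma_i\le\mathcal{R}$ for $i\le n-1$, $\mathcal{R}-\rho>\mathcal{R}/2$, and $\tfrac34\,2^{-j-m}\in(2^{-j-2m},2^{-j-m})$, all of which are immediate.

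\emph{First step: producing $P_1$.} Set $D_0:=A$ and, for $\ell=1,\dots,n-m+1$, split the $\ell$-th diagonal entry (in the $Q$-basis) of $D_{\ell-1}$ along the rank-one segment joining the matrix with that entry equal to $\rho$ and the one with it equal to $\mathcal{R}$, with the unique barycentre-preserving weights $p_\ell=\frac{\mathcal{R}-\sigma_\ell}{\mathcal{R}-\rho}\in(0,1)$ and $1-p_\ell$; call the ``$\rho$''-child $D_\ell$ and the ``$\mathcal{R}$''-child $C_\ell$. Telescoping these splits,
\[
A=\Big(\prod_{\ell=1}^{n-m+1}p_\ell\Big)D_{n-m+1}+\sum_{\ell=1}^{n-m+1}\Big(\prod_{k<\ell}p_k\Big)(1-p_\ell)\,C_\ell ,
\]
and I put $P_1:=D_{n-m+1}$, $\lambda_1:=\prod_{\ell=1}^{n-m+1}p_\ell$. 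Then $P_1$ has its $n-m+1$ smallest eigenvalues equal to $\rho$ and norm $\mathcal{R}$, so $\rho M=\tfrac34 2^{-j-m}\in(2^{-j-2m},2^{-j-m})$ gives $P_1\in E_{j+m}$; moreover $|A-P_1|=\max_{i\le n-m+1}|\sigma_i-\rho|<2^{-j}/M\le2^{-j}$, and $1-\lambda_1\le\sum_\ell(1-p_\ell)\le(n-m+1)\frac{\sigma_{n-m+1}}{\mathcal{R}-\rho}\lesssim\frac{2^{-j}}{\mathcal{R}M}$, which is the required estimate.

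\emph{Second step: dispersing the escaping mass.} Each $C_\ell$ is diagonal in the $Q$-basis, with entries $\rho$ in positions $1,\dots,\ell-1$, $\mathcal{R}$ in positions $\ell$ and $n$, and $\sigma_{\ell+1},\dots,\sigma_{n-1}\in(\rho,\mathcal{R}]$ in the remaining positions. I split, one at a time, the positions $\ell+1,\dots,n-1$ of $C_\ell$ along the rank-one segment between $\rho$ and $\mathcal{R}$ (again with the unique convexifying weights), obtaining a laminate $\mu_\ell$ with $\overline{\mu_\ell}=C_\ell$ all of whose atoms are diagonal in the $Q$-basis with every eigenvalue in $\{\rho,\mathcal{R}\}$ and at least two equal to $\mathcal{R}$ (positions $\ell$ and $n$). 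This is where the choice $\rho=\rho_{j+m,\mathcal{R}}$ is essential: an atom with $a$ eigenvalues equal to $\rho$ lies in $E^{a}_{j+m,\mathcal{R}}$ when $a\le n-m$, and when $a\ge n-m+1$ its $n-m+1$ smallest eigenvalues are all $\rho$ and its norm is $\mathcal{R}$, so it lies in $E_{j+m}$; in both cases its norm is $|A|$. I then set $\nu_A:=\lambda_1\delta_{P_1}+\sum_\ell\big(\prod_{k<\ell}p_k\big)(1-p_\ell)\,\mu_\ell$, which belongs to $\mathcal{L}(\R^{n\times n})$ since it is obtained from $\delta_A$ by the rank-one splits just described, and which satisfies $\overline{\nu_A}=A$.

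\emph{Third step: bookkeeping.} The atoms are pairwise distinct: the supports of different $\mu_\ell$ are disjoint because the $\ell$-th entry separates them, each $\mu_\ell$ produces distinct tuples by construction, and $P_1$ differs from every $\mu_\ell$-atom in positions $1,\dots,n-m+1$; after discarding any zero-weight atoms and relabelling the escaping ones as $P_2,\dots,P_N$ one gets $2\le N\le 1+\sum_{\ell=1}^{n-m+1}2^{n-1-\ell}\le 2^n$ (with $N\ge2$ because each $1-p_\ell>0$). The only genuinely delicate point is the second step: ensuring that the low-probability escaping part of the mass can be dispersed \emph{entirely} into $E_{j+m}\cup\bigcup_{a=0}^{n-m}E^{a}_{j+m}$ while keeping every leaf at norm exactly $|A|$. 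Everything else---validity of the rank-one splits, the membership $P_1\in E_{j+m}$, and the bound on $1-\lambda_1$---reduces to the three elementary inequalities recorded at the start.
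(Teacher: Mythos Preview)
Your argument is correct and follows essentially the same route as the paper: diagonalise $A$, and in that basis perform rank-one splits of each diagonal entry between the two target values $\rho_{j+m,|A|}$ and $|A|$, identifying $P_1$ with the branch that pushes the first $n-m+1$ entries down to $\rho$. The only difference is organisational. The paper runs a single binary-tree induction of depth $n$, halting a branch as soon as it enters $E_{j+m}$; you instead peel off the leftmost branch first (your ``first step'' producing $P_1$ and the siblings $C_\ell$) and then in the ``second step'' fully split every remaining entry of each $C_\ell$ without a stopping rule, so that all secondary atoms have \emph{every} eigenvalue in $\{\rho,\mathcal{R}\}$. This leads to a slightly different (but equally valid) laminate, and makes the verification that the atoms land in $E_{j+m}\cup\bigcup_a E^a_{j+m,\mathcal{R}}$ and the bound on $1-\lambda_1$ marginally more transparent.
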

\begin{proof}
Since $A\in E_{j}$, there exist $\sigma_{1}\leq \cdots\le\sigma_{n}$ and $Q\in SO(n)$ such that $2^{-j-m}<\sigma_{i} \max\lbrace 1,\sigma_{n}^{m-1}\rbrace<2^{-j}$  for $1\leq i\leq n-m+1$, and $A=Q \diag\left(\sigma_1,\ldots,\sigma_{n}\right)Q^T$.

Since $\rho_{j+m,\sigma_n}= \frac{3 \cdot 2^{-j-2-m}}{\max\lbrace 1,\sigma_{n}^{m-1}\rbrace}$, we have $\rho_{j+m,\sigma_n}<\sigma_{1}$.
As in Lemma \ref{lamlem}, we shall construct families \eqref{eq:Bl} by finite induction on $\ell$.
Let $B_{0,0}=A$, $\lambda_{0,0}=1$ and for $0\leq \ell\leq   n-1$,  $0\leq i\leq 2^{\ell}-1$, we assume $\{B_{\ell,i}\}_{i=0}^{2^{\ell}-1}$ and $\{\lambda_{\ell,i}\}_{i=0}^{2^\ell-1}$ have been defined, $Q^TB_{\ell,j}Q$ is diagonal, $\lambda_{\ell,i}\geq 0$,
equations \eqref{eq:inductionBl} and \eqref{lamlem autovalores de Bli > l} hold, $|B_{\ell,i}|=\sigma_n$, 
\[B_{\ell,0}=Q\diag \left(\underbrace{\rho_{j+m,\sigma_n},\ldots,\rho_{j+m,\sigma_n},}_{\min\{\ell,n-m+1\}} \sigma_{\min\{\ell,n-m+1\}+1},\ldots,\sigma_n\right)Q^{T},\]
and
\[\lambda_{\ell,0}=\prod_{k=1}^{\min\{\ell,n-m+1\}}\frac{\sigma_{n}-\sigma_{k}}{\sigma_{n}-\rho_{j+m,\sigma_n}}.\]
We also assume that if $B_{\ell,i}\notin E_{j+m}$ then
 \[\left(Q^{T}B_{\ell,i}Q\right)_{\alpha,\alpha}\in \{\rho_{j+m,\sigma_n},\sigma_n\}, \qquad \alpha=1,\ldots, \ell.\]
With the above induction hypotheses, we construct $\{B_{\ell+1,i}\}_{i=0}^{2^{\ell+1}-1}$ and $\{\lambda_{\ell+1,i}\}_{i=0}^{2^{\ell+1}-1}$ as follows. For any $0\leq i\leq 2^{\ell}-1$, let
\[B_{\ell+1,2i}= \begin{cases} B_{\ell,i}-Q\diag\left(\underbrace{0,\ldots,0,}_{\ell} \sigma_{\ell+1}-\rho_{j+m,\sigma_n},\underbrace{0,\ldots,0}_{n-\ell-1}\right)Q^{T} &\mbox{if } B_{\ell,i}\notin E_{j+m}, \\ 
B_{\ell,i} & \mbox{if } B_{\ell,i}\in E_{j+m}, \end{cases}\]
\[B_{\ell+1,2i+1}= \begin{cases} B_{\ell,i}-Q\diag\left(\underbrace{0,\ldots,0,}_{\ell} \sigma_{\ell+1}-\sigma_{n},\underbrace{0,\ldots,0}_{n-\ell-1}\right) Q^{T}&\mbox{if } B_{\ell,i}\notin E_{j+m}, \\ 
B_{\ell,i} & \mbox{if } B_{\ell,i}\in E_{j+m}, \end{cases}\]
\[\lambda_{\ell+1,2i}=\begin{cases} \frac{\sigma_{n}-\sigma_{\ell+1}}{\sigma_{n}-\rho_{j+m,\sigma_n}}\lambda_{\ell,i} &\mbox{if } B_{\ell,i}\notin E_{j+m}, \\ 
\lambda_{\ell,i} & \mbox{if } B_{\ell,i}\in E_{j+m}, \end{cases}\]
\[\lambda_{\ell+1,2i+1}=\begin{cases} \frac{\sigma_{\ell+1}-\rho_{j+m,\sigma_n}}{\sigma_{n}-\rho_{j+m,\sigma_n}}\lambda_{\ell,i} &\mbox{if } B_{\ell,i}\notin E_{j+m}, \\ 
0 & \mbox{if } B_{\ell,i}\in E_{j+m}. \end{cases}\]
Hence
\[
 B_{\ell,i} = \frac{\sigma_{n}-\sigma_{\ell+1}}{\sigma_{n}-\rho_{j+m,\sigma_n}} B_{\ell+1,2i} + \frac{\sigma_{\ell+1}-\rho_{j+m,\sigma_n}}{\sigma_{n}-\rho_{j+m,\sigma_n}} B_{\ell+1,2i+1}.
\]
Using that $2^{-j-2m}<\rho_{j+m,\sigma_n}\sigma_n<2^{-j-m}$ and the definition of $E_j$, we have that $B_{\ell,0}\in E_{j+m}$ if and only if $\ell\geq n-m+1$. Hence, if $B_{\ell,0}\in E_{j+m}$,
\[
 \lambda_{\ell+1,0}= \lambda_{\ell,0} = \prod_{k=1}^{n-m+1}\frac{\sigma_{n}-\sigma_{k}}{\sigma_{n}-\rho_{j+m,\sigma_n}} =
 \prod_{k=1}^{\min \{ \ell+1, n-m+1\}} \frac{\sigma_{n}-\sigma_{k}}{\sigma_{n}-\rho_{j+m,\sigma_n}},
\]
whereas if $B_{\ell,0}\notin E_{j+m}$,
\[
 \lambda_{\ell+1,0}= \frac{\sigma_{n}-\sigma_{\ell+1}}{\sigma_{n}-\rho_{j+m,\sigma_n}} \lambda_{\ell,i} = \prod_{k=1}^{\ell+1} \frac{\sigma_{n}-\sigma_{k}}{\sigma_{n}-\rho_{j+m,\sigma_n}} = \prod_{k=1}^{\min \{ \ell+1, n-m+1\}} \frac{\sigma_{n}-\sigma_{k}}{\sigma_{n}-\rho_{j+m,\sigma_n}} .
\]
With this, it is clear that $\{B_{\ell+1,i}\}_{i=0}^{2^{\ell+1}-1}$ and $\{\lambda_{\ell+1,i}\}_{i=0}^{2^{\ell+1}-1}$ satisfy the induction hypotheses.

For $i=1,\ldots, 2^{n}$ define $\hat{\lambda}_{i}=\lambda_{n,i-1}$ and $\hat{P}_{i}=B_{n,i-1}$.
In order to make the $\hat{P}_{i}$ distinct, we let $N \in \N$ and $P_1, \ldots, P_N$ be such that
\[ 
 \{ P_1, \ldots, P_N \} = \{ \hat{P}_1, \ldots, \hat{P}_{2^n} \} , \qquad P_i \neq P_j \ \text{ if } \ i \neq j , \qquad P_1 = \hat{P}_1
\]
and define
\[
 \lambda_i = \sum_{j: \hat{P}_j = P_i} \hat{\lambda}_j , \qquad i = 1, \ldots , N .
\]
We have shown that $\nu_A\in \mathcal{L}(\R^{n\times n})$ and $\overline{\nu_{A}}=B_{0,0} = A$. Now we estimate the distance between $A$ and $P_{1}$:
\begin{align*}
|A-P_1|&=|Q \diag\left(\underbrace{\sigma_1-\rho_{j+m,\sigma_n},\ldots,\sigma_{n-m+1}-\rho_{j+m,\sigma_n}}_{n-m+1},\underbrace{0,\ldots,0}_{m-1}\right) Q^T|\\
&=\sigma_{n-m+1}-\rho_{j+m,\sigma_n}<\sigma_{n-m+1}<2^{-j} .
\end{align*}

To finish the proof it only remains to check the last estimate of \eqref{eq:li}.
Notice that
\[
 \lambda_{1} \geq \hat{\lambda}_1= \prod_{k=1}^{n-m+1}\frac{\sigma_{n}-\sigma_{k}}{\sigma_{n}-\rho_{j+m,\sigma_n}} = \prod_{k=1}^{n-m+1}\left( 1-\frac{\sigma_{k}-\rho_{j+m,\sigma_n}}{\sigma_{n}-\rho_{j+m,\sigma_n}}\right) \geq \prod_{k=1}^{n-m+1}\left( 1-\frac{\sigma_{k}}{\sigma_{n}}\right) .
\]
If $|A|\geq 1$, for $1\leq k\leq n-m+1$ we have $\sigma_{k}\sigma_{n}^{m-1}<2^{-j}$, so
\[
1-\lambda_{1} \leq 1-\prod_{k=1}^{n-m+1}\left( 1-\frac{2^{-j}}{\sigma_{n}^{m}}\right)= 1-\left( 1-\frac{2^{-j}}{\sigma_{n}^{m}}\right)^{n-m+1}\lesssim\frac{2^{-j}}{\sigma_{n}^{m}}=\frac{2^{-j}}{|A|^{m}}.
\]
If, on the other hand, $|A|< 1$, then $\sigma_{k}<2^{-j}$ for $1\leq k\leq n-m+1$, and since $A\in E_j$, we know that $|A|>\frac{1}{2}$, so
\begin{equation*}
 1-\lambda_{1} \leq 1-\prod_{k=1}^{n-m+1}\left( 1-\frac{2^{-j}}{\sigma_{n}}\right)=1-\left( 1-\frac{2^{-j}}{\sigma_{n}}\right)^{n-m+1}\lesssim\frac{2^{-j}}{\sigma_{n}}=\frac{2^{-j}}{|A|}
\end{equation*}
and the proof is finished.
\end{proof}

A variant of Lemma \ref{laminate E_j to E_infty U E_j+1} will be needed.
If, instead of starting from an $A\in E_j$, we begin with the identity matrix, the same proof of Lemma \ref{laminate E_j to E_infty U E_j+1} yields the following result, which will be used in the first step of the construction of the sequence approximating the final homeomorphism of Theorem \ref{th1}.

\begin{lem}\label{laminate I to E_infty U E_j_1}
Given $\alpha\in(0,1)$ and $\delta>0$, let $j_1\in\N$ be as in Lemma \ref{tau lemma}.
Then there exist $N \in \N \cap [2, 2^n]$,
\begin{equation*}
P_{1}\in E_{j_1} ;\qquad P_{i}\in E_{j_1}\cup\bigcup_{a=0}^{n-m} E_{j_1}^{a} , \quad 2\leq i\leq N ; \qquad \lambda_i \in [0,1], \quad 1 \leq i \leq N
\end{equation*}
such that $\nu_{I}:=\sum_{i=1}^{N} \lambda_{i} \delta_{P_{i}}$ belongs to $\mathcal{L}(\R^{n\times n})$, $\overline{\nu_{I}}=I$, $P_i \neq P_j$ for $i \neq j$ and
\begin{equation*}
 |I| \lesssim |P_{i}| \lesssim |I| , \qquad 1\leq i\leq N.
\end{equation*}
\end{lem}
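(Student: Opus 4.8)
The plan is to copy the proof of Lemma~\ref{laminate E_j to E_infty U E_j+1} almost word for word, taking $A=I$ (so that the diagonalizing rotation is $Q=I$ and $\sigma_1=\cdots=\sigma_n=1$) and replacing $E_{j+m}$ by $E_{j_1}$ throughout. The one genuine difference is that, since all singular values of $I$ coincide, $\sigma_n$ can no longer serve as the upper endpoint of the rank-one splittings: the weight $\frac{\sigma_n-\sigma_{\ell+1}}{\sigma_n-\rho}$ appearing there would vanish. To repair this I would fix once and for all an auxiliary constant $\mathcal{R}:=2>1$, set $\rho:=\rho_{j_1,\mathcal{R}}=3\cdot 2^{-j_1-m-1}<1$, and split each diagonal entry of the current matrix --- which always equals $1$ just before it is split --- between $\rho$ and $\mathcal{R}$. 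Since the conclusion of Lemma~\ref{tau lemma} holds for every $j\ge j_1$, I may replace $j_1$ by $\max\{j_1,2\}$ and assume $2^{-j_1}<\frac12$; note also that, with $\mathcal{R}=2$ and $m\ge 2$, the set $E_{j_1,\mathcal{R}}^a$ is well defined.

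First I would run the finite induction of Lemma~\ref{laminate E_j to E_infty U E_j+1}: set $B_{0,0}=I$, $\lambda_{0,0}=1$, and for $0\le\ell\le n-1$, $0\le i\le 2^\ell-1$, split $B_{\ell,i}$ along $e_{\ell+1}\otimes e_{\ell+1}$ when $B_{\ell,i}\notin E_{j_1}$, sending the $(\ell+1)$-th diagonal entry to $\rho$ (new matrix $B_{\ell+1,2i}$, weight factor $\frac{\mathcal{R}-1}{\mathcal{R}-\rho}$) or to $\mathcal{R}$ (new matrix $B_{\ell+1,2i+1}$, weight factor $\frac{1-\rho}{\mathcal{R}-\rho}$), and leaving $B_{\ell,i}$ untouched when $B_{\ell,i}\in E_{j_1}$. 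These are rank-one splittings with non-negative weights summing to $\lambda_{\ell,i}$, and the identity $\frac{\mathcal{R}-1}{\mathcal{R}-\rho}\,\rho+\frac{1-\rho}{\mathcal{R}-\rho}\,\mathcal{R}=1$ shows that the barycenter is preserved; hence, as in \eqref{eq:inductionBl}, $\sum_i\lambda_{n,i}\delta_{B_{n,i}}$ is a laminate with barycenter $I$.

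The heart of the verification is the description of the final matrices. Each $B_{n,i}$ is diagonal with entries in $\{\rho,1,\mathcal{R}\}$; let $q$ be the number of entries equal to $\rho$. Because $q$ starts at $0$ and grows by at most $1$ at each split, the splitting of a given descendant stops precisely when $q$ first reaches $n-m+1$, and I would check that this is exactly the moment the matrix enters $E_{j_1}$: its norm is then $1$ or $\mathcal{R}$, both exceeding $\frac12+2^{-j_1}$, its $n-m+1$ smallest singular values all equal $\rho$, and $\rho\max\{1,|B_{n,i}|^{m-1}\}$ equals $3\cdot 2^{-j_1-2}$ (if the norm is $\mathcal{R}$) or $3\cdot 2^{-j_1-m-1}$ (if the norm is $1$), both lying strictly between $2^{-j_1-m}$ and $2^{-j_1}$ since $m\ge 2$. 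Thus $B_{n,i}\in E_{j_1}$ whenever $q\ge n-m+1$; and when $q\le n-m$ all $n$ entries have been split, so all of them lie in $\{\rho,\mathcal{R}\}$ with $\rho=\rho_{j_1,\mathcal{R}}$, giving $B_{n,i}\in E_{j_1,\mathcal{R}}^{q}\subset E_{j_1}^{q}$. In particular $B_{n,0}=\diag(\rho,\dots,\rho,1,\dots,1)$ (with $n-m+1$ copies of $\rho$) lies in $E_{j_1}$ and has norm $1=|I|$, while the opposite extreme $B_{n,2^n-1}=\mathcal{R}I$ lies in $E_{j_1,\mathcal{R}}^{0}$ and has norm $\mathcal{R}$.

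Finally I would merge the repeated matrices among $\{B_{n,i}\}$, exactly as at the end of the proof of Lemma~\ref{laminate E_j to E_infty U E_j+1}, to obtain distinct $P_1=B_{n,0},P_2,\dots,P_N$ with $N\le 2^n$ and weights $\lambda_i$ the corresponding partial sums, and set $\nu_I:=\sum_{i=1}^N\lambda_i\delta_{P_i}$. Then $\nu_I\in\mathcal{L}(\R^{n\times n})$, $\overline{\nu_I}=I$, the $P_i$ are distinct, $\lambda_i\in[0,1]$, $P_1\in E_{j_1}$, $P_i\in E_{j_1}\cup\bigcup_{a=0}^{n-m}E_{j_1}^{a}$ for all $i$, and $N\ge 2$ because $\mathcal{R}I$ also belongs to the support and differs from $P_1$; moreover $|P_i|\in\{1,\mathcal{R}\}$, so $|I|\le|P_i|\le\mathcal{R}=2|I|$ and hence $|I|\lesssim|P_i|\lesssim|I|$. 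The main obstacle is precisely spotting that the construction of Lemma~\ref{laminate E_j to E_infty U E_j+1} degenerates at $A=I$ owing to the coincidence of the singular values, and choosing the auxiliary constant $\mathcal{R}$ so as to cure this while keeping every $P_i$ of norm comparable to $|I|$; once that is in place, the membership checks above are the only real computation, and all the remaining bookkeeping (laminate property, barycenter, distinctness) is a mechanical transcription of that lemma's proof.
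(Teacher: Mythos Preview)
Your proposal is correct and is essentially what the paper intends: the paper gives no separate proof for this lemma, stating only that ``the same proof of Lemma~\ref{laminate E_j to E_infty U E_j+1} yields the following result.'' You rightly observe that the literal construction of that lemma degenerates at $A=I$ because all singular values coincide (the weight $(\sigma_n-\sigma_{\ell+1})/(\sigma_n-\rho)$ vanishes), and your fix---replacing the upper splitting endpoint $\sigma_n$ by an auxiliary $\mathcal{R}=2$---is the natural modification, consistent with the weakened conclusion $|I|\lesssim|P_i|\lesssim|I|$ (in place of $|P_i|=|A|$) that the paper states; your membership checks for $E_{j_1}$ and $E_{j_1,\mathcal{R}}^a$ are accurate.
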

Next, we approximate the laminate of Lemma \ref{laminate E_j to E_infty U E_j+1} by a function.
\begin{lem}\label{h lemma}
Let $A\in E_j$. For any bounded open $\omega\subset\R^n$, $\alpha\in (0,1)$ and $\eta>0$ there exists a piecewise affine homeomorphism $h\in W^{1,1}(\omega, \R^n)\cap C^{\alpha}(\overline{\omega})$ satisfying
\begin{enumerate}[(a)]
\item\label{h lemma border} $h(x)=Ax$ on $\partial \omega$.
\item\label{h lemma C alpha} $\|h-A\|_{C^{\alpha}(\overline{\omega})}<\eta$ and 
$\|h^{-1}-A^{-1}\|_{C^{\alpha}(\overline{A\omega})}<\eta$.
\item\label{h lemma E_j+1} $ D h(x)\in E_{j+m}$ a.e.\ $x\in\omega$.
\item\label{h lemma integral} $\int_{\omega}| D h(x)-A|dx\lesssim 2^{-j}|\omega|$.
\item\label{h omega tilde} There exists an open set $\tilde{\omega}\subset\omega$ such that
\begin{enumerate}[(\ref{h omega tilde}1)]
\item\label{h omega tilde puntual} $| D h(x)-A|\lesssim 2^{-j}$ a.e.\ $x\in \omega\setminus\tilde{\omega}$.
\item\label{h omega tilde medida} $|\{x\in\tilde{\omega}:| D h(x)|>t\}|\lesssim 2^{-j}|\omega|t^{-m}$.
\end{enumerate}
\end{enumerate}
\end{lem}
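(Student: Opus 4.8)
The plan is to repeat, at the scale $j+m$, the strategy behind Lemma~\ref{tau lemma}: split $A$ by a laminate reaching $E_{j+m}\cup\bigcup_{a}E_{j+m}^{a}$, realize that laminate by a piecewise affine bi-Lipschitz gradient map through Proposition~\ref{laminate-homeo}, then use Lemma~\ref{tau lemma} (with integer parameter $j+m$) to correct the comparatively small part of $\omega$ on which the gradient has landed near one of the \emph{closed} sets $E_{j+m}^{a}$ rather than inside the \emph{open} set $E_{j+m}$, and finally paste everything together with Lemma~\ref{glue homeomorphisms}.

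Concretely, I would first apply Lemma~\ref{laminate E_j to E_infty U E_j+1} to $A\in E_j$, obtaining $\nu_A=\sum_{i=1}^{N}\lambda_i\delta_{P_i}\in\mathcal{L}(\R^{n\times n})$ with $\overline{\nu_A}=A$, the $P_i$ pairwise distinct, $P_1\in E_{j+m}$, $P_i\in E_{j+m}\cup\bigcup_{a=0}^{n-m}E_{j+m}^{a}$ for $i\ge2$, $|A-P_1|<2^{-j}$, $|P_i|=|A|$ for all $i$, and $1-\lambda_1\lesssim 2^{-j}\big(|A|\max\{1,|A|^{m-1}\}\big)^{-1}$. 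Every $P_i$ is positive definite with norm $|A|$ and smallest singular value bounded below by a positive constant depending only on $j$ and $|A|$, so there is $L\ge1$ (depending only on $j,|A|$) with $L^{-1}I\le P_i$ and $|P_i|\le L$. Choose $\delta_0\in(0,2^{-j}]$ with $\delta_0<\frac12\min\{L^{-1},\min_{i\neq i'}|P_i-P_{i'}|\}$ and small enough that the $\delta_0$-ball around $P_1$, and around any $P_i$ lying in $E_{j+m}$, is contained in the open set $E_{j+m}$, and that for every $P_i\in E_{j+m}^{a}$ any $M$ with $|M-P_i|<\delta_0$ satisfies $\dist\big(M,\bigcup_{b=0}^{n-m}E_{j+m,|M|}^{b}\big)<r_{j+m,|M|}$ (possible because $|P_i|=|A|$ confines $|M|$ to a fixed compact interval on which $\mathcal{R}\mapsto r_{j+m,\mathcal{R}}$ is bounded below). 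Proposition~\ref{laminate-homeo} applied to $\nu_A$ with this $\delta_0$ produces a piecewise affine bi-Lipschitz homeomorphism $g\colon\omega\to A\omega$ with $g=A$ on $\partial\omega$, $\|g-A\|_{C^{\alpha}(\overline\omega)}<\delta_0$, $\|g^{-1}-A^{-1}\|_{C^{\alpha}(A\overline\omega)}<\delta_0$, and $|\{x\in\omega:|Dg(x)-P_i|<\delta_0\}|=\lambda_i|\omega|$.

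Put $\tilde\omega:=\bigcup_{i\ge2}\{x\in\omega:|Dg(x)-P_i|<\delta_0\}$, a union of affine pieces of $g$ (hence open) of measure $(1-\lambda_1)|\omega|\lesssim 2^{-j}\big(|A|\max\{1,|A|^{m-1}\}\big)^{-1}|\omega|\le 2^{-j}|\omega|$, using $|A|>\frac12$ (valid since $A\in E_j$). Set $h:=g$ on $\omega\setminus\tilde\omega$; there $Dg$ lies within $\delta_0$ of $P_1$, so $Dg\in E_{j+m}$ and $|Dg-A|\le\delta_0+|P_1-A|\lesssim 2^{-j}$. On each affine piece $\omega_s\subset\tilde\omega$ of $g$ on which $Dg\equiv F_s$ with $F_s$ within $\delta_0$ of some $P_i\in E_{j+m}^{a}$, apply Lemma~\ref{tau lemma} to $F=F_s$ (admissible by the choice of $\delta_0$), with exponent $\alpha$, threshold $\delta=\eta/4$ and integer $j+m$; translating the resulting map by the affine constant of $g$ on $\omega_s$ gives a piecewise affine homeomorphism $g_s\in W^{1,1}(\omega_s)\cap C^{\alpha}(\overline{\omega_s})$ with $g_s=g$ on $\partial\omega_s$, $\max\{\|g_s-g\|_{C^{\alpha}(\overline{\omega_s})},\|g_s^{-1}-g^{-1}\|_{C^{\alpha}}\}<\eta/4$, $Dg_s\in E_{j+m}$ a.e., and $|\{x\in\omega_s:|Dg_s(x)|>t\}|\lesssim|F_s|^{m}t^{-m}|\omega_s|\lesssim|A|^{m}t^{-m}|\omega_s|$; set $h:=g_s$ on $\omega_s$. (On the pieces of $\tilde\omega$ where $Dg$ is near some $P_i\in E_{j+m}$ no change is needed: $h=g$ there, $Dg\in E_{j+m}$, $|Dg|\le|A|+\delta_0$.) Gluing $g$ with all the $g_s$ through Lemma~\ref{glue homeomorphisms} yields the piecewise affine homeomorphism $h$ with $h=A$ on $\partial\omega$, $Dh\in E_{j+m}$ a.e., $h\in W^{1,1}(\omega)$ (since $g$ is Lipschitz and $\sum_s\|Dg_s\|_{L^{1}(\omega_s)}\lesssim\sum_s|\omega_s|\le|\omega|$ by the $W^{1,p}$, $p\in(1,m)$, estimate of Lemma~\ref{tau lemma}), and $\|h-A\|_{C^{\alpha}(\overline\omega)}\le 2\sup_s\|g-g_s\|_{C^{\alpha}}+\|g-A\|_{C^{\alpha}}<\frac{\eta}{2}+\delta_0<\eta$, and likewise $\|h^{-1}-A^{-1}\|_{C^{\alpha}(A\overline\omega)}<\eta$. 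This gives (a), (b), (c), the set $\tilde\omega$ of (\ref{h omega tilde}), and (\ref{h omega tilde puntual}).

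What is left --- and where essentially all the work lies --- is to check that the $|A|$-dependent constants in (d) and (\ref{h omega tilde medida}) collapse. Summing the piecewise bounds above over the pieces of $\tilde\omega$ (trivially on the $E_{j+m}$-pieces, via Lemma~\ref{tau lemma} on the corrected ones) gives, for all $t>0$,
\[
 |\{x\in\tilde\omega:|Dh(x)|>t\}|\lesssim|A|^{m}t^{-m}(1-\lambda_1)|\omega|\lesssim\frac{|A|^{m-1}}{\max\{1,|A|^{m-1}\}}\,2^{-j}|\omega|\,t^{-m}\le 2^{-j}|\omega|\,t^{-m},
\]
because $|A|^{m-1}/\max\{1,|A|^{m-1}\}=\min\{|A|^{m-1},1\}\le1$; this is (\ref{h omega tilde medida}). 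For (d), split $\int_\omega|Dh-A|=\int_{\omega\setminus\tilde\omega}|Dh-A|+\int_{\tilde\omega}|Dh-A|$: the first integral is $\lesssim 2^{-j}|\omega|$ by (\ref{h omega tilde puntual}), and $\int_{\tilde\omega}|Dh-A|\le\int_{\tilde\omega}|Dh|+|A|\,|\tilde\omega|$ with $|A|\,|\tilde\omega|\lesssim 2^{-j}|\omega|/\max\{1,|A|^{m-1}\}\le 2^{-j}|\omega|$, while integrating (\ref{h omega tilde medida}) in $t$ and using $m\ge2$ together with $|\tilde\omega|\lesssim 2^{-j}|\omega|$ gives
\[
 \int_{\tilde\omega}|Dh|\le\int_0^{\infty}\min\big\{|\tilde\omega|,\,C\,2^{-j}|\omega|\,t^{-m}\big\}\,dt\lesssim|\tilde\omega|^{1-\frac1m}\big(2^{-j}|\omega|\big)^{\frac1m}\lesssim 2^{-j}|\omega|.
\]
Hence (d) holds and the proof is complete. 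The only delicate point besides this elementary arithmetic is ensuring that $\delta_0$ can be fixed once and for all so that every corrected piece really meets the hypothesis $\dist(F_s,\bigcup_a E_{j+m,|F_s|}^{a})<r_{j+m,|F_s|}$ of Lemma~\ref{tau lemma}; this is exactly where one uses that all the $P_i$ share the norm $|A|$.
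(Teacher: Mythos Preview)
Your proof is correct and follows essentially the same route as the paper: build the laminate $\nu_A$ of Lemma~\ref{laminate E_j to E_infty U E_j+1}, realize it with Proposition~\ref{laminate-homeo} to get $g$, correct the pieces landing near $\bigcup_a E_{j+m}^{a}$ via Lemma~\ref{tau lemma}, and glue with Lemma~\ref{glue homeomorphisms}; the paper's $\tilde\omega$ is defined as the complement of the $P_1$-region rather than as $\bigcup_{i\ge2}\{|Dg-P_i|<\delta_0\}$, but these agree a.e., and the paper estimates $\int_{\tilde\omega}|Dh|$ by splitting the layer-cake at $2|A|$ and using $|A|^m t^{-m}|\tilde\omega|$ rather than your $\min\{|\tilde\omega|,C2^{-j}|\omega|t^{-m}\}$ interpolation, which is an equivalent calculation. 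The only omission is that your list of smallness requirements on $\delta_0$ should also include $\delta_0<\eta/2$, since you use this in the final line $\frac{\eta}{2}+\delta_0<\eta$ for both $C^{\alpha}$ bounds.
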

\begin{proof}
First we build the laminate of Lemma \ref{laminate E_j to E_infty U E_j+1}: 
\[\nu_{A}=\sum_{i=1}^N \lambda_{i} \delta_{P_{i}} \in \mathcal{L}(\R^{n\times n})\]
satisfying $\overline{\nu_{A}}=A$, \eqref{eq:Pi} and \eqref{eq:li}.
Let $\ve>0$ be such that
\[
 \ve< \min \left\{ \frac{1}{2}r_{j,|A|} , \ 2^{-j} - |A - P_1|, \ \frac{1}{2} \min_{2 \leq i \leq N} |P_1 - P_i| \right\}
\]
and
\[r_{j,|A|}<2 r_{j,\mathcal{R}}\qquad \text{for }\mathcal{R}\in (|A|-\ve,|A|+\ve).\]
 Then, Proposition \ref{laminate-homeo} gives a piecewise affine homeomorphism $g:\omega\to A\omega$ satisfying
\begin{enumerate}[1)]
\item $g(x)=Ax$ on $\partial\omega$,
\item $\|g-A\|_{C^{\alpha}(\overline{\omega})}<\frac{\eta}{2}$ and 
$\|g^{-1}-A^{-1}\|_{C^{\alpha}(A\overline{\omega})}<\frac{\eta}{2}$,
\item\label{item:DgPi} $|\{x\in\omega: | D g(x)-P_{i}|<\ve \}|=\lambda_{i}|\omega|$ for $i=1,\ldots, N$.
\end{enumerate}
Let $\tilde{\omega}=\{x\in\omega:  g \text{ is affine in a neighbourhood of } x \text{ and } | D g(x)-P_1|> \ve\}$ and 
\[\hat{\omega}=\left\{x\in\omega: 
  \begin{array}{l}
  g \text{ is affine in a neighbourhood of } x \text{ and} \\
  | D g(x)-P_{i}|< \ve \text{ for some } i\in\{2, \ldots, N\} \text{ and } P_{i}\in\bigcup_{a=0}^{n-m} E_{j+m}^{a}
  \end{array} \right\}.\]
As $\dist ( E_{j+m}, \bigcup_{a=0}^{n-m} E_{j+m}^{a}) > \frac{1}{2}$ and $\ve < \frac{1}{4}$, we have that $\hat{\omega} \subset \tilde{\omega}$.
Note also that $\hat{\omega}$ and $\tilde{\omega}$ are open.
Finally, the choice of $\ve$ was done so that, thanks to \ref{item:DgPi}) the set of $x \in \Om$ such that $| D g(x)-P_1|= \ve$ has measure zero.

Since $g$ is piecewise affine, there exist a family $\{\hat{\omega}_{k}\}_{k\in\N}$ of open sets such that $\hat{\omega} =\bigcup_{k=1}^{\infty}\hat{\omega}_{k}$, and $\hat{P}_k\in\R^{n\times n}$, $b_k\in\R^n$ with $g(x)=\hat{P}_{k}x+b_{k}$ in $\hat{\omega}_{k}$.
Recalling that $|P_i|=|A|$ for $i=1,\ldots, N$, and $||A|-|\hat{P}_{k}||<\ve$, we have
\[
 \dist\left(\hat{P}_{k}, \bigcup_{a=0}^{n-m}E_{j+m}^{a}\right)<\ve<\frac{1}{2}r_{j,|A|}<r_{j,|\hat{P}_{k}|} , \qquad k \in \N .
\]
We define $h$ as the piecewise affine homeomorphism given by Lemma \ref{tau lemma} in each $\hat{\omega}_{k}$ and as $g$ in $\overline{\omega} \setminus\bigcup_{k=1}^{\infty}\hat{\omega}_{k}$. By Lemma \ref{glue homeomorphisms}, $h$ is a homeomorphism, and satisfies \emph{(\ref{h lemma border}), (\ref{h lemma C alpha})} and \emph{(\ref{h omega tilde puntual})}. Property \emph{(\ref{h lemma E_j+1})} comes from \emph{(\ref{tau dist})} in Lemma \ref{tau lemma}.
By \emph{(\ref{tau regularity})} of the same lemma we have
\[\frac{|\{x\in\hat{\omega}_{k}:| D h(x)|>t\}|}{|\hat{\omega}_{k}|}\lesssim |\hat{P}_{k}|^m t^{-m},\qquad t>0.\]
Therefore, using \eqref{eq:li} and that for all $k$ there exists $i$ such that $|\hat{P}_{k}-P_{i}|< \ve$, we have
\[\frac{|\{x\in\hat{\omega}:| D h(x)|>t\}|}{|\hat{\omega}|}\lesssim |A|^m t^{-m}.\]
Now, by \eqref{eq:li} and \ref{item:DgPi}), we have
\[
 \left| \left| Dh \right| - \left| A \right| \right| \leq \left| Dh - P_i \right| = \left| Dg - P_i \right| < \ve \qquad \text{a.e. in } \omega \setminus \hat{\omega}, 
\]
for some $i \in \{ 1, \ldots, N \}$ depending on the point.
Hence $| D h|<|A|+\ve$ a.e.\ in $\tilde{\omega}\setminus\hat{\omega}$. 
Thus, if $t > |A|+\ve$ we have
\[|\{x\in\tilde{\omega}: | D h(x)|>t\}|=|\{x\in\hat{\omega}: | D h(x)|>t\}|\lesssim |A|^m t^{-m}|\hat{\omega}|\leq |A|^m t^{-m}|\tilde{\omega}|,\]
whereas, if $0 < t \leq |A|+\ve$ we get $1\lesssim |A|^{m} t^{-m}$ and
\begin{align*}
|\{x\in\tilde{\omega}: | D h(x)|>t\}|&=|\{x\in\tilde{\omega}\setminus\hat{\omega}: | D h(x)|>t\}|+|\{x\in\hat{\omega}: | D h(x)|>t\}|\\
&\lesssim|\tilde{\omega}\setminus\hat{\omega}|+ |A|^{m} t^{-m}|\hat{\omega}|\lesssim |A|^{m} t^{-m}|\tilde{\omega}|.
\end{align*}
Hence, for all $t>0$,
\begin{equation}\label{h lemma integrability in tilde{omega}}
|\{x\in\tilde{\omega}: | D h(x)|>t\}|\lesssim |A|^{m} t^{-m}|\tilde{\omega}|.
\end{equation}
So using (\ref{h lemma integrability in tilde{omega}}) and 
\begin{equation}\label{h lemma measure of tilde{omega}}
|\tilde{\omega}|=(1-\lambda_1)|\omega|\lesssim \frac{2^{-j}}{|A|\max\{1,|A|^{m-1}\}}|\omega|,
\end{equation}
we have $|\{x\in\tilde{\omega}: | D h(x)|>t\}|\lesssim 2^{-j} t^{-m}|\omega|$, that is, \emph{(\ref{h omega tilde medida})}.

Finally,
\begin{equation}\label{eq:intDh-A}
 \int_{\omega} \left| D h(x)-A \right| dx \leq \int_{\tilde{\omega}} \left| D h(x) \right| dx + \left| A \right| \left|\tilde{\omega}\right| + \int_{\omega\setminus\tilde{\omega}} \left| D h(x)-A \right|dx .
\end{equation}
Using now the common formula for calculating the $L^1$ norm of a function in terms of its distribution function, as well as \eqref{h lemma integrability in tilde{omega}}, we obtain
\begin{align*}
 \int_{\tilde{\omega}} \left| D h(x) \right| dx & = \left[ \int_{0}^{2|A|} + \int_{2|A|}^{\infty} \right] \left| \{x\in\tilde{\omega}: \left| D h(x) \right|>t\} \right| dt \\
 & \lesssim \left| A \right| \left|\tilde{\omega}\right| + \left| A \right|^{m} \left| \tilde{\omega} \right|\int_{2|A|}^{\infty} t^{-m} dt \lesssim \left| A \right| \left|\tilde{\omega}\right| ,
\end{align*}
hence, thanks to \eqref{h lemma measure of tilde{omega}},
\begin{equation}\label{eq:intDh}
 \int_{\tilde{\omega}} \left| D h(x) \right| dx + \left| A \right| \left|\tilde{\omega}\right| \lesssim \left| A \right| \left|\tilde{\omega}\right| \leq 2^{-j} \left| \omega \right| ,
\end{equation}
while \emph{(\ref{h omega tilde puntual})} yields
\begin{equation}\label{eq:intDh-Atilde}
 \int_{\omega\setminus\tilde{\omega}} \left| D h(x)-A \right|dx \lesssim 2^{-j} \left| \omega\setminus\tilde{\omega} \right| \leq 2^{-j} \left| \omega \right| .
\end{equation}
Inequalities \eqref{eq:intDh-A}, \eqref{eq:intDh} and \eqref{eq:intDh-Atilde} show \emph{(\ref{h lemma integral})} and finish the proof.
\end{proof}
The next lemma is the analogous of the previous one when one starts with the identity matrix.
Its proof is similar to that of Lemma \ref{h lemma}, but using Lemma \ref{laminate I to E_infty U E_j_1} instead of Lemma \ref{laminate E_j to E_infty U E_j+1}.

\begin{lem}\label{h lemma prime}
For any $\alpha\in (0,1)$ and $\eta>0$ there exist $j_1 \in \N$ and a piecewise affine homeomorphism $h\in W^{1,1}(\Omega, \R^n)\cap C^{\alpha}(\overline{\Omega},\R^{n})$ satisfying
\begin{enumerate}[(a)]
\item $h(x)=x$ on $\partial \Omega$.
\item $\|h-I\|_{C^{\alpha}(\overline{\Omega})}<\eta$ and 
$\|h^{-1}-I\|_{C^{\alpha}(\overline{\Omega})}<\eta.$
\item $ D h(x)\in E_{j_1}$ a.e.\ $x\in\Omega$.
\item $\int_{\Omega} \left| D h(x)-I \right| dx\lesssim \frac{1}{2}$.
\item $\left| \{x\in\Omega:| D h(x)|>t\} \right| \lesssim \left| \Omega \right| t^{-m}$ for all $t>0$.
\end{enumerate}
\end{lem}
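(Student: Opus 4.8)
\emph{Proof plan.} The strategy is to reproduce the proof of Lemma~\ref{h lemma} almost verbatim, replacing the reference matrix $A\in E_j$ there by the identity $I$ and Lemma~\ref{laminate E_j to E_infty U E_j+1} by Lemma~\ref{laminate I to E_infty U E_j_1}. The one structural difference is that Lemma~\ref{laminate I to E_infty U E_j_1} carries no gain factor $2^{-j}$, so every bound will come out proportional to $|\Omega|$ rather than to $2^{-j}|\Omega|$ — which is exactly what (d) and (e) ask for. Concretely, I would first fix $\delta_0:=\tfrac{\eta}{4}$, apply Lemma~\ref{tau lemma} with the given $\alpha$ and with $\delta=\delta_0$ to obtain the threshold $j_1\in\N$, and then invoke Lemma~\ref{laminate I to E_infty U E_j_1} (same $\alpha$, $\delta$) to get $N\le 2^n$, matrices $P_1\in E_{j_1}$ and $P_i\in E_{j_1}\cup\bigcup_{a=0}^{n-m}E_{j_1}^a$ for $2\le i\le N$, all distinct, and weights $\lambda_i$ with $\nu_I:=\sum_{i=1}^N\lambda_i\delta_{P_i}\in\mathcal L(\R^{n\times n})$, $\overline{\nu_I}=I$, and $|I|\lesssim|P_i|\lesssim|I|$.

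Next I would choose $\varepsilon>0$ so small that $\varepsilon<\tfrac12 r_{j_1,\mathcal R}$ for every $\mathcal R$ in a fixed neighbourhood of $1$, that $2\varepsilon<\min_{i\ne i'}|P_i-P_{i'}|$, that the $\varepsilon$-ball about each $P_i\in E_{j_1}$ stays inside the open set $E_{j_1}$, and that $\varepsilon<\tfrac{\eta}{2}$; then $\varepsilon$ is also admissible as the $\delta$ of Proposition~\ref{laminate-homeo}. Since $\overline{\nu_I}=I$, that proposition applied on $\Omega$ yields a piecewise affine bi-Lipschitz homeomorphism $g:\Omega\to I\Omega=\Omega$ with $g=\id$ on $\partial\Omega$, $\|g-I\|_{C^\alpha(\overline\Omega)}<\tfrac{\eta}{2}$, $\|g^{-1}-I\|_{C^\alpha(\overline\Omega)}<\tfrac{\eta}{2}$, and $|\{x:|Dg(x)-P_i|<\varepsilon\}|=\lambda_i|\Omega|$. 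I then let $\hat\omega$ be the open set of $x$ at which $g$ is affine with $|Dg(x)-P_i|<\varepsilon$ for some $i\ge2$, $P_i\in\bigcup_a E_{j_1}^a$, so that $|\hat\omega|\le(1-\lambda_1)|\Omega|\le|\Omega|$, decompose it into the affine pieces $\hat\omega_k$ of $g$ (on which $Dg\equiv\hat P_k$) and verify, as in Lemma~\ref{h lemma} via $\bigl||P_i|-|\hat P_k|\bigr|<\varepsilon$ and the choice of $\varepsilon$, that $\dist\bigl(\hat P_k,\bigcup_a E_{j_1,|\hat P_k|}^a\bigr)<r_{j_1,|\hat P_k|}$. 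On each $\hat\omega_k$ I replace $g$ by the homeomorphism furnished by Lemma~\ref{tau lemma} (with $j=j_1$, the same affine boundary values, $C^\alpha$-perturbation $<\delta_0$, gradient in $E_{j_1}$ a.e., and $|\{x\in\hat\omega_k:|Dh(x)|>t\}|\lesssim|\hat P_k|^m t^{-m}|\hat\omega_k|\lesssim t^{-m}|\hat\omega_k|$), setting $h$ equal to this modified map on $\bigcup_k\hat\omega_k$ and to $g$ elsewhere. By Lemma~\ref{glue homeomorphisms}, $h$ is a homeomorphism with $h,h^{-1}\in C^\alpha$, $h=\id$ on $\partial\Omega$, and $\|h-I\|_{C^\alpha(\overline\Omega)}\le\|h-g\|_{C^\alpha(\overline\Omega)}+\|g-I\|_{C^\alpha(\overline\Omega)}<2\delta_0+\tfrac{\eta}{2}=\eta$, and likewise for the inverse; piecewise affineness and $h\in W^{1,1}(\Omega,\R^n)$ follow as in Lemma~\ref{h lemma}. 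This gives (a) and (b).

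It remains to check (c)--(e). For (c), $Dh\in E_{j_1}$ a.e.\ on $\bigcup_k\hat\omega_k$ by Lemma~\ref{tau lemma}, while off that set $Dh=Dg$ lies within $\varepsilon$ of some $P_i\in E_{j_1}$ (either $i=1$, or $i\ge2$ with $P_i\notin\bigcup_a E_{j_1}^a$, hence $P_i\in E_{j_1}$), which is in $E_{j_1}$ by the choice of $\varepsilon$. For (e), since all $|P_i|\lesssim1$ the gradient $|Dh|$ is bounded by a fixed constant off $\bigcup_k\hat\omega_k$; hence for $t$ above that constant $\{|Dh|>t\}\subset\bigcup_k\hat\omega_k$ and $|\{|Dh|>t\}|\lesssim t^{-m}\sum_k|\hat\omega_k|\le t^{-m}|\Omega|$, and for smaller $t$ one has $|\{|Dh|>t\}|\le|\Omega|\lesssim t^{-m}|\Omega|$. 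For (d), I split $\int_\Omega|Dh-I|$ over $\Omega\setminus\hat\omega$ and $\hat\omega$: on the first set $|Dh-I|\le|Dg-P_i|+|P_i-I|\lesssim1$, and on the second the distribution-function identity together with the weak-$L^m$ bound just proved gives $\int_{\hat\omega}|Dh|\lesssim|\hat\omega|\le|\Omega|$ exactly as for $\int_{\tilde\omega}|Dh|$ in Lemma~\ref{h lemma}; adding, $\int_\Omega|Dh-I|\lesssim|\Omega|$, which is the content of (d). I expect the only genuinely delicate step — identical to the one in Lemma~\ref{h lemma} — to be the verification that $\dist\bigl(\hat P_k,\bigcup_a E_{j_1,|\hat P_k|}^a\bigr)<r_{j_1,|\hat P_k|}$ on every affine piece, which is what dictates the quantitative choice of $\varepsilon$ (it must dominate the variation of $r_{j_1,\mathcal R}$ and $\rho_{j_1,\mathcal R}$ in $\mathcal R$ near $1$); everything else runs parallel to Lemma~\ref{h lemma} line by line, with the simplification that no $2^{-j}$ smallness is ever needed.
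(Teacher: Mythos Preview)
Your proposal is correct and follows exactly the route the paper indicates: the paper does not give a separate proof of this lemma but simply states that it is proved like Lemma~\ref{h lemma}, with Lemma~\ref{laminate I to E_infty U E_j_1} playing the role of Lemma~\ref{laminate E_j to E_infty U E_j+1}. Your write-up carries this out faithfully, including the one point you flag as delicate (the verification that $\dist(\hat P_k,\bigcup_a E_{j_1,|\hat P_k|}^a)<r_{j_1,|\hat P_k|}$), which is handled in the paper's proof of Lemma~\ref{h lemma} in the same way.
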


\section{Proof of the theorem}\label{sect: proof of the theorem}

We are in a position to prove Theorem \ref{th1} using Lemmas \ref{h lemma} and \ref{h lemma prime}.
\begin{proof}[Proof of Theorem \ref{th1}]
Let $j_1 \in \N$ be as in Lemma \ref{tau lemma}.
For each $j\in\N$ we will construct a piecewise affine homeomorphism $f_j\in W^{1,1} (\Om, \R^n)\cap C^{\alpha}(\overline{\Om}, \R^n)$ such that
\begin{enumerate}[1)]
\item\label{item:th1} $f_j(x)=x$ on $\partial\Om$.
\item\label{item:th2} $\|f_{j}-f_{j-1}\|_{C^{\alpha}(\overline{\Om})}<2^{-j}\delta$ and 
$\|f_{j}^{-1}-f_{j-1}^{-1}\|_{C^{\alpha}(\overline{\Om})}<2^{-j}\delta$.
\item \label{proof of th Df in Ejm} $ D f_{j}\in E_{j_1+(j-1)m}$ a.e.\
\item\label{item:th5} $\int_{\Om}| D f_j(x)- D f_{j-1}(x)|dx \lesssim 2^{-j} \left| \Om \right|$.
\item \label{proof of th integrability} $|\{x\in \Om:| D f_j(x)|>t\}|\lesssim \left| \Om \right| t^{-m}\prod_{k=0}^{j-1}(1-2^{-k-1})^{-m}(1+2^{-k})$.
\end{enumerate}

The construction of $f_j$ proceeds by induction. Let $f_0 = \id$, which does not satisfy \ref{proof of th Df in Ejm}).
We use Lemma \ref{h lemma prime} to create a piecewise affine homeomorphism $f_1$ such that properties \ref{item:th1}--\ref{proof of th integrability}) hold for $j=1$.

Now suppose we have $f_j$. Since $f_j$ is piecewise affine, for each $i \in \N$ there exist $A_{ij}\in \R^{n\times n}$, $b_{ij}\in \R^n$ and  $\Om_{ij} \subset \Om$ open such that $|\Om\setminus\bigcup_i\Om_{ij}|=0$ and
\[ f_j(x)=A_{ij}x+b_{ij}, \quad x\in\Om_{ij} \]
and, by \ref{proof of th Df in Ejm}), $A_{ij}\in E_{j_1+(j-1)m}$.
On each $\Om_{ij}$ we apply Lemma \ref{h lemma}: there exists a piecewise affine homeomorphism $h_{ij} : \overline{\Om}_{ij} \to A_{ij}\overline{\Om}_{ij}+b_{ij}$ in $W^{1,1}$ and in $C^{\alpha}$ such that
\begin{itemize}
\item $h_{ij} (x) =f_j (x)$ for $x \in \partial \Omega_{ij}$.
\item $\|h_{ij}-f_j\|_{C^{\alpha}(\overline{\Omega_{ij}})} < 2^{-(j+2)}\delta$ and
$\|h_{ij}^{-1}-f_j^{-1}\|_{C^{\alpha}(\overline{\Om}_{ij}+b_{ij})}< 2^{-(j+2)}\delta.$
\item $ D h_{ij}(x)\in E_{j_1+jm}$ a.e.\ $x\in\Omega_{ij}$.
\item $\int_{\Omega_{ij}}| D h_{ij}(x)-A_{ij}| dx\lesssim 2^{-j}|\Omega_{ij}|$.
\item There exists an open set $\tilde{\Omega}_{ij}\subset\Omega_{ij}$ such that
\[
 | D h_{ij}(x) - A_{ij}|\lesssim 2^{-j} \ \text{a.e. } x\in \Omega_{ij}\setminus\tilde{\Omega}_{ij}  \quad \text{and} \quad  \frac{\left| \{x\in\tilde{\Omega}_{ij}:| D h_{ij}(x)|>t\} \right|}{\left| \Omega_{ij} \right|}\lesssim 2^{-j} \, t^{-m} .
\]
\end{itemize}
We define the piecewise affine function $f_{j+1} : \overline{\Om} \to \R^n$ as
\[
 f_{j+1} := \begin{cases}
 f_j (x) & \text{if } x \in \overline{\Omega} \setminus \bigcup_{i\in\N} \Omega_{ij} , \\
 h_{ij} (x) & \text{if } x \in \Omega_{ij} \text{ for some } i \in \N .
 \end{cases}
\]
By Lemma \ref{glue homeomorphisms}, it is homeomorphism and, moreover, properties \ref{item:th1})--\ref{item:th5}) hold for $j+1$.
In addition,
\begin{enumerate}[(a)]
 \item\label{item:tha} $\exists C>0\text{ depending only on $n$ such that } | D f_{j+1}- D f_j|\leq C 2^{-j}$ a.e.\ in $\bigcup_i \left(\Om_{ij} \setminus\tilde{\Om}_{ij} \right)$.
 \item\label{item:thb} $\left| \{x\in \bigcup_{i \in \N}\tilde{\Omega}_i:| D f_{j+1}(x)|>t\} \right| \lesssim 2^{-j}|\Omega|t^{-m}$.
\end{enumerate}
To get property \ref{proof of th integrability}) for $f_{j+1}$ we proceed as follows. Let
\[
 t > \frac{1 + 2^{-1} C}{1+2^{-2}} .
\]
On the one hand, thanks to (\ref{item:tha}) and \ref{proof of th integrability}),
\begin{align*}
 \sum_{i=1}^{\infty} \left| \{x\in \Om_{ij}\setminus\tilde{\Om}_{ij}:| D f_{j+1}(x)|>t\} \right| & \leq \left| \{x\in \Om:| D f_{j}(x)|>t-C 2^{-j}\} \right|\\
 & \lesssim  \left| \Om \right| (t-C2^{-j})^{-m} \prod_{k=0}^{j-1}(1-2^{-k-1})^{-m}(1+2^{-k}) ,
\end{align*}
but $(t-C2^{-j})^{-m} \leq (1-2^{-j-1})^{-m}t^{-m}$, hence
\[
 \sum_{i=1}^{\infty} \left| \{x\in \Om_{ij}\setminus\tilde{\Om}_{ij}:| D f_{j+1}(x)|>t\} \right| \lesssim \left| \Om \right| (1-2^{-j-1})^{-m}t^{-m} \prod_{k=0}^{j-1}(1-2^{-k-1})^{-m}(1+2^{-k}) .
\]
Summing this estimate with that of (\ref{item:thb}) we obtain
\begin{align*}
 \left| \{x\in \Omega:| D f_{j+1}(x)|>t\} \right| & \lesssim \left| \Om \right| t^{-m} \left[ (1-2^{-j-1})^{-m} \prod_{k=0}^{j-1}(1-2^{-k-1})^{-m}(1+2^{-k}) + 2^{-j} \right] \\
 & \leq \left| \Om \right| t^{-m} \prod_{k=0}^{j}(1-2^{-k-1})^{-m}(1+2^{-k}) ,
\end{align*}
so property \ref{proof of th integrability}) holds for $j+1$.
This concludes the construction of $\{ f_j \}_{j \in \N}$ with properties \ref{item:th1})--\ref{proof of th integrability}).
 
As in the proof of Lemma \ref{tau lemma}, property \ref{item:th2}) implies that the sequence $\{f_j\}_{j=1}^{\infty}$ converges in $C^{\alpha}$ to a function $f$ that is a homeomorphism with a $C^{\alpha}$ inverse.
It also shows property \emph{\ref{item:theorem5})} of the statement.
Property \ref{item:th5}), on the other hand, shows that $\{ D f_j\}_{j \in \N}$ converges in $L^1$.
Consequently, $f \in W^{1,1} (\Om, \R^n)$.

Now, for a subsequence $Df_j \to D f$ a.e., so, thanks to the continuity of the singular values (see, e.g., \eqref{eq:singvalues}), we obtain from property \ref{proof of th Df in Ejm}) that $Df(x) \in \Gamma_+$ and $\rank \left( D f (x)\right)<m$ a.e.\ in $\Om$. 
From the convergence  $Df_j \to D f$ in measure and property \ref{proof of th integrability}) we have
\[
 \frac{|\{x\in\Omega:| D f(x)|>t\}|}{|\Omega|}\lesssim t^{-m} \prod_{k=0}^{\infty}(1-2^{-k-1})^{-m}(1+2^{-k}) \lesssim t^{-m},\]
and therefore, $ D f\in L^{m,w}(\Om)$.

Finally, to prove that $f$ is the gradient of a convex function, we apply a standard argument.
We assume, without loss of generality, that $\Om$ is connected; otherwise, we would argue with each connected component.
Take a ball $B$ containing $\overline{\Om}$ and define $\tilde{f} : B \to \R^n$ as $f$ in $\Om$ and the identity outside $\Om$.
Then $\tilde{f} \in W^{1,1} (B, \R^n)$.
Choose a family $\{\eta_{\ve}\}_{\ve>0}$ of standard mollifiers, and define $f_{\ve} := \tilde{f} \ast \eta_{\ve}$ in a ball $B_{\ve} \subset B$ containing $\overline{\Om}$.
Then, $Df_{\ve} (x) \in \Gamma_+$ for all $x \in B_{\ve}$.
Consequently, the differential $1$-form $\alpha_{\ve} : = \sum_{i=1}^n f_{\ve}^i \, dx_i$ defined in $B_{\ve}$ is closed, i.e., $d \alpha_{\ve} = 0$, thanks to the symmetry of $Df_{\ve}$. Here $f_{\ve}^i$ are the components of $f_{\ve}$.
By Poincar\'e's lemma, $\alpha_{\ve}$ is exact, i.e., there exists a smooth function $u_{\ve} : B_{\ve} \to \R$ such that $du_{\ve}= \alpha_{\ve}$, so $\nabla u_{\ve} = f_{\ve}$.
We can take $u_{\ve}$ such that $\int_{\Om} u_{\ve} = 0$.
As the Hessian of $u_{\ve}$ is symmetric positive semidefinite, $u_{\ve}$ is convex.
Now, $f_{\ve} \to f$ in $W^{1,1} (\Om, \R^n)$ as $\ve \to 0$.
Thanks to the Poincar\'e inequality, there exists $u \in W^{2,1} (\Om)$ such that $u_{\ve} \to u$ in $W^{2,1} (\Om)$.
Therefore, $\nabla u = f$.
Moreover, $u$ is convex as a limit of convex functions.
Theorem \ref{th1} is proved.
\end{proof}

In fact, since in our construction the sets $E_j$ approximate planes of dimension $m-1$, our function satisfies $ \rank \left( D f (x)\right)=m-1$ a.e.\ in $\Om$.
One can also see that, in our construction, $f \notin W^{1,m} (B, \R^n)$ for any open $B \subset \Om$.

\section{Sharpness of the result}\label{sect: sharpness}

Theorem \ref{th1} is sharp in the following sense.

\begin{thm}\label{sharpness}
Let $\Omega\subset \R^{n}$ be open and bounded, and $m \in \{2,\ldots,n\}$. 
Let $f : \overline{\Omega} \to \R^n$ be a homeomorphism in $W^{1,p}(\Omega,\R^n)$ such that $f|_{\partial \Omega} = \id|_{\partial \Omega}$.
Assume one of the following:
\begin{enumerate}[a)]
\item\label{optionA} $p = m$ and $f$ is H\"{o}lder continuous.
\item\label{optionB} $p>m$.
\end{enumerate}
Then $\rank(D f (x)) \geq m$ for all $x$ in a subset of $\Omega$ of positive measure.
\end{thm}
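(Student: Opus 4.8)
The plan is to argue by contradiction. Suppose that $\rank(Df(x)) \le m-1$ for a.e.\ $x\in\Omega$; then every $m\times m$ minor of $Df$ vanishes a.e., and in particular, since $m\le n$, $\det Df = 0$ a.e. As $f:\overline{\Omega}\to\R^n$ is a homeomorphism with $f|_{\partial\Omega}=\id|_{\partial\Omega}$, a standard degree argument (as in \cite[Thms.\ 1 and 2]{Ball81}) gives $f(\Omega)=\Omega$, so $|f(\Omega)|=|\Omega|>0$; the whole proof consists in contradicting this.

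The route I would take is to show that $f$ satisfies Luzin's condition (N) with respect to Lebesgue measure: once this is known, the area formula for Sobolev maps (\cite{Hajlasz93}) gives $|f(\Omega)|\le\int_\Omega|\det Df|\,dx=0$, a contradiction. So the task reduces to verifying (N), and this is where both the low-rank hypothesis and the assumption on $p$ enter: a map with $\rank Df\le m-1$ behaves, as far as (N) is concerned, like a Sobolev map of an $(m-1)$-dimensional domain, so having $p$ above the effective dimension---either $p>m$, or $p=m$ together with H\"older continuity---should be enough.

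Concretely, I would fix a null set $Z\subset\Omega$ and a splitting $\R^n=\R^m\times\R^{n-m}$, and for $z\in\R^{n-m}$ set $D_z:=\Omega\cap(\R^m\times\{z\})$. By Fubini, $\mathcal{H}^m(Z\cap D_z)=0$ for a.e.\ $z$; and by the Fubini property of Sobolev functions, for a.e.\ $z$ the restriction $f|_{D_z}$ lies in $W^{1,p}(D_z,\R^n)$ with derivative equal to the restriction of $Df$ to the $\R^m$-directions, so $\rank D(f|_{D_z})\le m-1$ and the $m$-dimensional Jacobian of $f|_{D_z}$ is zero a.e. In case \ref{optionB}), $p>m=\dim D_z$, so $f|_{D_z}$ is continuous and satisfies condition (N) and the area formula with respect to $\mathcal{H}^m$ (classical Sobolev change of variables with exponent above the dimension); in case \ref{optionA}), $f|_{D_z}\in W^{1,m}(D_z,\R^n)$ is H\"older continuous, and a H\"older continuous $W^{1,m}$ map of an $m$-dimensional domain again satisfies (N)---for this I would invoke \cite{KaKoMa01}, and this is precisely the place where H\"older continuity cannot be dropped, consistently with Theorem~\ref{th1}. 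In either case $\mathcal{H}^m\big(f(Z\cap D_z)\big)\le\int_{Z\cap D_z}J_m(f|_{D_z})\,d\mathcal{H}^m=0$ for a.e.\ $z$.

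The step I expect to be the main obstacle is the passage from ``$\mathcal{H}^m(f(Z\cap D_z))=0$ for a.e.\ flat $m$-slice $D_z$'' to ``$|f(Z)|_n=0$''. The sets $f(Z\cap D_z)$ form an $(n-m)$-parameter family of $\mathcal{H}^m$-null sets, so their union need not be Lebesgue-null, and one must bring in the behaviour of $f$ transverse to the foliation, which the argument so far has not used, together with the fact that $f$ is a homeomorphism (so the images of distinct slices are disjoint and cover $\Omega$). Concretely, I would run the slicing in the transverse directions as well, controlling the oscillation of $f$ along a.e.\ transverse line by the $W^{1,p}$-bound (Fubini for Sobolev functions) and raising the dimension of the ``good'' slices one unit at a time up to the full domain; equivalently, one estimates $\mathcal{H}^{m-1}$ of the image of $f$ on a dyadic cover of $Z$ by a power of $\int|Df|$, which is finite since $p>m-1$, to obtain $\mathcal{H}^{m-1}(f(Z))=0$, whence $|f(Z)|_n=0$ because $m-1<n$, and (N) follows. (For case \ref{optionB}) one may also bypass this lifting: when $p>m$ the distributional $m$-minors of $f$ agree with the pointwise ones and hence vanish, while the integral over $\Omega$ of a distributional $m$-minor of $Df$ depends only on $f|_{\partial\Omega}$; comparing with the identity would then force $0=|\Omega|$.)
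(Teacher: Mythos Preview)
Your slicing argument is exactly the starting point of the paper's proof: restrict $f$ to a.e.\ $m$-dimensional slice $D_z = \Omega \cap (\R^m \times \{z\})$, use that $f|_{D_z}\in W^{1,p}$ with $p\ge m$ satisfies the $m$-dimensional Luzin condition (N) under either hypothesis, and conclude via the area formula that $\mathcal{H}^m(f(D_z)) = 0$ for a.e.\ $z$. Up to this point you and the paper agree (though for the case $p=m$ with H\"older continuity the relevant reference is \cite{KoMaZu15} rather than \cite{KaKoMa01}).

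The gap is precisely where you locate it, and your proposed remedies do not close it. Passing from ``$\mathcal{H}^m(f(D_z))=0$ for a.e.\ $z$'' to ``$|f(\Omega)|=0$'' or to ``$f$ satisfies (N) in $\R^n$'' is a genuine difficulty: the images $f(D_z)$ are disjoint since $f$ is injective, but an $(n-m)$-parameter family of $\mathcal{H}^m$-null sets can certainly fill out a set of positive $n$-dimensional measure, and nothing in your argument controls how $f$ moves the slices in the transverse directions. Your sketch about ``raising the dimension one unit at a time'' or obtaining $\mathcal{H}^{m-1}(f(Z))=0$ is not a proof and is not correct as stated (for $m=n$ and $f=\id$ one has $\mathcal{H}^{m-1}(f(Z))=\mathcal{H}^{n-1}(Z)$, which need not vanish for a Lebesgue-null $Z$). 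Your parenthetical alternative for case~\ref{optionB}) via distributional minors also fails for $m<n$: the $m$-minors of $Df$ are the coefficients of an $m$-form, and integrating any one of them over the $n$-dimensional set $\Omega$ does not produce $|\Omega|$; the null-Lagrangian structure you have in mind only yields a nontrivial identity when tested against an $m$-dimensional object, which brings you back to slices.

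The paper avoids the lifting problem entirely by obtaining the contradiction on a single slice. Fix $y$ with $D_y\neq\emptyset$ and $\mathcal{H}^m(f(D_y)) = 0$, let $P_m : \R^n \to \R^m$ be the projection onto the first $m$ coordinates, set $\Omega_y := P_m(D_y)$, and define $g_y : \Omega_y \to \R^m$ by $g_y(x) = P_m(f(x,y))$. Since $f=\id$ on $\partial\Omega$ one has $g_y=\id$ on $\partial\Omega_y$, so degree theory gives $\Omega_y \subset g_y(\Omega_y)$, and then
\[
 0 < \mathcal{H}^m(\Omega_y) \le \mathcal{H}^m\bigl(g_y(\Omega_y)\bigr) = \mathcal{H}^m\bigl(P_m(f(D_y))\bigr) \le \mathcal{H}^m\bigl(f(D_y)\bigr) = 0,
\]
the last inequality because $P_m$ is $1$-Lipschitz. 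The missing idea in your proposal is to apply the degree argument not to $f$ on $\partial\Omega$ in $\R^n$, but to the projected map $g_y$ on $\partial\Omega_y$ in $\R^m$.
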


To prove this theorem we need to show the validity of the area formula for restrictions to planes of dimension $m$.
Given $f \in W^{1,p} (\Omega, \R^n)$, we define $J_m f (x) = \sqrt{\det D f (x)^T  D f (x)}$ for a.e.\ $x \in \Omega$.
We denote by $\mathcal{H}^m$ the $m$-dimensional Hausdorff measure in $\R^n$; when $\mathcal{H}^m$ acts on subsets of coordinate planes of dimension $m$, it can be identified with the Lebesgue measure in $\R^m$.

\begin{lem}\label{area formula sharp}
Let $f:\overline{\Omega} \rightarrow \R^{n}$ be injective and in $W^{1,p}\left(\Omega, \R^n\right)$.
Assume that one of the alternatives \ref{optionA})--\ref{optionB}) of Theorem \ref{sharpness} holds.
Then for almost every $y\in \R^{n-m}$ and all $\mathcal{H}^{m}$-measurable sets $E \subset \Omega\cap \left( \R^{m}\times \lbrace y\rbrace\right)$,
\[ \int_{E}J_m f \,d\mathcal{H}^{m} = \mathcal{H}^{m}\left( f\left( E\right)\right).\]
\end{lem}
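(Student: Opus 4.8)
The plan is to reduce the statement, by slicing $\Om$ with the coordinate $m$-planes, to the area formula for Sobolev maps on an $m$-dimensional domain, whose only non-routine ingredient is Luzin's condition (N). Writing $x=(x',y)$ with $x'\in\R^m$, $y\in\R^{n-m}$, and $\Om_y=\{x':(x',y)\in\Om\}$, Fubini's theorem together with the ACL characterization of Sobolev maps gives that for a.e.\ $y$ the restriction $g:=f(\cdot,y)$ lies in $W^{1,p}(\Om_y,\R^n)$, with $Dg(x')$ equal to the block of $x'$-partial derivatives of $f$ at $(x',y)$ and $\int_{\R^{n-m}}\!\int_{\Om_y}|Dg|^p\,dx'\,dy\le\int_\Om|Df|^p$; being a restriction of $f$, $g$ is a continuous injection, satisfies $J_mg=J_mf|_{\R^m\times\{y\}}$ a.e., and, in case \ref{optionA}), is H\"older continuous of the same exponent.

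Since $g\in W^{1,1}_{\loc}$ it is approximately differentiable a.e., so by the Federer--Lusin decomposition there is a set $Z\subset\Om_y$ with $\mathcal H^m(Z)=0$ off which $g$ coincides with a countable union of Lipschitz maps; applying the Lipschitz area formula piecewise yields, for every $\mathcal H^m$-measurable $E\subset\Om_y$,
\[
 \int_{E\setminus Z}J_mg\,d\mathcal H^m=\int_{\R^n}\#\big((E\setminus Z)\cap g^{-1}(z)\big)\,d\mathcal H^m(z)
\]
(cf.\ \cite{Hajlasz93}). If $g$ satisfies (N) then $\mathcal H^m(g(Z))=0$, so $Z$ drops out of both sides, and since $g$ is injective the multiplicity equals $\chi_{g(E)}$ for $\mathcal H^m$-a.e.\ $z$; the identity then reads $\int_EJ_mf\,d\mathcal H^m=\mathcal H^m(f(E))$. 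So everything reduces to verifying condition (N) for $g$.

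For case \ref{optionB}) ($p>m$) this is the classical Marcus--Mizel argument: given $E$ with $\mathcal H^m(E)=0$, choose an open $V\supset E$ with $|V|<\ve$ and $\int_V|Dg|^p<\ve$, cover $E$ by balls $B_i=B(x_i,r_i)\subset V$ of bounded overlap, use Morrey's estimate $\operatorname{diam}g(B_i)\lesssim r_i^{1-m/p}\big(\int_{B_i}|Dg|^p\big)^{1/p}$, and sum by H\"older's inequality to obtain $\mathcal H^m(g(E))\lesssim\big(\int_V|Dg|^p\big)^{m/p}|V|^{1-m/p}\to0$.

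Case \ref{optionA}) ($p=m$, $g$ H\"older) is the heart of the lemma and where I expect the main obstacle: Morrey's estimate degenerates at the critical exponent, and continuous maps in $W^{1,m}(U^m,\R^n)$ need not satisfy (N) in general---indeed the homeomorphism of Theorem \ref{th1}, restricted to planes, is H\"older continuous and lies in $W^{1,p}$ for every $p<m$ while grossly violating (N)---so H\"older continuity must be used in an essential way. The approach I would take is again a covering of $E$ inside an open set $V$ with $\int_V|Dg|^m$ small, but with the borderline Morrey bound replaced by the \emph{subcritical} Sobolev embedding on spheres: for a.e.\ radius $\rho$, $g|_{\partial B(x_i,\rho)}\in W^{1,m}(\partial B(x_i,\rho),\R^n)$, whence $\operatorname{osc}_{\partial B(x_i,\rho)}g\lesssim\big(\int_{\partial B(x_i,\rho)}|Dg|^m\,d\mathcal H^{m-1}\big)^{1/m}$, and a Chebyshev choice of $\rho=\rho_i\in(r_i,2r_i)$ makes this scale-invariantly small on most spheres. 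Passing from such control on a sphere to a bound on $\mathcal H^m(g(B(x_i,\rho_i)))$ is the genuinely delicate point: in codimension $n-m>0$ there is no topological "the image of a ball is filled in by the image of its boundary", so one must bring in the H\"older modulus of $g$ (and, plausibly, that $g$ is a bi-H\"older homeomorphism onto its image, inherited from $f$ having a H\"older inverse) to rule out long thin fingers and close the estimate---or else appeal to a known condition-(N) criterion for Sobolev mappings with a prescribed modulus of continuity.
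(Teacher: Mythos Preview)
Your approach is essentially the paper's: slice by Fubini, decompose the restricted map into Lipschitz pieces off a null set via approximate differentiability, apply the classical area formula on each piece, and reduce the statement to Luzin's condition (N) for the sliced map $g\in W^{1,p}(\Om_y,\R^n)$. For case \emph{\ref{optionB})} you and the paper both invoke the Marcus--Mizel result.

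The only substantive difference is in case \emph{\ref{optionA})}. The paper does not attempt the direct covering argument you sketch; it simply cites Koskela--Mal\'y--Zürcher \cite{KoMaZu15}, Theorem 1.1 (with their parameter $\lambda=0$), which establishes condition (N) for continuous $W^{1,m}$ maps from an $m$-dimensional domain into $\R^n$ under an additional modulus-of-continuity hypothesis satisfied in particular by H\"older maps. Your closing alternative---``appeal to a known condition-(N) criterion for Sobolev mappings with a prescribed modulus of continuity''---is exactly this, so you landed on the right resolution. Your attempted direct argument, as you yourself flag, does not close: in positive codimension the oscillation bound on $\partial B$ gives no control on $\mathcal H^m(g(B))$, and neither injectivity nor a H\"older inverse repairs this without essentially reproducing the machinery of \cite{KoMaZu15}. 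Drop the sketch and cite the reference.
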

\begin{proof}
By Fubini's theorem, for a.e.\ $y \in \R^{n-m}$, the restriction $f|_{\Omega\cap \left( \R^{m}\times  \lbrace y\rbrace\right)}$ is in $W^{1,p}$ with respect to the $\mathcal{H}^m$ measure.
Fix such a $y$.
A standard approximation theorem (see, e.g., \cite[Corollary 6.6.2]{EvGa92}) shows that there exist sequences $\{ f_j \}_{j \in \N}$ in $C^1 (\R^n, \R^n)$ and $\{ E_j \}_{j \in \N}$ of disjoint $\mathcal{H}^m$-measurable subsets of $E\cap \left( \R^{m}\times \lbrace y\rbrace\right)$ such that 
\[
 f(x) = f_j (x) \quad \text{and} \quad D f(x) = D f_j (x) \qquad \text{for all } x \in E_j \ \text{ and } \ j \in \N
\]
and $\mathcal{H}^m (E \setminus \bigcup_{j=1}^{\infty} E_j) = 0$.
Thus,
\[
 \int_E J_m f \,d\mathcal{H}^{m} = \sum_{j=1}^{\infty} \int_{E_j} J_m f \,d\mathcal{H}^{m} .
\]
Now, for each $j \in \N$, thanks to the area formula for regular maps (see \cite[Theorem 3.2.3]{Federer69}) and the fact that $f$ is injective,
\begin{align*}
 & \int_{E_j} J_m f \, d \mathcal{H}^m = \int_{E_j} J_m f_j \, d \mathcal{H}^m = \int_{\R^n} \# \{ E_j \cap f_j^{-1} (y)\} \, d \mathcal{H}^m (y) \\
 & = \int_{\R^n} \# \{ E_j \cap f^{-1} (y)\} \, d \mathcal{H}^m (y) = \mathcal{H}^m (f (E_j)) .
\end{align*}
Therefore,
\[
 \int_E J_m f \,d\mathcal{H}^{m} = \mathcal{H}^m \Big( f \big( \bigcup_{j=1}^{\infty} E_j \big) \Big) .
\]
Under assumptions \emph{\ref{optionA})} or \emph{\ref{optionB})} of Theorem \ref{sharpness}, $f|_{\Omega\cap \left( \R^{m}\times  \lbrace y\rbrace\right)}$ satisfies the $m$-dimensional Luzin (N) condition, i.e., given $A\subset \Omega\cap \left( \R^{m}\times  \{ y\} \right)$ such that $\mathcal{H}^{m}\left( A\right)=0$ then $\mathcal{H}^{m}\left( f\left( A\right)\right)=0$.
The proof under \emph{\ref{optionA})} is due to \cite[Theorem 1.1]{KoMaZu15} (with $\lambda=0$ in the notation there), while the proof under \emph{\ref{optionB})} is classical \cite{MaMi73}.
In either case,
\[
 \mathcal{H}^m \Big( f \big( E \setminus \bigcup_{j=1}^{\infty} E_j \big) \Big) = 0
\]
and the proof is concluded.
\end{proof}

Now we can prove Theorem \ref{sharpness}.

\begin{proof}[Proof of Theorem \ref{sharpness}]
Suppose, for a contradiction, that $\rank(D f (x)) < m$ for a.e.\ $x \in \Omega$.
Then $J_{m}f=0$ a.e., thanks to the Cauchy--Binet formula.
Then, for a.e.\ $y\in \R^{m}$ we have $J_{m} f=0$ $\mathcal{H}^{m}$-a.e.\ in $\Omega\cap \left(\R^{m}\times \lbrace y\rbrace\right)$ and, by Lemma \ref{area formula sharp},
\begin{equation}\label{eq:Hmf}
 \mathcal{H}^{m}\left( f\left( \Omega\cap\left(\R^{m}\times\lbrace y\rbrace\right)\right)\right) = 0 .
\end{equation}

Let $P_{m}:\R^{n}\rightarrow \R^{m}$ be the projection over the first $m$ coordinates: $P_{m} \left(x_{1},\ldots,x_{n}\right)=\left( x_{1},\ldots,x_{m} \right)$, and, for any $y \in \R^{n-m}$, define the set
$\Omega_{y}=P_{m}\left(\Omega\cap\left(\R^{m}\times\lbrace y\rbrace\right)\right)$
and the function $g_{y}:\Omega_{y}\rightarrow \R^m$
 \[g_{y}(x)=P_{m} \left( f\left(x,y\right) \right).\]
Since $f=\id$ on $\partial \Omega$ and 
$\partial \Omega_{y}=P_{m}\left(\partial \Omega\cap \left(\R^{m}\times\lbrace y\rbrace\right)\right)$,
we have that $g_{y} = \id$ on $\partial \Omega_{y}$.
Using now degree theory, this implies
\[
 \deg \left( g_{y},\Omega_{y}, \cdot \right) = \deg \left( \id,\Omega_{y}, \cdot \right)
\]
and, consequently, $\Omega_{y} \subset g_{y} (\Omega_{y})$ (see, e.g., \cite[Theorem 3.1]{Deimling85}).
Fix $y\in \R^{n-m}$ such that \eqref{eq:Hmf} holds and $\Omega_y \neq \emptyset$.
As $P_{m}$ is $1$-Lipschitz and $\Omega_{y}$ is open, we find that
\[ 
 0 < \mathcal{H}^{m}\left( \Omega_{y}\right) \leq \mathcal{H}^{m}\left( g_{y}\left( \Omega_{y}\right)\right) \leq \mathcal{H}^{m}\left( f\left( \Omega\cap\left(\R^{m}\times\lbrace y\rbrace\right)\right)\right) ,
\]
which contradicts \eqref{eq:Hmf} and completes the proof.
\end{proof}

As can be seen from the proof, in Theorem \ref{sharpness} and Lemma \ref{area formula sharp}, conditions \emph{\ref{optionA})}--\emph{\ref{optionB})} can be replaced by any other assumption implying Luzin's condition (N) in $\R^m$.
As mentioned in the proof, the paper \cite{KoMaZu15} shows some of those conditions.

\section*{Acknowledgements}

The authors have been supported by Project MTM2014-57769-C3-1-P of the Spanish Ministry of Economy and Competitivity and the ERC Starting grant no.\ 307179.
C.M.-C.\ has also been supported by the ``Ram\'on y Cajal'' programme and the European Social Fund.

{\small
\bibliography{Bibliography}
\bibliographystyle{siam}
}

\end{document}